\numberwithin{equation}{section}
\newtheorem{Theorem}{Theorem}[section]
\newtheorem{Lemma}[Theorem]{Lemma}
\newtheorem{Proposition}[Theorem]{Proposition}
\newtheorem{Assumption}{H.\!\!}
\theoremstyle{definition}
\newtheorem{Definition}{Definition}[section]
\newtheorem{Example}{Example}[section]
\theoremstyle{remark}
\newtheorem{Remark}{Remark}[section]
 \def\p{\partial} \def\nb{\nonumber}
\def\to{\rightarrow}
 \def\ol{\overline}    
\def\Om{\Omega}  \def\om{\omega} 
\newcommand{\q}{\quad}
\def\l{\label}  \def\f{\frac}  \def\fa{\forall}
\def\b{\beta}  \def\a{\alpha} 
\def\eps{\varepsilon}
 \def\t{\times}  
\def\ms{\medskip}
\def \la{\langle} \def\ra{\rangle}
\def\cB{\mathcal{B}}
\def\cF{\mathcal{F}}
\def\cH{\mathcal{H}}
\def\cS{\mathcal{S}}
\def\cV{\mathcal{V}}
\def\cY{\mathcal{Y}}
\def\d{{\mathrm{d}}}
\def\bA{{\textbf{A}}}
\def\sD{{\mathbb{D}}}
\def\sE{{\mathbb{E}}}
\def\sF{{\mathbb{F}}}
\def\sL{{\mathbb{L}}}
\def\sN{{\mathbb{N}}}
\def\sP{\mathbb{P}}
\def\sR{{\mathbb R}}
\newcommand{\tr}{\textnormal{tr}}
\newcommand{\prox}{\textnormal{prox}}
\DeclareMathOperator*{\argmax}{arg\,max}
\DeclareMathOperator*{\argmin}{arg\,min}
\newcommand{\lc}
{\mathrel{\raise2pt\hbox{${\mathop<\limits_{\raise1pt\hbox
{\mbox{$\sim$}}}}$}}}
\newcommand{\gc}
{\mathrel{\raise2pt\hbox{${\mathop>\limits_{\raise1pt\hbox{\mbox{$\sim$}}}}$}}}
\newcommand{\ec}
{\mathrel{\raise2pt\hbox{${\mathop=\limits_{\raise1pt\hbox{\mbox{$\sim$}}}}$}}}
\def\bb{\begin{equation}} \def\ee{\end{equation}}
\def\bbn{\begin{equation*}} \def\een{\end{equation*}}
\def\beqn{\begin{eqnarray}}  \def\eqn{\end{eqnarray}}
\def\beqnx{\begin{eqnarray*}} \def\eqnx{\end{eqnarray*}}
\def\bn{\begin{enumerate}} \def\en{\end{enumerate}}
\def\bd{\begin{description}} \def\ed{\end{description}}
\begin{document}

\title{Linear convergence of 
a policy gradient method for {
some}
finite horizon continuous time 
control problems
}

\author{
Christoph Reisinger\thanks{
Mathematical Institute, University of Oxford, Oxford OX2 6GG, UK
 ({\tt christoph.reisinger@maths.ox.ac.uk, 
wolfgang.stockinger@maths.ox.ac.uk})}
\and
Wolfgang Stockinger\footnotemark[1]
\and
Yufei Zhang
\thanks{Department of Statistics, London School of Economics and Political Science, Houghton Street, London, WC2A 2AE, UK
({\tt y.zhang389@lse.ac.uk})}
}
\date{}

\maketitle

\noindent\textbf{Abstract.} 
Despite its popularity in the reinforcement learning community, a provably convergent policy gradient method for 
continuous space-time 
control problems
with nonlinear state dynamics 
has been
elusive.
This paper 
 proposes proximal gradient algorithms for feedback controls of finite-time horizon stochastic control problems. The state dynamics are  
 nonlinear  diffusions with
 {
 control-affine} drift,
and the  cost functions are nonconvex in the state and nonsmooth in the control. 
{
The system noise can degenerate, which allows for   deterministic control problems as special cases.} 
We prove under suitable conditions that the algorithm converges linearly to a stationary point of the control problem, and is stable with respect to policy updates by approximate gradient steps. The  convergence result justifies the recent reinforcement learning heuristics that adding entropy regularization or 
{
a fictitious discount factor} to  the optimization objective accelerates the convergence of policy gradient methods.
 The proof  exploits careful regularity estimates of backward stochastic differential equations.

\medskip
\noindent
\textbf{Key words.} 
 reinforcement learning, 
 policy gradient method, stochastic control, 
 linear convergence,
 stationary point,
 backward stochastic differential equation

\ms
\noindent
\textbf{AMS subject classifications.} 
68Q25, 93E20, 49M05


\medskip


\section{Introduction}\l{sec:intro}

Stochastic control problems 
seek optimal strategies to control continuous time stochastic  systems and 
are ubiquitous in  modern science, engineering
and economics
\cite{kushner2001numerical, pham2009continuous}.
In most applications,
the agent aims to construct a  feedback control mapping states of the system to  optimal actions.
A feedback control 
  has the advantage that it allows for implementing an optimal control
  in real time
 through evaluating the   feedback map  at  observed system states.
 %
An effective approach to 
generate (nearly) optimal 
feedback controls for high-dimensional  control problems
is via
gradient-based 
algorithms 
(see e.g., \cite{munos2006policy,
han2016deep,
sutton2018reinforcement,
jia2021policy}).
These algorithms, often referred to as \textit{policy gradient methods} (PGMs) in 
the reinforcement learning community,
approximate a  policy (i.e., a  feedback control)   in  a parametric  form,
and  update the policy parameterization iteratively based on gradients of the control objective.

Despite the notable success of PGMs, a mathematical theory that guarantees the convergence of these algorithms
for general 
(continuous time) stochastic control problems has been
elusive. 
{
It is known that 
 the objective of a control problem  is typically nonconvex with respect to \emph{feedback controls},
even if 
all cost functions are convex in   
state and control variables;
see \cite[Proposition 2.4]{giegrich2022convergence}
for a concrete example with  deterministic linear state dynamics
and strongly convex quadratic   costs. 
}%
This lack of convexity creates an essential challenge
in analysing the convergence behavior of PGMs.
Most  existing theoretical results of PGMs,
 especially those establishing (optimal) linear convergence,
focus on discrete  time  problems 
and restrict  policies 
within specific parametric families.
This includes 
 Markov decision problems (MDPs) with
softmax parameterized policies \cite{mei2020global}
or 
overparametrized one-hidden-layer neural-network policies \cite{wang2019neural,gu2021mean, kerimkulov2022convergence},
and discrete  time linear-quadratic (LQ) control problems with linear parameterized policies \cite{fazel2018global, hambly2021policy}.
The analysis therein exploits heavily   the specific structure of the considered
(discrete  time) control problems and policy parameterization,
and hence is difficult to extend to general continuous  time control problems or  general policy parameterizations. 
This leads to the following natural question:

\ms
\emph{Can one design  provably convergent gradient-based algorithms
for feedback controls of  continuous  time 
{
 nonlinear control problems},
without requiring specific policy parameterization?
} 
\ms



Analyzing PGMs in the continuous space-time  setting 
avoids 
discretization
artifacts and 
yields algorithms whose convergence behavior is robust with respect to time and space mesh sizes  \cite{tallec2019making}. Similarly, analyzing gradient-descent algorithms without specific policy parametrization 
avoids searching for controls in a suboptimal class.
This approach also
highlights the essential structures of the control problem that affect the algorithmic performance, which  subsequently provides a basis for  developing improved algorithms with more effective policy parameterizations (see Remark
\ref{rml:convergence_condition}).

 {
This work 
takes an initial step  towards answering the  above challenging   question,
and designs a convergent PGM  
for certain     control problems
with uncontrolled diffusion coefficients {
and affine control of the drift}.} 
Let 
 $T\in (0,\infty)$ be a given terminal time,
$(\Om,\cF,\sP)$ be a complete  probability space
on which a  $d$-dimensional Brownian motion $W=(W_t)_{t\in [0,T]}$
is defined,
 $\sF $ be the natural filtration of $W$
 augmented with an independent $\sigma$-algebra $\cF_0$,
and 
 $\mathcal{H}^2(\sR^k)$ be the set of $\sR^k$-valued 
 square integrable 
$\sF$-progressively measurable processes
$\a=(\a_t)_{ t\in [0,T]}$.
For any  initial 
state $\xi_0\in L^2(\cF_0;\sR^n)$
and any  $\a \in \mathcal{H}^2(\sR^k)$,
  consider 
the following controlled dynamics:
\bb\label{eq:control_fwd}
\d X_t=
b_t(X_t,\a_t)\, \d t
+\sigma_t(X_t)\, \d W_t,
\;  t \in [0,T],
\quad 
X_0=\xi_0,
\ee
where $b:[0,T] \times \sR^n \t \sR^k \to \sR^n$ and $\sigma:[0,T] \times \sR^n  \to \sR^{n \t d}$ are  differentiable functions
such that 
 \eqref{eq:control_fwd} admits a unique 
strong solution $X^{\xi_0,\a}$.
The agent's objective   is to minimize the following cost functional
\bb\label{eq:control_value}
J(\a;\xi_0)=\sE\bigg[
\int_0^T {
e^{-\rho t}} \left( f_t(X^{\xi_0,\a}_t,\a_t) + \ell(\a_t) \right) \, \d t+  {
e^{-\rho T}}  g(X^{\xi_0,\a}_T)
\bigg]
\ee 
over all admissible 
controls
$\a\in \cH^2(\sR^k)$,
where {
$\rho \geq 0$ is a given discount factor}\footnotemark,
\footnotetext{We specify the explicit dependence 
on $\rho$, as it impacts the convergence rate of PGMs.}
 $f:[0,T]\t \sR^n\t \sR^k\to \sR$ and $g:\sR^n\to \sR$ are differentiable  functions,
 and 
 $\ell:\sR^k \to \sR\cup\{\infty\}$ is a
 (possibly nondifferentiable) convex function.

{
 The precise conditions
on the  coefficients in 
\eqref{eq:control_fwd}-\eqref{eq:control_value}
will be given 
in Section \ref{sec:main1}.
 In particular, 
 we require the drift coefficient to be    affine in the control,
but allow both drift and diffusion coefficients to be nonlinear in the state.
The diffusion coefficient
can    degenerate, and hence 
  \eqref{eq:control_fwd} 
 includes   as a special case 
the  deterministic 
\emph{control-affine} system in nonlinear control theory \cite{isidori1985nonlinear}.  
We allow the cost functions $f$ and $g$ to be   nonconvex in the state,
but require  the running cost $f+\ell$ to be
   strongly convex in the   control.
The function 
 $\ell$ can be  discontinuous 
and can take the value infinity,
which are important characteristics
of control problems with 
control constraints
and  entropy regularizations;
see Examples 
\ref{example:constraints}, 
\ref{example:sparse} and 
\ref{example:entropy} for details.
Note that    these structural conditions in general do not imply convexity of  the control objective  $J$
in either the open-loop or feedback controls.

%

}

\paragraph{Proximal PGMs for  the control problem 
\eqref{eq:control_fwd}-\eqref{eq:control_value}.}

By interpreting 
\eqref{eq:control_fwd}-\eqref{eq:control_value}
as an  optimization problem over  $\cH^2(\sR^k)$, one can  
design a 
 gradient-descent  algorithm for \emph{open-loop} controls of the  problem.
Let 
$H^{\textrm{re}}:[0,T] \t \sR^n \t \sR^k \t \sR^n \to \sR$ be 
defined by 
\begin{align}
\label{eq:Hamiltonian_re}
H_t^{\textrm{re}}(x,a,y)\coloneqq \left \langle b_t(x,a),y \right \rangle +  f_t(x,a) - {
\rho \left \langle x,y \right \rangle }, 
\quad \forall (t,x,a,y)\in [0,T]\t \sR^n\t \sR^k\t \sR^n,
\end{align}
and let 
 $H:[0,T] \t \sR^n \t \sR^k \t \sR^n \t \sR^{n \t d} \to \sR$ 
 be the  Hamiltonian
 defined by 
\begin{align}
\label{eq:Hamiltonian}
H_t(x,a,y,z)\coloneqq H_t^{\textrm{re}}(x,a,y) + \left \langle \sigma_t(x), z \right \rangle,
\quad \forall (t,x,a,y,z)\in [0,T]\t \sR^n\t \sR^k\t \sR^n\t \sR^{n\t d}.
\end{align} 
{
Note that $H^{\textrm{re}}$ and $H$ only involve 
the differentiable component of the running cost, 
while the nonsmooth component $\ell$
  will be handled separately  by 
the proximal map  defined in \eqref{eq:proximal}.} 
Then for an 
initial guess $\alpha^0 \in \mathcal{H}^2(\mathbb{R}^k)$ and a stepsize $\tau >0$,   consider the  sequence $(\alpha^m)_{m \in \mathbb{N}}\subset \cH^2(\sR^k)$ such that for all  $m\in \sN_0\coloneqq \sN\cup\{0\}$,
\begin{equation}
\label{eq:open_loop_pgm}
    \alpha^{m+1}_t  = \prox_{\tau\ell} \big( \alpha^m_t - \tau {
    \partial_{a}} H_t^{\textrm{re}}(X^{\xi_0,\alpha^m}_t,\alpha^m_t,Y^{\xi_0,\alpha^m}_t)), \quad 
    \textnormal{for 
    $\mathrm{d}t \otimes \mathrm{d} \mathbb{P}$ a.e.,
    }
\end{equation}
where 
$(X^{\xi_0,\alpha^m},Y^{\xi_0,\alpha^m},Z^{\xi_0,\alpha^m})$ are adapted processes satisfying  the following forward-backward stochastic differential equation (FBSDE): for all $t \in [0,T]$,
\begin{alignat}{2}
\d X^{\xi_0,\alpha^m}_t
&= b_t(X^{\xi_0,\alpha^m}_t,\alpha^m_t)\, \d t +\sigma_t(X^{\xi_0,\alpha^m}_t)\, \d W_t, 
&&\q
 X_0^{\xi_0,\alpha^m} = \xi_0,
 \label{sde_open}
\\
\mathrm{d}Y^{\xi_0,\alpha^m}_t
&= - \partial_x H_t(X^{\xi_0,\alpha^m}_t,\alpha^m_t,Y^{\xi_0,\alpha^m}_t,Z^{\xi_0,\alpha^m}_t) \, \mathrm{d}t + Z^{\xi_0,\alpha^m}_t \, \mathrm{d}W_t,
&&
\q Y^{\xi_0,\alpha^m}_T = \partial_x g(X^{\xi_0,\alpha^m}_T),
\label{bsde_open}
\end{alignat}
and
$\prox_{\tau\ell}:\sR^k\to \sR^k$ is the  proximal map of $\tau\ell$ defined by
  \bb\label{eq:proximal}
  \textrm{prox}_{\tau\ell}(a)=\arg\min_{ {
  p} \in \sR^k}\left(\frac{1}{2}|{
  p}-a|^2+\tau\ell({
  p})\right),
  \quad \forall a\in \sR^k.
  \ee
  {
Note that
  \eqref{bsde_open} involves  undiscounted costs and
  the term $\rho Y^{\xi_0,\alpha^m}_t$,
  and arises from stochastic maximum principle for the discounted problem (see \cite{maslowski2014sufficient}).
  }
  
The iteration \eqref{eq:open_loop_pgm}
is  a proximal gradient method 
for \eqref{eq:control_value}.
The term
${
 \partial_{a}} H_t^{\textrm{re}}(X^{\xi_0,\alpha^m}_t,\alpha^m_t,Y^{\xi_0,\alpha^m}_t)$
is related to 
{
(up to an exponential time scaling)} the  Fr\'{e}chet derivative of the differentiable component of  $J(\cdot;\xi_0)$ at the  iterate $\a^m$,
while the function $\prox_{\tau \ell}$
can be identified as the proximal map of the nonsmooth component of  $J(\cdot;\xi_0)$
(see the proof of Theorem \ref{thm:convergence_stationary}).
We refer the reader to 
\cite{reisinger2021fast} for a detailed derivation of the algorithm and
to 
\cite{kerimkulov2021modified,vsivska2020gradient} for similar gradient-based algorithms without the nonsmooth term $\ell$.

The main drawback of the proximal gradient  algorithm  \eqref{eq:open_loop_pgm} 
(as well as the algorithms in 
\cite{kerimkulov2021modified,vsivska2020gradient})
is that it  iterates over open-loop controls. 
As
for each $m\in \sN$,
the iterate $\a^m\in \cH^2(\sR^k)$ is 
 a stochastic process  depending on the initial information and  
 the driving Brownian noise terms from previous iterates,
 the iteration \eqref{eq:open_loop_pgm} 
   is difficult to implement
 in practice. 
 In the sequel, we overcome the  shortcoming 
 of \eqref{eq:open_loop_pgm} 
 and introduce an analogue
 proximal gradient method
 for feedback controls of \eqref{eq:control_fwd}-\eqref{eq:control_value},
 which is referred to as 
   the \emph{proximal policy gradient method} (PPGM). 
 
 To this end, 
 we consider a class  $\cV_\bA$
 of Lipschitz continuous policies $\phi:[0,T]\t \sR^n\to \sR^k$, whose precise definition is given in  Definition \ref{def:fb}.
 For a given initial guess
  $\phi^0\in \cV_\bA$
  and a stepsize  $\tau>0$, 
the PPGM generates  the sequence  $(\phi^m)_{m\in \sN}\subset\cV_\bA$
  such that for all $m\in \sN_0$, 
\begin{align}
\label{eq:phi_update}
 \phi_t^{m+1}(x)&= \prox_{\tau\ell} \big(\phi_t^m(x)-\tau {
  \partial_{a}} H_t^{\textrm{re}}(x,\phi_t^m(x),Y^{t,x,\phi^m}_t) \big),
 \q 
 \fa (t,x)\in [0,T]\t \sR^n,
\end{align}
where
$\prox_{\tau \ell}$ is defined  in \eqref{eq:proximal},
and for each 
 $\phi\in \cV_\bA$ and 
 $(t,x)\in [0,T]\t \sR^n$,
$(X^{t,x,\phi},Y^{t,x,\phi},Z^{t,x,\phi})$
are adapted processes satisfying  the  FBSDE:
for all $s\in [t,T]$,
\begin{alignat}{2}
\d X^{t,x,\phi}_s
&= b_s(X^{t,x,\phi}_s,\phi_s(X_s^{t,x,\phi}))\, \d s +\sigma_s(X_s^{t,x,\phi})\, \d W_s, 
&&\q
 X_t^{t,x,\phi} = x,
 \label{sde_feedback}
\\
\mathrm{d}Y^{t,x,\phi}_s
&= - \partial_x H_s(X^{t,x,\phi}_s, \phi_s(X^{t,x,\phi}_s),Y^{t,x,\phi}_s,Z^{t,x,\phi}_s) \, \mathrm{d}s + Z^{t,x,\phi}_s \, \mathrm{d}W_s,
&&
\q Y^{t,x,\phi}_T = \partial_x g(X^{t,x,\phi}_T).
\label{bsde_feedback}
\end{alignat}

The  iteration \eqref{eq:phi_update} 
is motivated by the observation that 
if $\a^{\phi^m}_t=\phi^m_t(X^{\xi_0,\phi^m}_t)$ with $X^{\xi_0,\phi^m}$ being the state process controlled by the policy $\phi^m$,
then 
for $\mathrm{d}t \otimes \mathrm{d} \mathbb{P}$ a.e., 
$Y^{\xi_0,\a^m}_t=Y^{t,x,\phi^m}_t|_{x=X^{\xi_0,\phi^m}_t}$ and 
{
$$
 \partial_{a} H_t^{\textrm{re}}(x,\phi_t^{m}(x),Y_t^{t,x,\phi^{m}}) \big\vert_{x = X_t^{\xi_0,\phi^{m}}}
=e^{\rho t} \left(\nabla_\alpha J_{\rm diff}(\alpha; \xi_0)\big|_{\alpha=\alpha^{\phi^{m}}}\right)_t,
$$ 
where $J_{\rm diff}(\cdot; \xi_0)$ is the differentiable component of $J(\cdot; \xi_0)$ in \eqref{eq:control_value}:
for all $\a\in \cH^2(\sR^k)$,
\begin{align*}
\begin{split}
J_{\rm diff}(\alpha; \xi_0) \coloneqq 
\mathbb{E} \bigg[ \int_0^T e^{-\rho t}f_s(X^{\xi_0,\a}_s,\a_s)  \, \d s+  {
e^{-\rho T}}  g(X^{\xi_0,\a}_T) \bigg],
\end{split}
\end{align*}
and 
$\nabla_\alpha J_{\rm diff}(\alpha; \xi_0)\in \cH^2(\sR^k)$  is the     Fr\'{e}chet derivative of 
$J_{\rm diff}$ at $\alpha$.
}%
In other words, at the $m$-th iteration, \eqref{eq:phi_update} evaluates the functional derivative of $J_{\rm diff}(\cdot;\xi_0)$ at the  open-loop control
$\a^{\phi^m}$
 induced by the current policy $\phi^m$,
and obtains the update direction  based on a Markovian representation of the gradient. 
{
This choice of gradient directions is crucial for the well-posedness and convergence of the policy iterates 
$(\phi^{m})_{m\in \sN_0}$ in \eqref{eq:phi_update};
see the end of  Section \ref{sec:main2} for a   detailed comparison
between the  proposed gradient and the vanilla 
gradient direction of $J_{\rm diff}$ over feedback controls.
}

The PPGM 
\eqref{eq:phi_update}
improves the efficiency of 
the policy iteration (see
\cite{kerimkulov2020exponential,
ito2021neural})
and the Method of Successive Approximation
(see \cite{li2018maximum,kerimkulov2021modified}) by
avoiding 
a pointwise minimization of  the Hamiltonian  over the action space,
which may be  expensive, especially in a high-dimensional setting.
It has been successfully applied to  high-dimensional 
control problems
 in 
\cite{reisinger2021fast} 
by solving
the  linear BSDE 
\eqref{bsde_feedback} numerically;
see e.g.,   \cite{hure2020deep} and \cite{reisinger2021fast}  and references therein 
for various numerical schemes.

\paragraph{Our contributions.}

This paper  identifies  conditions under which  the PPGM \eqref{eq:phi_update} converges linearly to a stationary point of \eqref{eq:control_fwd}-\eqref{eq:control_value}. These conditions allow for nonlinear state dynamics with   degenerate noise,  unbounded action space
and 
    unbounded
    cost functions that are 
     {
     nonconvex in   state and 
     involve a nonsmooth regulariser in control}. 
     To the best  of  our knowledge, this is the  first work 
     which proposes a linearly convergent PGM  for a
     continuous time finite horizon control problem.
    The convergence result theoretically underpins experimental observations where  recent reinforcement learning heuristics, including entropy regularization or fictitious discount factor, 
    accelerate the convergence of PGMs.

We further prove  that  the PPGM \eqref{eq:phi_update} remains linearly convergent even if
the FBSDEs are solved only approximately and 
the policies are updated based on these approximate gradients. 
This stability result
 allows for computationally efficient algorithms as it shows that
 it is sufficient to solve the linear BSDEs  with low accuracy  at 
the initial iterations, while
an accurate BSDE solver is only required for the last few iterations; a similar strategy has been used  to design approximate policy iteration algorithms in  \cite{ito2021neural}.

\paragraph{Our approach and related works.}

There are various reasons for the relatively slow theoretical progress in PGMs for continuous time stochastic control problems. 
 Due to the nonconvexity of most objective functions of control problems  with respect to  the policies,
 establishing linear convergence of PGMs  can be linked to analyzing
 nonasymptotic 
performance of 
gradient search for nonconvex objectives,
which has always been one of the formidable challenges in  optimization theory.
Allowing   nonparametric policies in the algorithm
  further compounds the complexity,  
  as  the  analysis
  has to be carried out
  in a suitable function space, instead of in a  finite-dimensional parameter space.
  
Due to these technical difficulties, most existing works on 
linear performance guarantees of PGMs  concentrate on discrete time control problems with specific policy parametrization.
The arguments therein  often require specific problem structure,
in order to 
derive a suitable 
Polyak-{\L}ojasiewicz inequality
(also known as the gradient dominance property) for the loss landscape.
 For instance, in the tabular MDP  setting, the policies must be uniformly lower bounded away from zero over the entire state space \cite{mei2020global},
 while in the LQ setting,  eigenvalues of  state covariance matrices must be lower bounded away from zero over the entire time horizon \cite{fazel2018global,hambly2021policy}. 
 Consequently, these analyses 
  are difficult to extend to general control problems (such as  those with deterministic initial condition and degenerate noise)
 or to more sophisticated policy parameterizations (such as deep neural networks).

 Here, we 
 introduce a new analytical technique to analyse  the PPGM \eqref{eq:phi_update},
 without  relying on the Polyak-{\L}ojasiewicz condition
 or convexity.
 By carrying out a precise regularity estimate of  associated FBSDEs, 
  we establish uniform Lipschitz continuity 
  and uniform linear growth 
  of the iterates  $(\phi^m)_{m\in \sN}$.
  These estimates further 
  allow us to prove that 
  $(\phi^m)_{m\in \sN}$
  forms a contraction in a weighted sup-norm,
  whose limit can be identified as a stationary point of \eqref{eq:control_value}.
To the best of our knowledge,  this is
the first time BSDEs have been used to study convergence of PGMs.

\paragraph{Notation.}
For each 
  Euclidean space $(E,|\cdot|)$,
we introduce the following spaces: 
 \begin{itemize}[leftmargin=*,noitemsep,topsep=0pt]
\item
 $\cS^p(t,T;E)$, for $t \in [0,T]$ and $p\ge 2$,
  is the space of 
 $E$-valued
 $\sF$-progressively  measurable 
processes
$Y:  [t,T]\t \Om\to E$ 
satisfying $\|Y\|_{\cS^p}=\sE[\sup_{s\in [t,T]}|Y_s|^p]^{1/p}<\infty$;
\footnotemark
\footnotetext{With a slight abuse of notation, we denote by $\sup$ the essential supremum of a real-valued (Borel) measurable function.}
\item
 $\cH^p(t,T;E)$, for $t \in [0,T]$
 and $p\ge 2$, is the space of 
   $E$-valued $\sF$-progressively measurable
 processes 
$Z:  [t,T]\t \Om \to E$ 
 satisfying $\|Z\|_{\cH^p}=\sE[(\int_t^T|Z_s|^2\,\d s)^{p/2}]^{1/p}<\infty$.
\end{itemize}
For notational simplicity, 
we denote
$\cS^p(E)=\cS^p(0,T;E)$ and  $\cH^p(E)=\cH^p(0,T;E)$.

\section{{  
Main results}}\label{sec:main}


This section    summarises  the model assumptions
and presents the  main results on the 
 linear convergence of the PPGM \eqref{eq:phi_update}.

\subsection{Standing assumptions}\label{sec:main1}

The following assumptions  on the  coefficients of \eqref{eq:control_fwd}-\eqref{eq:control_value} are imposed 
throughout the paper.

\begin{Assumption}\label{assum:pgm}
Let $T>0$, $\xi_0\in L^2(\cF_0;\sR^n)$,
$\ell:\sR^k\to \sR\cup\{\infty\}$,  
$f:[0,T]\t \sR^n \t \sR^k \to\sR$,
 $g:\sR^n \to\sR$,
$b:[0,T]\t \sR^n \t \sR^k \to \sR^n$,
and $\sigma:[0,T]\t \sR^n \to\sR^{n\t d}$
be measurable functions  such that:
\begin{enumerate}[(1)]
\item \l{item:l}
$\ell$ is  lower semicontinuous   and its
effective domain 
$\bA\coloneqq \{z\in \sR^k
\!\mid\! \ell(z)<\infty\}$
is nonempty;
\footnotemark
\footnotetext{We say a function $f:X\to \sR\cup\{\infty\}$ is proper if it has a nonempty effective domain 
$\operatorname{dom}f\coloneqq \{x\in X\mid f(x)<\infty\}$.}

\item \l{item:f}
for all 
$t\in [0,T]$,
$\sR^n\t \sR^k\ni (x,a)\mapsto f_t(x,a)\in \sR$
is  continuously differentiable, $|f_t(0,0)|<\infty$, and 
 there exist constants
$C_{fx}, C_{fa}, L_{fx},L_{fa}\ge 0$ such that 
for all 
$t\in [0,T]$, $(x,a),(x',a')\in 
\sR^{n} \t \bA$,
\begin{align}
&
|\p_x f_t(x,a)|\le C_{fx},
\q 
|\p_x f_t(x,a)-\p_x f_t(x',a')|
\le
L_{fx}(|x-x'|+|a-a'|),
\l{eq:f_x_lipschitz}
\\
&
|\p_a f_t(0,0)|\le C_{fa},
\q 
|\p_a f_t(x,a)-\p_a f_t(x',a')|
\le
L_{fa}(|x-x'|+|a-a'|);
\l{eq:f_a_lipschitz}
\end{align}

\item \l{item:strong_convex}
 there exist constants
 $\mu,\nu\ge 0$ such that 
 $\mu+\nu>0$ and 
for all 
$(t,x)\in [0,T] \t \sR^n$, $a,a'\in 
 \bA
$
and $\eta\in [0,1]$,
\begin{align}
\eta f_t(x,a)+(1-\eta ) f_t(x,a')
&\ge 
f_t(x, \eta a+(1-\eta) a')
+ \eta(1-\eta)\tfrac{\mu}{2}|a-a'|^2,
\l{eq:strong_convex_f}
\\
\eta \ell(a)+(1-\eta ) \ell(a')
&\ge 
\ell( \eta a+(1-\eta) a')
+ \eta(1-\eta)\tfrac{\nu}{2}|a-a'|^2;
 \l{eq:strong_convex_l}
\end{align}
\item \l{item:g}
$g$ is  differentiable
and there exist constants $C_g, L_g\ge 0$
such that for all $x,x' \in \sR^n$,
\bb\label{eq:g_bound}
|\p_x g(x)|\le C_g, \q 
|\p_x g(x)-\p_x g(x')|\le L_g|x-x'|;
\ee

\item \l{item:drift}
there exist 
$\hat{b} :[0,T] \t \sR^n\to \sR^n$, $\bar{b} :[0,T] \t \sR^n\to \sR^{n\t k}$ such that
\bb\label{eq:affine}
b_t(x,a) = \hat{b}_t(x) + \bar{b}_t(x)a, 
\q \forall (t,x,a)\in [0,T]\t\sR^n\t \sR^k,
\ee
with
$\sR^n\ni x\mapsto ( \hat{b}_t(x),\bar{b}_t(x))\in \sR^n \t \sR^{n\t k}$ differentiable for all $t\in [0,T]$,
and 
 there exist constants $C_{\hat{b}}, C_{\bar{b}},L_{\hat{b}},L_{\bar{b}}\ge 0$ and
$\kappa_{\hat{b}}\in \sR$
such that
for all $t\in [0,T]$, $(x,a), (x',a') \in \sR^n \t \bA$,
\begin{align}
&
|\hat{b}_t(0)|
+|\p_x\hat{b}_t(0)|
\le C_{\hat{b}},
\q
|\bar{b}_t(x)|\le C_{\bar{b}},
\label{eq:b_bound}
\\
&
  \langle x-x', \hat{b}_t(x) - \hat{b}_t(x')  \rangle \leq \kappa_{\hat{b}} |x-x'|^2, 
 \q
|\p_x\hat{b}_t(x)-\p_x\hat{b}_t(x')|
\le L_{\hat{b}}|x-x'|,
\label{eq:hat_b_lipschitz}
\\
&
|\bar{b}_t(x)-\bar{b}_t(x')|
+
|\bar{b}_t(x)a-\bar{b}_t(x')a'|
+|\p_x\bar{b}_t(x)a-\p_x\bar{b}_t(x')a'|
\le L_{\bar{b}}(|x-x'|+|a-a'|);
\label{eq:bar_b_lipschitz}
\end{align}  
\item \l{item:diff}
there exist constants $C_{{\sigma}}, L_{{\sigma}}\ge 0$
such that
for all $t\in [0,T]$, $x,x'\in  \sR^n$,
\begin{equation}
\label{eq:sigma}
|\sigma_t(x)|\le C_\sigma,
\q 
|\sigma_t(x)-\sigma_t(x')|
+|\p_x\sigma_t(x)-\p_x\sigma_t(x')|
\le L_\sigma|x-x'|.
\end{equation}

\end{enumerate}
\end{Assumption}

\begin{Remark}\label{rmk:assum}

The action set $\bA$ may be unbounded,
and hence
 \eqref{eq:bar_b_lipschitz} cannot be further simplified. 
 If one assumes further that $\bA$ is   bounded, 
 then, 
  by the boundedness of $\bar{b}$ in \eqref{eq:b_bound}, 
  \eqref{eq:bar_b_lipschitz}
is equivalent to the Lipschitz continuity of $\bar{b}$ and $\p_x\bar{b}$. Alternatively, if  $\bA$  is unbounded, 
then 
 \eqref{eq:bar_b_lipschitz} is equivalent to the condition that  $\bar{b}$ is independent in $x$.

To consider nonlinear state-dependent drift and diffusion coefficients,  we impose
in
 \eqref{eq:f_x_lipschitz} and \eqref{eq:g_bound}
 the boundedness conditions  on the 
spatial partial derivatives of 
cost functions.
 Observe from 
 \eqref{eq:Hamiltonian} 
 and \eqref{eq:affine}
 that 
 $\p_x H$ 
 (resp.~$\p_a H$)
 involve the term $(\p_x \hat{b}_t(x)+\p_x \bar{b}_t(x)a)^\top y+\p_x \sigma(x)^\top z$
(resp.~$\bar{b}_t(x)^\top y$),
 whose modulus of continuity in  $x$
 depends on the magnitude of $y$ and $z$.
 By exploiting the boundedness of $\p_x f$ and $\p_x g$, we establish an a-priori bound of the adjoint processes, and subsequently 
prove the iterative scheme \eqref{eq:phi_update}
generates Lipschitz continuous policies 
 $(\phi^m)_{m\in \sN_0}$ (see Proposition \ref{prop:Y_stability}).
 If the drift and diffusion coefficients
 are affine in $x$,
 then   \eqref{eq:f_x_lipschitz} and \eqref{eq:g_bound}
 can be relaxed to 
 quadratically growing  functions,
 which include  as special cases the linear-convex control problems  
studied in 
 \cite{guo2021reinforcement,szpruch2021exploration}.

 For clarity, 
 \eqref{eq:strong_convex_f} and  \eqref{eq:strong_convex_l} assume  convexity of   $a\mapsto f_t(x,a)$ and $a\mapsto \ell(a)$
 and  strong convexity of  
 $a\mapsto f_t(x,a)+\ell (a)$.
 This allows for characterizing  the  rate of convergence 
 of $(\phi^m)_{m\in\sN_0}$ in terms of $\mu$ and $\nu$. 
 Similar analysis can be performed if
  \eqref{eq:strong_convex_f} is relaxed into the following semi-convexity condition, i.e., there exists $\mu\in [-L_{fa},L_{fa}]$ such that for all 
$(t,x)\in [0,T] \t \sR^n$, $a,a'\in 
 \bA
$
and $\eta\in [0,1]$,
 $$
 \eta f_t(x,a)+(1-\eta ) f_t(x,a')
\ge 
f_t(x, \eta a+(1-\eta) a')
+ \eta(1-\eta)\tfrac{\mu}{2}|a-a'|^2,
$$
and $\ell$ is $\nu$-strongly convex with a sufficiently large $\nu$
(cf.~Condition \ref{item:large_mu_nu} below).
Such an assumption 
allows $f$ to be concave in $a$ and 
can be satisfied 
if the objective function 
\eqref{eq:control_value} 
involves entropy regularization
(see Example
\ref{example:entropy}).

\end{Remark}

Here we   present several important   nonsmooth costs used    in engineering and machine learning.

\begin{Example}[Control constraint]
\label{example:constraints}
Let  
 $\bA\subset \sR^k$ be a nonempty closed convex set
 and  
$\ell:\sR^k\to [0,\infty]$ be the indicator of $\bA$ satisfying 
$\ell(a)=0$ for $a\in \bA$ and 
$\ell(a)=\infty$ for $a\in \sR^k\setminus\bA$.
Then  \eqref{eq:strong_convex_l} holds 
with $\nu=0$, and for all $\tau>0$, 
$\prox_{\tau \ell}$ is the orthogonal projection on $\bA$. In this case, \eqref{eq:phi_update} extends the projected PGM in \cite{hambly2021policy} to general stochastic control problems. 
\end{Example}

\begin{Example}[Sparse control]
\label{example:sparse}
Let  $(\gamma_i)_{i=1}^k\subset [0,\infty)$
and $\ell:\sR^k\to [0,\infty)$ be such that    
$\ell(a)=\sum_{i=1}^k \gamma_i |a_i|$,
for $a=(a_i)_{i=1}^k \in \sR^k$.
Then \eqref{eq:strong_convex_l} holds 
with $\nu=0$, and  for all $\tau>0$, 
$\prox_{\tau \ell}(a)=(\max\{|a_i|-\tau \gamma_i,0\}\operatorname{sgn}(a_i))_{i=1}^k$
for each $a=(a_i)_{i=1}^k \in \sR^k$. 
In this case, \eqref{eq:phi_update} can be viewed as an infinite-dimensional extension of 
the iterative shrinkage-thresholding algorithm (see \cite{beck2009fast, reisinger2021fast}).
\end{Example}

\begin{Example}[$\mathfrak{f}$-divergence regularized control] 
\label{example:entropy}

Let  $\Delta_k\coloneqq\{
a\in  [0,1]^k\mid 
\sum_{i=1}^k a_i=1\}$,
  $\mathfrak{u}=(\mathfrak{u}_i)_{i=1}^k\in \Delta_k\cap (0,1)^k$, 
 and 
 $\ell:\sR^k\to \sR\cup \{\infty\}$ be the  $\mathfrak{f}$-divergence defined by
 $$
 \ell(a)\coloneqq 
\sum_{i=1}^k \mathfrak{u}_i \mathfrak{f}\Big(\frac{a_i}
{\mathfrak{u}_i}
\Big),
\quad a\in \Delta_k;
\quad 
\ell(a)=\infty,
\quad 
a\not \in \Delta_k
 $$
with a given 
lower semicontinuous function
$\mathfrak{f}:[0,\infty)\to \sR\cup \{\infty\}$ 
satisfying
 $\mathfrak{f}(0)=\lim_{x\to 0} \mathfrak{f}(x)$, $\mathfrak{f}(1)=0$, and 
 being $\kappa_{\mathfrak{u}}$-strongly convex on $[0,\tfrac{1}{\min_i\mathfrak{u}_i}]$ with some  $\kappa_{\mathfrak{u}}>0$.
 As shown in 
 \cite[Example 2.2]{guo2021reinforcement},
 $\ell$ 
  satisfies (H.\ref{assum:pgm})
with
$\nu=\tfrac{\kappa_{\mathfrak{u}}}{\max_i\mathfrak{u}_i}>0$.

Note that  an  $\mathfrak{f}$-divergence
$\ell$ 
is 
typically  non-differentiable
and may have non-closed 
  effective domain $\bA$
(see 
\cite{guo2021reinforcement} for 
concrete examples).
For  commonly used forms of $\mathfrak{f}$-divergence,
the  proximal map  $\prox_{\ell}$ can   
be computed by solving 
\eqref{eq:proximal} with
 Lagrange multipliers.
For instance, 
let 
$\ell$ be the relative entropy 
corresponding to  $\mathfrak{f}(s)=s\log s$, $s\in \sR$.
Then for each $\tau>0$
and $a=(a_i)_{i=1}^k \in \sR^k$,
$\prox_{\tau \ell}(a)_i
=\tau W\left(\frac{\mathfrak{u}_i}{\tau}\exp\big(\frac{\lambda+a_i}{\tau}-1\big)\right)
$ for all $i=1,\ldots, k$,
where $W:[0,\infty)\to [0,\infty)$ is the Lambert W-function, and  $\lambda\in \sR$
is the unique solution to 
$\sum_{i=1}^k\tau W\left(\frac{\mathfrak{u}_i}{\tau}\exp\big(\frac{\lambda+a_i}{\tau}-1\big)\right)=1$.

\end{Example}

\subsection{{Well-posedness of the iterates}}\label{sec:main2}

In the sequel, we focus on Lipschitz continuous feedback controls such that the corresponding controlled state dynamics \eqref{eq:control_fwd} admits a strong solution.
Due to the (possible) unboundedness of  the action set $\bA$, these  controls  in general grow linearly with respect to the state variable.


\begin{Definition}\l{def:fb}
Let 
$\cB([0,T]\t \sR^n;\sR^k)$
be the space of  measurable functions $\phi:[0,T]\t\sR^n\to \sR^k$,
and let 
 $|\cdot|_{0},[\cdot]_{1}:\cB([0,T]\t \sR^n;\sR^k)\to [0,\infty]$
be 
such that for all $\phi\in\cB([0,T]\t \sR^n;\sR^k)$, 
$$
|\phi|_{0}=\sup_{(t,x)\in [0,T]\t \sR^n}\frac{|\phi_t(x)|}{1+|x|},
\quad \q 
[\phi]_{1}=\sup_{t\in [0,T], x,y\in  \sR^n,x\not = y}\frac{|\phi_t(x)-\phi_t(y)|}{|x-y|}.
$$
We define  the  
following space of 
 feedback controls:
 \begin{align}
 \l{eq:lipschitz_feedback}
\cV_\bA 
&\coloneqq 
\left\{ 
\phi \in \cB( [0,T]\t \sR^n; \sR^k)
\,\middle\vert\, 
|\phi|_0+[\phi]_1<\infty,
\textnormal{$\phi_t(x)\in \bA$ for a.e.~$(t,x)\in [0,T]\t \sR^n$}
\right\},
\end{align}
and for each
  $\phi\in \cV_\bA$,
 define
 the  associated control process
 $\alpha^\phi\in \cH^2(\sR^k)$
  by
 $\alpha^\phi_t=\phi_t(X^{\xi_0,\phi}_t)$
 $\d t\otimes \d \sP$-a.e.,
 where  $X^{\xi_0,\phi}\in \cS^2(\sR^n)$ 
 is the solution to the  following SDE (cf.~\eqref{eq:control_fwd}):
\bb\l{eq:control_fwd_fb}
\d X_t=
b_t(X_t,\phi_t(X_t))\, \d t
+\sigma_t(X_t)\, \d W_t,
\q t\in [0,T];
\q 
X_0=\xi_0.
\ee

\end{Definition}

{
The Lipschitz regularity of $\phi\in \cV_\bA$ ensures that  
  the system
\eqref{sde_feedback}-\eqref{bsde_feedback}
 admits a unique strong solution. 
We refer the reader to 
\cite{kerimkulov2020exponential, guo2021reinforcement}
for sufficient conditions under which 
the control problem admits an optimal feedback control in the class $\cV_\bA$.
However, we emphasise that 
 in this work we 
 do not require  the control problem 
\eqref{eq:control_fwd}-\eqref{eq:control_value} to have an optimal feedback control. 
 Instead, we   focus on constructing a policy $\phi\in\cV_\bA$ whose associated (open-loop)
control process is a stationary point of 
$J(\cdot;\xi_0)$;
see Section \ref{sec:main3} for details. 

}

Under (H.\ref{assum:pgm}), 
the iterative scheme \eqref{eq:phi_update}
is well-defined 
for any given guess 
$\phi^0\in \cV_\bA$ and stepsize $\tau>0$.
The proof of this
relies on the well-posedness and stability of the
FBSDEs \eqref{sde_feedback}-\eqref{bsde_feedback}, 
with extra difficulties arising from possibly
  non-Lipschitz and unbounded coefficients, i.e.,
  $ \hat{b}_t$ may be non-Lipschitz  in $x$, and $ \p_x H$ non-Lipschitz  in $(x,y)$ and unbounded in $x$. The detailed arguments can be found in Appendix 
 \ref{appendix:technical}.

\begin{Proposition}
\label{prop:well_posed_phi}
 Suppose (H.\ref{assum:pgm}) holds. Then for all 
 $\phi^0\in \cV_\bA$ and $\tau>0$,
 the iterates 
$(\phi^{m})_{m\in \sN_0}$ are well-defined functions in $\cV_\bA$.
\end{Proposition}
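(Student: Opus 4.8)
The plan is to argue by induction on $m\in\sN_0$. The base case $\phi^0\in\cV_\bA$ holds by hypothesis, so it suffices to show that whenever $\phi^m\in\cV_\bA$, the next iterate $\phi^{m+1}$ defined by \eqref{eq:phi_update} is again a well-defined element of $\cV_\bA$, i.e.\ $\phi^{m+1}_t(x)\in\bA$ for a.e.\ $(t,x)$ and both $|\phi^{m+1}|_0$ and $[\phi^{m+1}]_1$ are finite. The strategy is first to verify that the gradient map $G^m_t(x)\coloneqq \partial_a H^{\textrm{re}}_t(x,\phi^m_t(x),Y^{t,x,\phi^m}_t)$ is measurable in $(t,x)$, of at most linear growth and Lipschitz continuous in $x$, and then to transfer these properties to $\phi^{m+1}$ through the proximal operator, whose nonexpansiveness preserves exactly these two quantities.

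First I would invoke the well-posedness and stability results for the decoupled FBSDE \eqref{sde_feedback}--\eqref{bsde_feedback} established in Appendix \ref{appendix:technical}. For $\phi^m\in\cV_\bA$ the forward drift is affine in the control with a monotone component $\hat b$ (see \eqref{eq:hat_b_lipschitz}) composed with a Lipschitz, linearly growing feedback, which guarantees a unique strong solution $X^{t,x,\phi^m}$ depending Lipschitz-continuously on $x$; the linear backward equation \eqref{bsde_feedback} then has a unique solution $(Y^{t,x,\phi^m},Z^{t,x,\phi^m})$, and the flow property furnishes a deterministic, continuous (hence measurable) representation $Y^{t,x,\phi^m}_t=u^m(t,x)$. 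The crucial inputs are a uniform a priori bound $\sup_{(t,x)}|Y^{t,x,\phi^m}_t|<\infty$ together with the Lipschitz dependence $x\mapsto Y^{t,x,\phi^m}_t$. The bound follows from the boundedness of $\partial_x g$ and $\partial_x f$ by $C_g$ and $C_{fx}$ combined with the monotonicity condition \eqref{eq:hat_b_lipschitz}; establishing it despite the fact that the driver $\partial_x H$ is \emph{unbounded in $x$} (through the terms $\partial_x\hat b$ and $\partial_x\bar b\,a$ with $a=\phi^m_t(x)$ of linear growth) is the main obstacle, and is precisely the content of the technical estimates deferred to the appendix.

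Next I would make the gradient explicit. From \eqref{eq:Hamiltonian_re} and the affine structure \eqref{eq:affine}, and since the term $\rho\langle x,y\rangle$ is independent of $a$,
\[
\partial_a H^{\textrm{re}}_t(x,a,y)=\bar b_t(x)^\top y+\partial_a f_t(x,a).
\]
I would then bound each summand of $G^m_t(x)=\bar b_t(x)^\top Y^{t,x,\phi^m}_t+\partial_a f_t(x,\phi^m_t(x))$ separately. The first term is bounded (hence of linear growth) by $C_{\bar b}$ times the a priori bound on $Y$, and is Lipschitz in $x$ upon splitting $\bar b_t(x)^\top Y^{t,x}-\bar b_t(x')^\top Y^{t,x'}$ into two increments and using \eqref{eq:bar_b_lipschitz}, the boundedness of $Y$, and the Lipschitz property of $x\mapsto Y^{t,x,\phi^m}_t$. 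The second term is controlled via $|\partial_a f_t(0,0)|\le C_{fa}$, the Lipschitz bound \eqref{eq:f_a_lipschitz}, and $\phi^m\in\cV_\bA$, giving linear growth and a Lipschitz constant of order $L_{fa}(1+[\phi^m]_1)$. Hence $G^m$ is measurable with $|G^m|_0<\infty$ and $[G^m]_1<\infty$, and the same holds for the pre-proximal map $x\mapsto \phi^m_t(x)-\tau G^m_t(x)$.

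Finally I would use the properties of $\prox_{\tau\ell}$. Since $\ell$ is proper and lower semicontinuous by (H.\ref{assum:pgm}) and convex as a consequence of \eqref{eq:strong_convex_l}, the minimisation in \eqref{eq:proximal} admits a unique minimiser for every argument, so $\prox_{\tau\ell}\colon\sR^k\to\sR^k$ is a single-valued, $1$-Lipschitz map whose range lies in $\bA=\operatorname{dom}\ell$. Composing with the measurable, linearly growing and Lipschitz map $x\mapsto\phi^m_t(x)-\tau G^m_t(x)$ shows that $\phi^{m+1}_t(x)\in\bA$ for all $(t,x)$, that $[\phi^{m+1}]_1\le[\phi^m-\tau G^m]_1<\infty$ by $1$-Lipschitzness, and that $|\phi^{m+1}_t(x)|\le|\phi^m_t(x)-\tau G^m_t(x)|+|\prox_{\tau\ell}(0)|$, whence $|\phi^{m+1}|_0<\infty$. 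This confirms $\phi^{m+1}\in\cV_\bA$ and closes the induction.
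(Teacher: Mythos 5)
Your overall strategy coincides with the paper's: induction on $m$, well-posedness of the forward SDE for a locally Lipschitz, monotone drift, regularity (boundedness and Lipschitz continuity in $x$) of the map $(t,x)\mapsto Y^{t,x,\phi^m}_t$, explicit control of $\partial_a H^{\textrm{re}}$, and single-valuedness, nonexpansiveness and range in $\bA$ of $\prox_{\tau\ell}$; your last two steps are in fact spelled out more explicitly than in the paper, which merely notes they follow from (H.\ref{assum:pgm}).

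There is, however, a genuine gap at the heart of the argument: existence and uniqueness of $(Y^{t,x,\phi^m},Z^{t,x,\phi^m})$ solving \eqref{bsde_feedback} is not available off the shelf, and the difficulty is not only the a priori bound on $Y$ that you flag. The driver's coefficient of $Y$ is $\partial_x b_s(X^{t,x,\phi^m}_s,\phi^m_s(X^{t,x,\phi^m}_s))^\top-\rho I$, and $\partial_x\hat b$ is unbounded (only locally Lipschitz, of linear growth by \eqref{eq:hat_b_lipschitz}), so along the state process this coefficient is an unbounded random process. Hence the driver is neither uniformly Lipschitz in $y$ (ruling out the classical theory) nor does it satisfy the boundedness/growth hypotheses of the standard monotone-driver results; the paper explicitly notes that condition (A1(3)) of Briand--Carmona fails. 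This is precisely why the paper proves a dedicated result, Lemma \ref{lemma:adjoint_wellpose}: truncate the unbounded matrix process $A_t$, solve the resulting Lipschitz BSDEs, exploit the bounded terminal datum $\partial_x g(X^{t,x,\phi^m}_T)$ and bounded forcing $\partial_x f$ to obtain an $L^\infty$ bound on the approximate solutions uniformly in the truncation level, and pass to the limit by a stability/Cauchy argument. Your proof instead ``invokes the well-posedness and stability results for the decoupled FBSDE established in Appendix \ref{appendix:technical}''---but the appendix is exactly where this proposition is proved, so the citation is circular; as a standalone argument, the truncation (or an equivalent localization) idea is missing. Once that lemma is granted, the rest of your argument---Markovian representation of $Y$, boundedness and Lipschitz continuity of $x\mapsto Y^{t,x,\phi^m}_t$ as in Propositions \ref{prop:Y_bound} and \ref{prop:Y_stability}, regularity of the gradient map, and the proximal step---is sound and matches the paper.
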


{
\paragraph{Regularity of the gradient direction in \eqref{eq:phi_update}.}
 Here we emphasise the   importance of the  gradient direction in \eqref{eq:phi_update}
on the well-posedness of the iterates $(\phi^{m})_{m\in \sN_0}$.
Observe that  if $\phi^m\in \cV_\bA$,
then 
classical stability results of    \eqref{sde_feedback}-\eqref{bsde_feedback}
imply that 
the map $x\mapsto Y^{t,x,\phi^m}_t$ in 
\eqref{eq:phi_update}
is Lipschitz continuous uniformly in $t$,
which subsequently ensures that $\phi^{m+1}\in \cV_\bA$.
Such a Lipschitz regularity holds even if the diffusion coefficient of \eqref{eq:control_fwd} degenerates, which includes   deterministic control problems as   special cases.
 
The above regularity estimate    in general does not hold if 
one updates   a feedback control using    the gradient of $J$  at the feedback map itself,
especially when the diffusion coefficient of  \eqref{eq:control_fwd} degenerates. 
To see this, we assume for simplicity that  all variables are one-dimensional, 
and consider minimising the following cost
(with 
$\xi_0=x_0$ and $\sigma=0$ in \eqref{eq:control_fwd}
and 
$\ell= 0$ and $\rho=0$  in \eqref{eq:control_value})
over all $\phi\in \cV_\bA$:
\bb
\label{eq:cost_feedback}
J(\phi;x_0)=
\int_0^T 
 f_t(X^{x_0,\phi}_t,\phi_t(X^{x_0,\phi}_t))   \, \d t+    g(X^{x_0,\phi}_T),
\ee
where for each $\phi\in \cV_\bA$,
 $X^{x_0, \phi}_t = x_0 +\int_0^t b_s(X^{x_0, \phi}_s,\phi_s(X^{x_0, \phi}_s))\, \d s $
for all $t\in [0,T]$.
By \cite[Section 4.1]{bensoussan2013mean},
 for any given   $\psi \in \cV_\bA$, the derivative of $J$ at $\phi \in \cV_\bA$ in the direction $\psi$ is  
\begin{equation}
\label{eq:derivative_feedback}
    \frac{\mathrm{d} J(\phi + \varepsilon \psi)}{\mathrm{d} \varepsilon} \Big |_{\varepsilon =0} = 
    \int_{0}^{T} \partial_{a} H^{\textrm{re}}_t(X^{x_0,\phi}_t,\phi_t(X^{x_0,\phi}_t),\partial_x \mathfrak{u}^\phi_t(X^{x_0,\phi}_t)) \psi_t(X^{x_0,\phi}_t) \, \mathrm{d}t, 
\end{equation}
where  $H^{\textrm{re}}$ is defined in 
\eqref{eq:Hamiltonian_re}, 
and $ \mathfrak{u}^\phi:[0,T] \times \mathbb{R} \to \mathbb{R}$ satisfies for all $(t,x)\in [0,T]\t \sR$, 
 \begin{align}\label{eq:linear_pde}
\partial_t   \mathfrak{u}_t(x)+
H^{\textrm re}_t\big(x,\phi_t(x), \p_{x}  \mathfrak{u}_t(x)\big)=0; \quad  \mathfrak{u}_T(x)=g(x).
\end{align}
Observe that \eqref{eq:derivative_feedback} is an $L^2$-inner product between 
$(t,x)\mapsto \partial_{a} H^{\textrm{re}}_t(x,\phi_t(x),\partial_x \mathfrak{u}^\phi_t(x)) $
and $\psi$ with respect to the law of $X^{x_0, \phi}$.
This leads to the following iterative scheme,  which is a direct application of the gradient descent algorithm  for feedback controls:
\bb\label{eq:gd_phi}
\phi^{m+1}_t(x)=\phi^{m}_t(x)-\tau 
\p_a H^{\textrm re}_t \big(x,\phi^m_t(x),
\partial_x \mathfrak{u}^{\phi^m}_t  (x)\big),
\quad (t,x)\in [0,T]\t \sR.
\ee
However,  the iteration \eqref{eq:gd_phi} 
in general does not preserve the   regularity of the   iterates $(\phi^m)_{m\in \mathbb{N}}$,
and hence may not be well-defined.
To see this, assume that $\phi^m\in \cV_\bA$  
for some $m\in \mathbb{N}$. Then by \eqref{eq:gd_phi},
the regularity of $\phi^{m+1}$ depends on the regularity of $\partial_x \mathfrak{u}^{\phi^m}$. Formally taking derivatives of \eqref{eq:linear_pde} with respect to $x$ implies that $w^{m} \coloneqq   \partial_x \mathfrak{u}^{\phi^m}$ 
satisfies the following PDE:
for all $(t,x)\in [0,T]\t \sR$,
\begin{align}\label{PDEsystem_gd}
\begin{split}
&\partial_t w_t(x)
+
b_t(x,\phi^{m}_t(x)) \partial_x w_t(x)
+\p _x H^{\textrm re}_t\big(x,\phi^m_t(x), w_t(x)\big)
 = 
-\p_a H^{\textrm re}_t\big(x,\phi^m_t(x), w_t(x) \big)
\p_x \phi^m_t(x),
\end{split}
\end{align}
with $w_T(x)=\p_x g(x)$. The  term $\p_a H^{\textrm{re}}\,\p_x \phi^m$ on the right-hand side of \eqref{PDEsystem_gd} appears due to  the application of the chain rule. 
The Lipschitz continuity of $\phi^m$ only implies the boundedness of 
$\,\p_x \phi^m$, and consequently
both    
$\partial_x \mathfrak{u}^{\phi^m}$
and   $\phi^{m+1}$   are in general  not Lipschitz continuous.
More crucially, it indicates  that  estimating the derivatives of 
$\phi^{m+1}$ requires bounds on higher order derivatives of $\phi^{m}$, and it is unclear how to close this norm gap.

In contrast, 
such a loss of regularity does not occur in 
  \eqref{eq:phi_update}. Indeed, in  the setting of
\eqref{eq:cost_feedback},  
$Z^{t,x,\phi}\equiv 0$ in \eqref{bsde_feedback}, 
and hence by the Feynman-Kac formula, \eqref{eq:phi_update} is   equivalently to  
\begin{align*}
 \phi_t^{m+1}(x)&=   \phi_t^m(x)-\tau {
  \partial_{a}} H_t^{\textrm{re}}(x,\phi_t^m(x),u^{m}_t(x)), 
 \q 
 \fa (t,x)\in [0,T]\t \sR,
\end{align*}
where $u^{m}$ is the unique  continuous viscosity solution to the following  PDE:
for all $(t,x)\in [0,T]\t \sR$,
\begin{equation}
\label{PDE_feymankac}
    \partial_t u_t(x) +b_t(x,\phi^{m}_t(x)) \partial_x u_t(x) +\partial_x H_t^{\textrm{re}}(x,\phi_t^{m}(x),u_t(x))=0, \qquad u_T(x) = \partial_x g(x).
\end{equation}
Note that \eqref{PDE_feymankac} does not involve  the term  $\partial_x H_t^{\textrm{re}}\p_x \phi^m$,
and   under (H.\ref{assum:pgm}), 
all coefficients of \eqref{PDE_feymankac} are 
sufficiently regular
such that 
$u^{m}$ is indeed Lipschitz continuous in $x$ (uniformly in $t$),
according to  classical Lipschitz estimates of viscosity solution
(see e.g., \cite{barles2002convergence}).

 }

\subsection{Linear convergence of the iterates}\label{sec:main3}

The main contribution of this article is to identify conditions under which
$(\phi^m)_{m\in \sN_0}\subset \cV_\bA$ converge linearly to 
a stationary point of
the control problem 
\eqref{eq:control_fwd}-\eqref{eq:control_value}.
As the functional $J(\cdot;\xi_0):\cH^2(\sR^k)\to \sR\cup\{\infty\}$ is typically nonsmooth and nonconvex, 
we first recall a notion of stationary points for nonsmooth nonconvex functionals on  Hilbert spaces, defined as in
 \cite{mordukhovich2006variational}. By \cite[Proposition 1.114]{mordukhovich2006variational}, every local minimizer $\alpha^\star\in \operatorname{dom} J(\cdot;\xi_0)$ is a stationary point in the sense of Definition \ref{def:stationary}.
In practice, a stationary point found in this way often gives a good solution candidate  \cite{li2018maximum}.
 

 \begin{Definition}
 \label{def:stationary}
 Let $X$ be a Hilbert space
 equipped with the norm $\|\cdot\|_X$ and the inner product $\la \cdot,\cdot\ra_X$,  $F:X\to \sR\cup \{\infty\}$, and ${x}^\star\in \operatorname{dom} F=\{x\in X\mid F(x)<\infty\}$.
 The Fr\'{e}chet subdifferential of $F$ at ${x}^\star$ is defined by
 $$
 \p F({x}^\star)=
 \bigg\{\bar{x}\in X\bigg\vert 
 \liminf_{x\to {x}^\star}
 \frac{F(x)-F({x}^\star)-\la \bar{x}, x-{x}^\star \ra_X}{\|x-{x}^\star\|_X}\ge 0\bigg\}.
 $$
 We say ${x}^\star\in \operatorname{dom} F$ is a stationary point of $F$ if $0\in \p F({x}^\star)$.
 \end{Definition}

As alluded to earlier,  the map 
 $\cV_\bA\ni \phi\mapsto J(\a^\phi;\xi_0)\in \sR\cup\{\infty\}$ is typically nonconvex and 
may not satisfy  
 the Polyak-{\L}ojasiewicz condition 
 as in the setting 
with parametric policies
(\cite{fazel2018global,
 wang2019neural,
 mei2020global,
 gu2021mean,
 hambly2021policy,
 kerimkulov2022convergence}).
 Hence to ensure the linear convergence of the PPGM \eqref{eq:phi_update},
 we impose further conditions
 on the coefficients which guarantee that we are in \emph{one of the following {six} cases}:
\begin{enumerate}[(i)]

    \item
    \label{item:T_small}
    Time horizon $T$ is small. 
    \item 
    \label{item:rho} {
    Discount factor $\rho$ is large.} 
    \item 
    \label{item:large_mu_nu}
    Running cost is sufficiently convex in control,  
    i.e., $\mu+\nu$ is sufficiently large.
     \item 
     \label{item:small_cost}
     Costs depend weakly on state, i.e., $C_{fx}, L_{fx},  C_g$ and $L_g$ are  small.
      \item 
    \label{item:small_Cb}
    Control affects  state dynamics  weakly, i.e., $C_{\bar{b}}$ is  small.
    \item 
    \label{item:small_k}
    State dynamics is strongly dissipative, i.e., $\kappa_{\hat{b}}$ is sufficiently negative.
   
\end{enumerate}
The above conditions will be made precise in  \eqref{eq:condition_lipschitz} 
 and 
\eqref{eq:constraction_condition}.
Here we give some 
practical implications 
of these conditions. 

\begin{Remark}
\label{rml:convergence_condition}
Conditions \ref{item:T_small} 
and \ref{item:rho}
are commonly used conditions to ensure  the convergence of iterative  algorithms  for nonconvex problems (see e.g., \cite{bender2008time,
bayraktar2018numerical,
hu2019deep,ito2021neural}). 
{
 Condition \ref{item:rho}   also justifies the use of a fictitious discount factor 
to accelerate the convergence of PGMs
 for continuous-time control problems
(see \cite{guo2021theoretical} and references therein).
}

Conditions \ref{item:large_mu_nu}-\ref{item:small_Cb}
help to 
ease the nonconvexity 
of $\phi\mapsto J(\a^\phi;\xi_0)$
and to reduce the oscillation of the loss function's curvature,
which subsequently
promotes  the convergence of gradient-based algorithms
(see  \cite{nesterov2003introductory}).
Condition \ref{item:large_mu_nu},
along with Example \ref{example:entropy},
also justifies recent reinforcement learning heuristics that adding   $\mathfrak{f}$-divergences, such as the relative entropy, to the optimization objective can  accelerate the convergence of PGMs
(see e.g., \cite{vsivska2020gradient, kerimkulov2022convergence}).

Condition \ref{item:small_k} indicates that a
strong dissipativity of the state dynamics enhances the efficiency of  learning algorithms.
Such a phenomenon has already been observed in the LQ setting
with  $\hat{b}_t(x)=A_t x$ in \eqref{eq:affine},
where the desired 
dissipativity   can be ensured
if  eigenvalues of $A_t$ are sufficiently negative
(see  \cite{
hu2019deep,hambly2021policy}).
Condition \ref{item:small_k}
also motivates a residual correction method 
for  solving nonlinear control problems.
Consider  a  control problem \eqref{eq:control_fwd}-\eqref{eq:control_value} whose drift involves non-dissipative coefficient $\hat{b}$. Then 
one can search feedback controls of the form
$\phi=\ol{\phi}+\widetilde{\phi}$,
where $\ol{\phi}$
is a precomputed candidate policy, and $\tilde{\phi}$  is an unknown  residual correction. Observe that 
the state dynamics now has the drift coefficient
 $b=(\hat{b}+\bar{b}\ol{\phi})+\bar{b}\widetilde{\phi}$, 
and the function $\hat{b}+\bar{b}\ol{\phi}$ may be dissipative for suitably chosen policy $\ol{\phi}$;
see \cite{reisinger2021fast} and references therein 
for computing  $\ol{\phi}$ via linearization
and the efficiency improvement of 
the residual correction method
over plain PGMs.

\end{Remark}

Now we present   the main theorem
on the linear convergence of the PPGM  
\eqref{eq:phi_update}
as $m$ tends to infinity.
 The  precise statement and proof will be given in 
 Section 
 \ref{sec:conv_stationary}
 (see Theorem \ref{thm:convergence_stationary}).

 \begin{Theorem}\label{thm:convergence_stationary_formal}
Suppose (H.\ref{assum:pgm}) holds. For all
 $\phi^0 \in \cV_\bA$ 
 and 
  $\tau \in (0, \frac{2}{\mu + L_{fa}}\wedge \frac{1}{\nu}]$, if one of  conditions \ref{item:T_small}-\ref{item:small_k} holds,
then there exist 
$\phi^\star\in \cV_\bA$ 
and constants  $c \in [0,1)$ and $\widetilde{C}\ge 0$ 
such that 
\begin{enumerate}[(1)]

    \item
    
$\alpha^{\phi^\star}$ 
is a stationary point of
$J(\cdot;\xi_0):\cH^2(\sR^k)\to \sR\cup\{\infty\}$
defined as in \eqref{eq:control_value},
 \item
  for all $m\in \sN_0$,   
    $  |\phi^{m+1}-\phi^{\star}|_0\le  
c|\phi^{m}-\phi^\star|_0$
and  $\|\a^{\phi^{m}}-\a^{\phi^\star}\|_{\cH^2}
\le \widetilde{C}
c^m$.
   
\end{enumerate}

 \end{Theorem}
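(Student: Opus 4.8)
The plan is to recast \eqref{eq:phi_update} as the iteration of a single operator on $\cV_\bA$ and to show it is a contraction in the weighted norm $|\cdot|_0$. Define $\cT\colon\cV_\bA\to\cV_\bA$ by
\[
(\cT\phi)_t(x)=\prox_{\tau\ell}\big(\phi_t(x)-\tau\,\partial_a H^{\textrm{re}}_t(x,\phi_t(x),Y^{t,x,\phi}_t)\big),
\]
so that $\phi^{m+1}=\cT\phi^m$ and Proposition \ref{prop:well_posed_phi} already guarantees $\cT(\cV_\bA)\subseteq\cV_\bA$. By the control-affine structure \eqref{eq:affine} one has $\partial_a H^{\textrm{re}}_t(x,a,y)=\partial_a f_t(x,a)+\bar{b}_t(x)^\top y$, so the only nonlocal dependence of $\cT\phi$ on $\phi$ enters through the adjoint value $Y^{t,x,\phi}_t$. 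First I would record uniform a-priori estimates: exploiting the boundedness of $\partial_x f$ and $\partial_x g$ together with the adjoint BSDE \eqref{bsde_feedback} (the content of Proposition \ref{prop:Y_stability}), the iterates stay in a fixed bounded set $\cK\subset\cV_\bA$ with $\sup_{m\in\sN_0}(|\phi^m|_0+[\phi^m]_1)<\infty$. It therefore suffices to contract $\cT$ on $\cK$.

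The crux is a weighted stability estimate for the decoupled FBSDE \eqref{sde_feedback}--\eqref{bsde_feedback}: for all $\phi,\psi\in\cK$,
\[
\sup_{(t,x)\in[0,T]\t\sR^n}\frac{|Y^{t,x,\phi}_t-Y^{t,x,\psi}_t|}{1+|x|}\le\Lambda\,|\phi-\psi|_0,
\]
with a constant $\Lambda$ tracked explicitly in $T,\rho,C_{fx},L_{fx},C_g,L_g$ and $\kappa_{\hat b}$. To derive it I would first bound $\sE[\sup_s|X^{t,x,\phi}_s-X^{t,x,\psi}_s|^2]$ by applying It\^o to the squared difference: splitting the drift difference into a state part and a control part $(\phi-\psi)_s(X^{t,x,\phi}_s)$, bounded by $|\phi-\psi|_0(1+|X^{t,x,\phi}_s|)$, the one-sided condition $\la x-x',\hat b_t(x)-\hat b_t(x')\ra\le\kappa_{\hat b}|x-x'|^2$ supplies a damping term, so that either a strongly dissipative drift ($\kappa_{\hat b}$ sufficiently negative) or a small horizon $T$ controls the Gronwall factor. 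I would then transport this through the linear BSDE by applying It\^o to $|Y^{t,x,\phi}_s-Y^{t,x,\psi}_s|^2$: the term $-\rho\la x,y\ra$ in $H^{\textrm{re}}$ produces a factor $e^{-\rho(s-t)}$ that damps the backward propagation (so large $\rho$ shrinks $\Lambda$), while $|\partial_x f|\le C_{fx}$, $|\partial_x g|\le C_g$ and their Lipschitz constants make the source terms proportional to the state difference and to $C_{fx},L_{fx},C_g,L_g$ (so small cost sensitivity also shrinks $\Lambda$).

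Granting this estimate, the contraction is assembled pointwise. Writing $a=\phi_t(x)$, $a'=\psi_t(x)$, $y=Y^{t,x,\phi}_t$, $y'=Y^{t,x,\psi}_t$, the $\nu$-strong convexity of $\ell$ makes $\prox_{\tau\ell}$ a $(1+\tau\nu)^{-1}$-contraction, the $\mu$-strong convexity and $L_{fa}$-Lipschitz continuity of $a\mapsto\partial_a f_t(x,a)$ make $a\mapsto a-\tau\partial_a f_t(x,a)$ a contraction of factor $\max\{|1-\tau\mu|,|1-\tau L_{fa}|\}=1-\tau\mu$ for $\tau\le\frac{2}{\mu+L_{fa}}$ (using $\mu\le L_{fa}$), and the coupling is controlled by $\tau|\bar b_t(x)|\,|y-y'|\le\tau C_{\bar b}\Lambda(1+|x|)|\phi-\psi|_0$. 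Dividing by $1+|x|$ and taking the supremum gives
\[
|\cT\phi-\cT\psi|_0\le\frac{(1-\tau\mu)+\tau C_{\bar b}\Lambda}{1+\tau\nu}\,|\phi-\psi|_0=:c\,|\phi-\psi|_0,
\]
and one verifies that each of the six regimes \ref{item:T_small}--\ref{item:small_k} forces $c<1$: conditions \ref{item:T_small}, \ref{item:rho}, \ref{item:small_cost} and \ref{item:small_Cb} shrink the numerator's second term by making $\Lambda$ or $C_{\bar b}$ small, while \ref{item:large_mu_nu} enlarges the gap between numerator and denominator, and \ref{item:small_k} again shrinks $\Lambda$. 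Since a $|\cdot|_0$-Cauchy sequence of uniformly Lipschitz, uniformly linearly growing maps converges to such a map, the relevant subset is complete for $|\cdot|_0$; the Banach fixed-point theorem then delivers $\phi^\star$ with $\cT\phi^\star=\phi^\star$, and because $\phi^\star$ is itself a proximal value it takes values in $\bA$, so $\phi^\star\in\cV_\bA$ with $|\phi^{m+1}-\phi^\star|_0\le c\,|\phi^m-\phi^\star|_0$.

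It remains to identify $\alpha^{\phi^\star}$ as stationary and to obtain the $\cH^2$ rate. The fixed-point identity with the proximal characterization $q=\prox_{\tau\ell}(p)\iff p-q\in\tau\partial\ell(q)$ gives $-\partial_a H^{\textrm{re}}_t(x,\phi^\star_t(x),Y^{t,x,\phi^\star}_t)\in\partial\ell(\phi^\star_t(x))$; evaluating along $X^{\xi_0,\phi^\star}$ and invoking the Markovian gradient representation $\partial_a H^{\textrm{re}}_t|_{x=X_t}=e^{\rho t}(\nabla_\alpha J_{\mathrm{diff}})_t$ recorded before the statement yields $0\in\partial J(\alpha^{\phi^\star};\xi_0)$ in the sense of Definition \ref{def:stationary}. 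Finally, since $\alpha^{\phi^m}_t=\phi^m_t(X^{\xi_0,\phi^m}_t)$, combining $|\phi^m-\phi^\star|_0\le c^m|\phi^0-\phi^\star|_0$ with forward SDE stability for $X^{\xi_0,\phi^m}\to X^{\xi_0,\phi^\star}$ gives $\|\alpha^{\phi^m}-\alpha^{\phi^\star}\|_{\cH^2}\le\widetilde C\,c^m$. The main obstacle is the weighted FBSDE stability estimate of the second paragraph: because $\hat b$ is only one-sidedly Lipschitz and both the coefficients and the driver $\partial_x H$ are unbounded and non-Lipschitz in $x$, the standard $L^2$ stability theory for FBSDEs does not apply directly, and $\Lambda$ must be tracked with enough precision that its dependence on $(T,\rho,\kappa_{\hat b},C_{fx},L_{fx},C_g,L_g)$ cleanly separates the six sufficient regimes.
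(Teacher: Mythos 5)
Your overall architecture --- a proximal-gradient operator $\cT$ on $\cV_\bA$, a weighted FBSDE stability estimate, contraction of $\cT$ in $|\cdot|_0$, Banach fixed point, and the proximal characterization of stationarity --- is exactly the paper's route (Lemma \ref{Lemma_iteration_1}, Propositions \ref{prop:Y_stability} and \ref{prop:Y_stability2}, Theorems \ref{thm:iterate_contration} and \ref{thm:convergence_stationary}). However, there is a genuine gap at the step you dismiss as recording ``uniform a-priori estimates'': the claim $\sup_{m\in\sN_0}(|\phi^m|_0+[\phi^m]_1)<\infty$ does \emph{not} follow from the boundedness of $\p_x f$ and $\p_x g$ alone, and it is in fact the central difficulty of the whole proof. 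Boundedness of the costs' spatial gradients gives the uniform bound $|Y^{t,x,\phi}_t|\le C_Y$ (Proposition \ref{prop:Y_bound}) and hence the bound on $\sup_{m,t}|\phi^m_t(0)|$ (Theorem \ref{thm:phi_bound}); but the Lipschitz constant of $x\mapsto Y^{t,x,\phi^m}_t$ is $L_Y([\phi^m]_1)$, which grows \emph{exponentially} in $[\phi^m]_1$, because the Lipschitz constant of the controlled flow $x\mapsto X^{t,x,\phi^m}_s$ carries the Gronwall factor $e^{(\kappa_{\hat{b}}+L_{\bar{b}}(1+[\phi^m]_1))T}$ (Proposition \ref{prop:Y_stability}). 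The one-step estimate therefore reads $[\phi^{m+1}]_1\le(1-\tau C)[\phi^m]_1+\tau C\big(L_Y([\phi^m]_1)+1\big)$, a recursion of the schematic form $\ell_{m+1}\le(1-a)\ell_m+b\,e^{c\,\ell_m}+d$, which does not close by itself: one must prove by induction, under the quantitative smallness condition \eqref{eq:condition_lipschitz} --- this is where conditions \ref{item:T_small}--\ref{item:small_k} first enter, \emph{before} any contraction argument --- that the exponential term stays dominated (Theorem \ref{thm:lipschitz_iterates}). Without this, your set $\cK$ is not known to be $\cT$-invariant, and your constant $\Lambda$, which itself depends exponentially on the Lipschitz seminorms of the two policies being compared, is not uniformly controlled, so no choice among the six regimes makes your contraction factor $c<1$.

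A secondary omission: in your It\^{o} argument for the weighted stability of $Y$, the driver $\p_x H$ contains the term $\p_x\sigma^\top z$, so the difference of drivers produces $L_\sigma|X^{t,x,\phi}_s-X^{t,x,\psi}_s|\,|Z^{t,x,\psi}_s|$, and you need control of $Z^{t,x,\psi}$ to absorb it; the paper obtains the pointwise bound $|Z^{t,x,\psi}_s|\le C_\sigma L_Y([\psi]_1)$ via Malliavin calculus (Lemma \ref{lem:MD}), which again is only useful once the uniform Lipschitz bound is in hand. Your stationarity step is also slightly too quick --- the fixed-point identity holds for the undiscounted Hamiltonian $H^{\textrm{re}}$ and $\prox_{\tau\ell}$, while stationarity of $J$ involves the discounted derivative and $\prox_{\tau e^{-\rho t}\ell}$, and the paper reconciles the two by rescaling the proximal parameter --- but that is repairable; the missing uniform Lipschitz argument is not.
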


The precise constant $c$, which determines the rate of convergence, 
 will be given in the proof
based on conditions 
  \ref{item:T_small}-\ref{item:small_k}.
  Roughly speaking, the stronger the cost  convexity  (resp.~the stronger the state dissipativity, 
  the weaker the state/control coupling, the smaller the time horizon, the larger the discount factor), the smaller one can choose $c$ and, hence, the faster the iteration converges.

Note that   Theorem \ref{thm:convergence_stationary_formal} does not require nondegeneracy of 
$\xi_0$ and $\sigma$, and can be extended to quadratically growing cost functions (see Remark \ref{rmk:assum}).
As \eqref{eq:phi_update} concerns iterations of unbounded and nonlinear feedback controls,
the proof of convergence  is rather technical.
Here we  outline the key steps  for the reader's convenience. 

\begin{proof}[\textbf{Sketched proof of Theorem \ref{thm:convergence_stationary_formal}}]
 Observe that a necessary condition on the convergence of $(\a^{\phi^m})_{m\in \sN_0}$ 
 is 
 that 
 $(\|X^{\xi_0,\phi^m}\|_{\cH^2})_{m\in \sN_0}$ are uniformly bounded  in $m$. By standard moment estimates of SDEs, 
  it seems unavoidable to control the Lipschitz constant of 
$( \phi^m)_{m\in \sN}$
in order to obtain the desired convergence result. This uniform regularity estimate is the main technical difficulty in analyzing  \eqref{eq:phi_update},  compared with the analyses of iterative algorithms for open-loop controls in \cite{
   li2018maximum,vsivska2020gradient, kerimkulov2021modified}.
 
 To this end,
 suppose that $\phi^m\in \cV_\bA$ for a  given $m\in \sN_0$.
By exploiting  \eqref{eq:phi_update} and the convexity of $f$ and $\ell$, for  all sufficiently small $\tau>0$,
\begin{align*}
[\phi^{m+1}]_1
&\leq (
1 - \tau C
)
[\phi^m]_1 + \tau C   \left(
\sup_{t,x,x'}\frac{|Y^{t,x,\phi^m}_t -Y^{t,x',\phi^m}_t|}{|x-x'|} +
\sup_{t,x}|Y^{t,x,{\phi}^m}_t| + 1\right),
\end{align*}
where the constant  $C>0$ depends only on  coefficients (see Lemma \ref{Lemma_iteration_1}).
An a-priori estimate of \eqref{bsde_feedback}
and 
the boundedness of $\p_x f$ and $\p_x g$ imply $\sup_{m,t,x}|Y^{t,x,{\phi}^m}_t|<\infty$, 
while Lipschitz estimates of 
\eqref{sde_feedback} and 
\eqref{bsde_feedback}
imply that 
$x\mapsto Y^{t,x,\phi^m}_t$
is Lipschitz continuous uniformly in $t$, where 
the Lipschitz constant
$L_Y([\phi^m]_1)$
depends  \textit{exponentially} on $[\phi^m]_1$
due to the feedback controlled dynamics \eqref{sde_feedback}
(see Proposition \ref{prop:Y_stability}).
Combining these estimates 
gives 
$[\phi^{m+1}]_1
\leq (
1 - \tau C
)
[\phi^m]_1 + \tau C  (L_Y([\phi^m]_1)+1)$.
We then show in Theorem 
\ref{thm:lipschitz_iterates}
that 
under suitable conditions on the coefficients, 
such an exponential dependence can be controlled, and further deduce that 
$\sup_{m}[\phi^m]_1<\infty$.

We then proceed to prove the linear convergence of $(\phi^m)_{m\in \sN}$.
Using the strong convexity of costs, 
for sufficiently small $\tau>0$, 
\begin{align}
\label{eq:linear_conv_sketch}
 |\phi^{m+1}-\phi^{m}|_0    \le 
(
1 - \tau C)
|\phi^{m}-\phi^{{
m-1}}|_0
+\tau C \sup_{t,x}\frac{|Y_t^{t,x,\phi^m}-Y_t^{t,x,\phi^{m-1}}|}{1+|x|},
\quad \fa m\in \sN.
\end{align}
Based on  $\sup_{m}[\phi^m]_1<\infty$, 
we prove by Malliavin calculus that 
$\sup_{m,t,x,s}|Z^{t,x,\phi^m}_s|<\infty$ (see Lemma \ref{lem:MD})
and further 
by stability estimates of \eqref{sde_feedback} and \eqref{bsde_feedback}
that 
$|Y_t^{t,x,\phi^m}-Y_t^{t,x,\phi^{m-1}}|\le \tilde{C} (1+|x|)|\phi^m-\phi^{m-1}|_0$,
for some constant $\tilde{C}$ independent of $t,x,m$ (see Proposition  \ref{prop:Y_stability2}).
By quantifying  $C$ in \eqref{eq:linear_conv_sketch}
and $\tilde{C}$ precisely,
we prove  
under each of the conditions \ref{item:T_small}-\ref{item:small_k}
that  there exists $c\in [0,1)$ such that  
$|\phi^{m+1}-\phi^{m}|_0    \le 
c
|\phi^{m}-\phi^{m-1}|_0$ 
for all $m$, which subsequently implies the convergence of  $(\phi^m)_{m\in \sN}$
due to 
Banach's fixed point theorem. Finally,
we show that 
the limit of 
$(\phi^m)_{m\in \sN}$
induces   
a stationary point of $J(\cdot; \xi_0)$,
based on an equivalent  characterization of stationary points of $J(\cdot; \xi_0)$
in terms of adjoint processes and proximal map of $\ell$
(see Theorem  \ref{thm:convergence_stationary}).
\end{proof}

In practice,  \eqref{bsde_feedback}
can only be solved
approximately and  the update step \eqref{eq:phi_update} for the feedback controls can only be performed with this approximate solution, which  feeds the errors into subsequent iterations.
Hence we 
further quantify this effect by 
establishing a stability property of \eqref{eq:phi_update} under perturbations of solutions to \eqref{bsde_feedback}. 
 For clarity, we only carry out perturbation analysis for the computation of $Y^{t,x,\phi}_t$, 
but similar analysis can be performed 
for \eqref{eq:phi_update}
with inexact computation of the proximal map $\prox_{\tau \ell}$.
{
Our analysis    allows for  stochastic approximations of 
$Y^{t,x,\phi}_t$ resulting from     applying  
   probabilistic numerical methods
to solve   \eqref{bsde_feedback}
(see e.g.~\cite{gobet2005regression,han2018solving}).
}

More precisely, 
let 
  ${\phi}^0\in \cV_\bA$
  be an initial guess 
  and 
    $\tau>0$ be a stepsize.
{
At the $m$-th iteration with $m\in \sN_0$,
   let $\widetilde{\phi}^m$
   be the (random) feedback control obtained at the previous iteration
    (with $\widetilde{\phi}^0=\phi^0$).
    That is, $\widetilde{\phi}^m:[0,T]\t \sR^n\t \Omega\to \bA$ is a measurable function such that 
     $\widetilde{\phi}^m_\cdot(\cdot,\om)\in \cV_\bA$ for a.s.~$\om\in \Om$.
    Consider 
%
%
%
%
  a {
    measurable} function 
$\widetilde{\mathcal{Y}}^{\widetilde{\phi}^m}:[0,T] \t \sR^n\t {
\Omega}\to \sR^n$
such that for a.s.~$\om\in \Om$,
$(t,x)\mapsto \widetilde{\mathcal{Y}}^{\widetilde{\phi}^m}_t(x,\omega)$
approximates  $
 (t,x)\mapsto   \cY_t^{\widetilde{\phi}^m}(x,\omega)\coloneqq  Y^{t,x,\widetilde{\phi}^m_\cdot(\cdot,\om)}_t$,
 where
$Y^{t,x,\widetilde{\phi}^m_\cdot(\cdot,\om)}_t\in \sR^n$
satisfies  \eqref{bsde_feedback} with the realised control $\widetilde{\phi}^m_\cdot(\cdot,\om)\in \cV_\bA$.
}%
The {
(random)} feedback control 
for the next iteration  is then obtained via a proximal gradient update
\eqref{eq:phi_update} 
based on $\widetilde{\mathcal{Y}}^{\widetilde{\phi}^m}$:
 \begin{align}
\label{eq:phi_update22}
 \widetilde{\phi}_t^{m+1}(x)&= \prox_{\tau\ell} \big(\widetilde{\phi}_t^m(x)-\tau {
  \partial_{a}} H_t^{\textrm{re}}(x,\widetilde{\phi}_t^m(x),\widetilde{\mathcal{Y}}_t^{\widetilde{\phi}^m}(x)) \big),
 \q (t,x)\in [0,T]\t \sR^n,
\end{align}
{
where the identity is understood in an almost sure sense.
}

The following theorem  
shows the accuracy of 
\eqref{eq:phi_update22}, 
whose precise statement and proof will  be given in Section \ref{sec:conv_stationary} (see  Theorem \ref{thm:conv_approximate}).
Here we  assume that $\widetilde{\mathcal{Y}}^{\widetilde{\phi}^m}$
approximates the 
 function $
{\mathcal{Y}}^{\widetilde{\phi}^m}$
well enough such that 
the resulting 
 controls $\widetilde{\phi}^m$ are uniformly bounded in time and uniformly Lipschitz in space.

\begin{Theorem}\l{thm:conv_approximate_formal}

Suppose (H.\ref{assum:pgm}) holds.
For all
 $\phi^0 \in \cV_\bA$ 
 and 
  $\tau \in (0, \frac{2}{\mu + L_{fa}}\wedge \frac{1}{\nu}]$, if 
  {
  $\sup_{m\in \sN, \om\in \Omega}(|\widetilde{\phi}^m_\cdot(\cdot,\om)|_0+[\widetilde{\phi}^m_\cdot(\cdot,\om)]_1)<\infty$},
  and 
  one of the conditions \ref{item:T_small}-\ref{item:small_k}  holds,
then there exist constants $c\in [0,1)$ and $C\ge 0$
such that 
{
for a.s.~$\om\in \Omega$ and
for all  $m\in\sN_0$,
\begin{align*}
    |\widetilde{\phi}^{m}_\cdot(\cdot,\om)-\phi^{\star} |_0 \leq c^{m}|\phi^0-\phi^\star|_0 +
    C
     \sum_{j=0}^{m-1} c^{m-1-j} 
     |\cY^{\widetilde{\phi}^j}_\cdot(\cdot,\om)- \widetilde{\mathcal{Y}}^{\widetilde{\phi}^j}_\cdot(\cdot,\om)|_0,
\end{align*}
where 
$\phi^\star\in \cV_\bA$ 
is the limit function  in Theorem \ref{thm:convergence_stationary_formal}.
Consequently, for all $p\ge 1$ and $m\in \sN_0$, 
\begin{align*}
  \sE[  |\widetilde{\phi}^{m}-\phi^{\star} |^p_0]^{\frac{1}{p}} \leq c^{m}|\phi^0-\phi^\star|_0 +
    C
     \sum_{j=0}^{m-1} c^{m-1-j} 
     \sE[|\cY^{\widetilde{\phi}^j}- \widetilde{\mathcal{Y}}^{\widetilde{\phi}^j}|^p_0]^{\frac{1}{p}}.
     \end{align*}

}%
\end{Theorem}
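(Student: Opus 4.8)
The plan is to reduce the approximate iteration \eqref{eq:phi_update22} to the exact iteration \eqref{eq:phi_update} by a pathwise one-step perturbation estimate, and then to unroll the resulting linear recursion. Fix $\om$ in the full-measure event on which $\widetilde{\phi}^m_\cdot(\cdot,\om)\in\cV_\bA$ for every $m$, and suppress $\om$ in the notation. The hypothesis $\sup_{m,\om}(|\widetilde{\phi}^m|_0+[\widetilde{\phi}^m]_1)<\infty$ confines all realised iterates to a fixed ball of $\cV_\bA$ with a common Lipschitz bound; this is exactly the regularity that, in the exact analysis, had to be propagated by hand in Theorem \ref{thm:lipschitz_iterates}, and which here is supplied by assumption so that the stability estimates of the exact theory apply uniformly in $m$ and $\om$.

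First I would introduce the exact one-step map $\Phi:\cV_\bA\to\cV_\bA$ underlying \eqref{eq:phi_update}, namely $\Phi(\phi)_t(x)=\prox_{\tau\ell}\big(\phi_t(x)-\tau\partial_a H^{\textrm{re}}_t(x,\phi_t(x),Y^{t,x,\phi}_t)\big)$, and set $\widehat{\phi}^{m+1}:=\Phi(\widetilde{\phi}^m)$, which lies in $\cV_\bA$ by Proposition \ref{prop:well_posed_phi}. Since $\phi^\star=\Phi(\phi^\star)$ is the fixed point from Theorem \ref{thm:convergence_stationary_formal}, I split
\[
|\widetilde{\phi}^{m+1}-\phi^\star|_0 \le |\widetilde{\phi}^{m+1}-\widehat{\phi}^{m+1}|_0 + |\Phi(\widetilde{\phi}^m)-\Phi(\phi^\star)|_0.
\]
The second summand is controlled by the contraction established in the proof of Theorem \ref{thm:convergence_stationary_formal}: under any one of conditions \ref{item:T_small}--\ref{item:small_k}, the stability estimate of Proposition \ref{prop:Y_stability2} yields $|\Phi(\phi)-\Phi(\psi)|_0\le c|\phi-\psi|_0$ with some $c\in[0,1)$ for all $\phi,\psi$ in a fixed ball of $\cV_\bA$ containing $\phi^\star$ and every $\widetilde{\phi}^m$, whence $|\Phi(\widetilde{\phi}^m)-\Phi(\phi^\star)|_0\le c|\widetilde{\phi}^m-\phi^\star|_0$.

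For the first summand, the point is that $\widetilde{\phi}^{m+1}$ and $\widehat{\phi}^{m+1}$ share the same input control $\widetilde{\phi}^m$ and differ only through $\widetilde{\mathcal{Y}}^{\widetilde{\phi}^m}$ versus the exact value $\cY^{\widetilde{\phi}^m}_t(x)=Y^{t,x,\widetilde{\phi}^m}_t$. Using that $\prox_{\tau\ell}$ is $1$-Lipschitz and that, by \eqref{eq:Hamiltonian_re} and the affine structure \eqref{eq:affine}, $\partial_a H^{\textrm{re}}_t(x,a,y)=\partial_a f_t(x,a)+\bar{b}_t(x)^\top y$ is linear in $y$ with $|\bar{b}_t(x)|\le C_{\bar{b}}$, I obtain the pointwise bound $|\widetilde{\phi}^{m+1}_t(x)-\widehat{\phi}^{m+1}_t(x)|\le \tau C_{\bar{b}}\,|\widetilde{\mathcal{Y}}^{\widetilde{\phi}^m}_t(x)-\cY^{\widetilde{\phi}^m}_t(x)|$; dividing by $1+|x|$ and taking the supremum gives $|\widetilde{\phi}^{m+1}-\widehat{\phi}^{m+1}|_0\le \tau C_{\bar{b}}\,|\widetilde{\mathcal{Y}}^{\widetilde{\phi}^m}-\cY^{\widetilde{\phi}^m}|_0$. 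Writing $e_m:=|\widetilde{\phi}^m-\phi^\star|_0$ and $\delta_m:=|\widetilde{\mathcal{Y}}^{\widetilde{\phi}^m}-\cY^{\widetilde{\phi}^m}|_0$, the two bounds combine to the recursion $e_{m+1}\le c\,e_m+C\delta_m$ with $C:=\tau C_{\bar{b}}$. Unrolling and using $e_0=|\phi^0-\phi^\star|_0$ (since $\widetilde{\phi}^0=\phi^0$) yields the claimed pathwise estimate. The $L^p$ bound then follows by taking $\sE[(\cdot)^p]^{1/p}$ of the pathwise inequality and applying Minkowski's inequality to the finite sum, the term $c^m|\phi^0-\phi^\star|_0$ being deterministic; measurability of $\om\mapsto e_m,\delta_j$ comes from the measurability hypotheses on $\widetilde{\phi}^m$ and $\widetilde{\mathcal{Y}}^{\widetilde{\phi}^m}$.

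I expect the main obstacle to be the second summand, specifically guaranteeing that the exact contraction constant $c<1$ remains available in the approximate regime. Because the stability constant in Proposition \ref{prop:Y_stability2} depends (exponentially) on the Lipschitz constants of the controls, conditions \ref{item:T_small}--\ref{item:small_k} must be checked to deliver $c<1$ on the a priori ball determined by $\max\big([\phi^\star]_1,\,\sup_{m,\om}[\widetilde{\phi}^m]_1\big)$, rather than merely on the self-consistent bound produced in the exact setting. In effect, the hardest part of the exact analysis---propagating uniform Lipschitz regularity of the iterates---has been shifted into the standing hypothesis, so that once this uniformity is in place the perturbation part of the argument is routine; the only genuinely new ingredient is the clean identification of the propagation constant $\tau C_{\bar{b}}$, forced by the linear dependence of $\partial_a H^{\textrm{re}}$ on the adjoint variable.
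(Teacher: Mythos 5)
Your proposal is correct, and it rests on the same two ingredients as the paper's proof: the one-step prox-gradient estimate (Lemma \ref{Lemma_iteration_1}, here specialized so that only the adjoint variable differs, which produces the clean factor $\tau C_{\bar{b}}$), and the control-stability estimate for the adjoint process (Proposition \ref{prop:Y_stability2}), with the uniform regularity of the iterates supplied by the standing hypothesis. The route differs in the decomposition. The paper runs the recursion on the distance to the \emph{exact sequence}: with $\phi^m$ the exact iterates started from the same $\phi^0$, it proves $|\phi^{m+1}-\widetilde{\phi}^{m+1}|_0\le c\,|\phi^m-\widetilde{\phi}^m|_0+\tau C_{\bar{b}}\,|\cY^{\widetilde{\phi}^m}-\widetilde{\cY}^{\widetilde{\phi}^m}|_0$, applying Proposition \ref{prop:Y_stability2} to the pair $(\phi^m,\widetilde{\phi}^m)$, unrolls this, and only then invokes the triangle inequality together with the exact convergence $|\phi^m-\phi^\star|_0\le c^m|\phi^0-\phi^\star|_0$ from Theorem \ref{thm:convergence_stationary}. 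You instead run the recursion directly on $e_m=|\widetilde{\phi}^m-\phi^\star|_0$, inserting one exact step $\Phi(\widetilde{\phi}^m)$ and using the fixed-point identity $\Phi(\phi^\star)=\phi^\star$ (the paper's \eqref{limit2}); this produces the stated bound in one pass, with no final triangle inequality. The arguments are of equal strength; the only substantive consequence of your choice is which pair of controls enters Proposition \ref{prop:Y_stability2} and hence which constants appear in the quantitative smallness condition behind $c<1$: the paper's condition \eqref{eq:constraction_condition2} involves the bounds $\widetilde{C}$, $\widetilde{L}$ and $C_\sigma L_Y(\widetilde{L})$ attached to the approximate iterates (since it takes $\phi'=\widetilde{\phi}^m$), whereas your contraction step with $\phi'=\phi^\star$ involves $C_{(\phi^0)}$, $L_{(\phi^0)}$ and $C_\sigma L_Y(L_{(\phi^0)})$ from the exact theory together with $\widetilde{L}$. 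Both conditions are of the same type and are what conditions \ref{item:T_small}--\ref{item:small_k} are meant to guarantee, a point you correctly flag in your final paragraph rather than leave implicit.
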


{
  
 By Lemma \ref{Lemma_iteration_1},
the condition $\sup_{m\in \sN}(|\widetilde{\phi}_t^m|_0+[\widetilde{\phi}^m]_1)<\infty$
holds if 
 there exists $C>0$ such that for all  
$m\in \sN$, $\om\in \Om$,
 and $(t,x,x') \in [0,T]\t \sR^n \t \sR^n$,
 $
|\widetilde{\mathcal{Y}}^{\widetilde{\phi}^m}_t(x,\om)|
\le C$ 
and 
$  
|\widetilde{\mathcal{Y}}^{\widetilde{\phi}^m}_t(x,\om)-\widetilde{\mathcal{Y}}^{\widetilde{\phi}^m}_t(x',\om)|
\le C{|x-x'|}$.
This highlights the fact that 
the numerical  approximations of the gradient directions 
must be sufficiently regular in space,
to prevent a spatial oscillation 
of the iterates 
and   to ensure the  convergence of  the iterates.
}%
This is a reasonable assumption 
as  the exact gradient directions  $( {\mathcal{Y}}^{ {\phi}^m})_{m\in \sN_0} $
enjoy these properties 
(see Proposition \ref{prop:Y_stability}
and  Theorems
\ref{prop:Y_bound} and 
 \ref{thm:lipschitz_iterates}),
and 
any reasonable
approximation 
$\widetilde{\mathcal{Y}}^{\widetilde{\phi}^m}$
of 
$
{\mathcal{Y}}^{\widetilde{\phi}^m}$
should retain these properties;
see e.g., 
\cite{barles2002convergence} for  approximation schemes that preserve
boundedness and Lipschitz continuity of 
exact solutions. 
{
 It would be interesting to derive explicit conditions on   model coefficients 
to ensure the required regularity of 
$(\widetilde{\phi}^m)_{m\in \sN_0}$. 
This would entail imposing precise dependencies of 
 the Lipschitz regularity of 
$\widetilde{\mathcal{Y}}^{\widetilde{\phi}^m}$
on  the semi-norm  $[\widetilde{\phi}^m]_1$,
and is left for future research.

}

\section{Proofs}

Throughout the rest of this work, we establish 
estimates with explicit dependence  on the constants 
$T, \rho,C_{fx}, L_{fx},  L_{fa}, \mu, \nu, C_g, L_g, \kappa_{\hat{b}}, C_{\bar{b}}$,
which are important for the convergence of  \eqref{eq:phi_update}. 
For notational simplicity, we 
{
write $(x)_{+}=\max(0,x)$ for all $x \in \mathbb{R}$,}
and  denote by  $C>0$ 
a generic constant, which depends  on the remaining constants appearing in  (H.\ref{assum:pgm}), and may take a different value at each occurrence.
We shall refer to $C>0$ as an absolute constant if its value is independent of the constants in (H.\ref{assum:pgm}).
Dependence of $C$ on important quantities will be indicated explicitly by $C_{(\cdot)}$, e.g., $C_{(\phi)}$ for $\phi\in \cV_\bA$.


\subsection{Auxiliary lemmas}

In this section, we present some technical lemmas  used in the subsequent analysis.  The following lemma establishes 
stability of  SDEs with non-Lipschitz drift coefficients.
The upper bounds involve explicit dependence on  relevant constants, whose proof is given in Appendix 
\ref{appendix:technical}.

\begin{Lemma}\label{forward:apriori_p}
Let   
$T>0$,
and for each $i=1,2$,
let $\mu_{i}\in \sR$,
  $\nu_{i}\ge 0$, let  
  { $b^i:[0,T]\t \sR^n\to \sR^{
  n}$}
 and ${\sigma}^i:[0,T]\t \sR^n\to \sR^{n\t d}$ 
  be measurable functions such that for all $t\in [0,T]$ and $x,x'\in \sR^n$,
$\sup_{(t,x)\in [0,T]\t \sR^n}\frac{|b^i_t(x)|+|\sigma^i_t(x)|}{1+|x|}<\infty$, 
 $\la x-x',b^i_t(x)-b^i_t(x')\ra  \le \mu_i |x-x'|^2$,
 and 
 $|\sigma^i_t(x)-\sigma^i_t(x')| \le \nu_i |x-x'|$, 
 and for each  $(t,x)\in [0,T]\t  \sR^n$,
 let $X^{t,x,i}\in \cS^2(t,T; \sR^n)$ satisfy
\begin{equation}
\d X_s= b^i_s(X_s)\, \d s +\sigma^i_s(X_s)\, \d W_s, 
\q s\in [t,T]; 
\quad X_t = x.  
\end{equation}
Then for all $p  \geq 2$ there exists an absolute constant $C_{(p)}$
 such that   for all $t\in [0,T]$, $x_1,x_2\in \sR^n$,
\begin{align*}
\|X^{t,x_1,1} - {X}^{t,x_2,2}\|_{\cS^p }
& \le 
C_{(p)} e^{T(2\mu_1+C_{(p)}\nu_1^2)_{+}}
\Big(|x_1-x_2|+\sqrt{T} \| b^1({X}^{t,x_2,2})-{b}^2({X}^{t,x_2,2}) \|_{\cH^p }
\\
&\quad 
+\| \sigma^1({X}^{t,x_2,2})-\sigma^2({X}^{t,x_2,2}) \|_{\cH^p }\Big). 
\end{align*}
If we further assume that  $\sigma^1 \equiv \sigma^2$, then   for all $(t,x) \in
[0,T]\t \sR^n$,  
\begin{align}
\label{eq:X_b_difference}
    \mathbb{E} \left[ |X^{t,x,1}_T - X^{t,x,2}_T|^2 \right] \leq \mathbb{E} \left[ \int_{t}^{T} |b_s^{1}(X^{t,x,2}_{s})-b^{2}_{s}(X^{t,x,2}_{s})|^2 e^{(T-s)(2\mu_1 +\nu_1^2 +1)} \, \mathrm{d}s \right].
\end{align}
\end{Lemma}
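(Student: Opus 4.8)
The plan is to prove the two claims by comparison/energy estimates for the difference process $\Delta_s \coloneqq X^{t,x_1,1}_s - X^{t,x_2,2}_s$, using the one-sided Lipschitz (monotonicity) bound on the drift rather than a full Lipschitz bound, since $b^1$ is only assumed one-sided Lipschitz with constant $\mu_1$. First I would write the SDE satisfied by $\Delta_s$, splitting the drift difference as $b^1_s(X^{t,x_1,1}_s)-b^2_s(X^{t,x_2,2}_s) = \big(b^1_s(X^{t,x_1,1}_s)-b^1_s(X^{t,x_2,2}_s)\big) + \big(b^1_s(X^{t,x_2,2}_s)-b^2_s(X^{t,x_2,2}_s)\big)$, so that the first bracket is controlled by monotonicity and the second is the ``source'' term measuring the discrepancy of the coefficients along the fixed trajectory $X^{t,x_2,2}$. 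The diffusion difference is treated analogously, using the Lipschitz constant $\nu_1$ on the genuinely-varying part and isolating $\sigma^1-\sigma^2$ evaluated at $X^{t,x_2,2}$ as a second source term.

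For the first (general $\mathcal S^p$) estimate I would apply It\^o's formula to $|\Delta_s|^p$ (or to $|\Delta_s|^2$ followed by $(\,\cdot\,)^{p/2}$), take expectations to kill the martingale part, and use the monotonicity inequality $\langle \Delta_s, b^1_s(X^{t,x_1,1}_s)-b^1_s(X^{t,x_2,2}_s)\rangle \le \mu_1|\Delta_s|^2$ to generate the exponential factor $e^{T(2\mu_1+C_{(p)}\nu_1^2)_+}$; the $\nu_1^2$ appears from the Burkholder--Davis--Gundy treatment of the stochastic integral and the It\^o correction from the diffusion term. The source terms contribute the $\sqrt T\|b^1(X^{t,x_2,2})-b^2(X^{t,x_2,2})\|_{\mathcal H^p}$ and $\|\sigma^1(X^{t,x_2,2})-\sigma^2(X^{t,x_2,2})\|_{\mathcal H^p}$ pieces (the $\sqrt T$ from H\"older in the $ds$-integral of the drift discrepancy). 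A Gr\"onwall-type argument then closes the estimate, yielding the absolute constant $C_{(p)}$ depending only on $p$.

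For the second, sharper estimate I would specialise to $p=2$, $\sigma^1\equiv\sigma^2$, and $x_1=x_2=x$, so the initial-condition term and both diffusion source terms vanish, leaving only the drift discrepancy $b^1_s(X^{t,x,2}_s)-b^2_s(X^{t,x,2}_s)$ as a source. Applying It\^o to $|\Delta_s|^2$, using monotonicity for the $b^1$-difference and the Lipschitz bound $\nu_1$ for the (now identical) diffusion coefficients, I would obtain a differential inequality of the form $\frac{d}{ds}\mathbb E|\Delta_s|^2 \le (2\mu_1+\nu_1^2+1)\mathbb E|\Delta_s|^2 + \mathbb E|b^1_s(X^{t,x,2}_s)-b^2_s(X^{t,x,2}_s)|^2$, where the extra $+1$ absorbs the cross term via Young's inequality $2\langle \Delta_s,\,\text{source}\rangle \le |\Delta_s|^2 + |\text{source}|^2$. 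Integrating this linear ODE inequality with the variation-of-constants (Gr\"onwall) formula from $s=T$ backward gives exactly \eqref{eq:X_b_difference}, with the exponential weight $e^{(T-s)(2\mu_1+\nu_1^2+1)}$ sitting inside the time integral.

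The main obstacle I anticipate is the well-posedness and integrability bookkeeping needed to justify the It\^o/expectation manipulations under merely one-sided Lipschitz drift with linear growth: one must first secure the a priori moment bounds on $X^{t,x,i}$ (guaranteed by the stated linear-growth hypothesis $\sup\frac{|b^i_t(x)|+|\sigma^i_t(x)|}{1+|x|}<\infty$ and $X^{t,x,i}\in\mathcal S^2$) so that the local martingale from BDG is a true martingale and the Gr\"onwall constant is finite. The delicate point is tracking the sharp exponential rates — keeping $2\mu_1$ (not $2|\mu_1|$) so that strong dissipativity $\mu_1<0$ genuinely improves the bound, and pinning the diffusion contribution to the specific form $C_{(p)}\nu_1^2$ (resp.\ $\nu_1^2$ for $p=2$) — which matters because these constants feed directly into the contraction rate $c$ of Theorem \ref{thm:convergence_stationary_formal} and underpin the role of $\kappa_{\hat b}$ in condition \ref{item:small_k}.
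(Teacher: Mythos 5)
Your proposal is correct and follows essentially the same route as the paper: the same splitting of the drift and diffusion differences into a monotonicity/Lipschitz part plus source terms along $X^{t,x_2,2}$, It\^o's formula applied to $|\Delta_s|^p$, BDG plus Young's inequality for the stochastic integral (which is what produces the $\nu_1^2$ contribution — note the martingale term does not simply vanish in expectation once the supremum is inside, as you implicitly acknowledge), and Gr\"onwall to close. For the second estimate, your differential-inequality-plus-variation-of-constants argument is the same computation the paper performs by applying It\^o directly to $e^{-s\beta}|\Delta_s|^2$ with $\beta = 2\mu_1+\nu_1^2+1$.
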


The following lemma 
establishes 
stability of  BSDEs with monotone nonlinearity. 
It 
 has been proved 
 in 
\cite{pardoux1998backward} for $p=1$
and in 
\cite[Proposition 3.2]{briand2000bsdes}
for $p>1$. 


\begin{Lemma}\label{BSDE:apriori}
For each $i=1,2$
and $t\in [0,T]$, 
let 
$\xi^i\in L^2(\cF_T;\sR^n)$,
$\gamma_i\ge 0$, $\mu_i\in \sR$, 
 $f^i:[t,T]\t \Om\t  \sR^n\t \sR^{n\t d}\to \sR^n$ 
 be 
such that 
for all  $(y,z)\in  \sR^n\t \sR^{n\t d}$,
$(f^i_s(\cdot,y,z))_{s\in [t,T]}$ is progressively measurable, and
for all  $(s,\om)\in [t,T]\t \Om$, $y,y'\in  \sR^n$ and $z,z'\in \sR^{n\t d}$,
$|f^i_s(\om,y,z)-f^i_s(\om,y,z')|\le \gamma_i|z-z'|$
and $\la y-y',f^i_s(\om,y,z)-f^i_s(\om,y',z)\ra \le \mu_i|y-y'|^2$, and let $(Y^i,Z^i)\in \cS^2(t,T;\sR^n)\t  \cH^2(t,T;\sR^{n\t d})$ satisfy
$$
\d Y_s
 = - f^i_s(\cdot, Y_s,Z_s)  \, \mathrm{d}s+ Z_s \, \mathrm{d}W_s, \q s\in [t,T]; \quad Y_T = \xi^i.
$$
Then for all $p\ge 1$ and  $\eps\in (0,1)$, there exists an absolute constant $C_{(p,\eps)}>0$
such that 
for  all 
 $t\in [0,T]$ and
 $\a\ge \eps^{-1}\gamma_1^2+2\mu_1$,
\begin{align*}
&\sE\bigg[\sup_{s\in [t,T]}e^{p\a s}|Y^1_s-Y^2_s|^{2p}
+\bigg(\int_t^Te^{\a s}|Z^1_s-Z^2_s|^2\, \d s\bigg)^p\bigg]
\\
&\le
C_{(p,\eps)}
\sE\bigg[ e^{p\a T}|\xi^1-\xi^2|^{2p}
+\bigg(\int_t^Te^{\frac{\a}{2}s}
|f^1_s(\cdot, Y^2_s,Z^2_s)-f^2_s(\cdot, Y^2_s,Z^2_s)|\,\d s\bigg)^{2p}\bigg].
\end{align*}
\end{Lemma}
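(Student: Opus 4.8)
The plan is to reduce the whole statement to a single weighted energy estimate for the difference process and then lift it to the $L^{2p}$ level by the Burkholder--Davis--Gundy (BDG) inequality. First I would set $\delta Y=Y^1-Y^2$, $\delta Z=Z^1-Z^2$, $\delta\xi=\xi^1-\xi^2$, and introduce the \emph{source} term $\Delta f_s\coloneqq f^1_s(\cdot,Y^2_s,Z^2_s)-f^2_s(\cdot,Y^2_s,Z^2_s)$, so that $(\delta Y,\delta Z)$ solves the BSDE with terminal value $\delta\xi$ and driver $\cD_s\coloneqq f^1_s(\cdot,Y^1_s,Z^1_s)-f^2_s(\cdot,Y^2_s,Z^2_s)$. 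The key structural observation is the one-sided decomposition
\[
\cD_s=\big(f^1_s(\cdot,Y^1_s,Z^1_s)-f^1_s(\cdot,Y^2_s,Z^1_s)\big)+\big(f^1_s(\cdot,Y^2_s,Z^1_s)-f^1_s(\cdot,Y^2_s,Z^2_s)\big)+\Delta f_s,
\]
where the first bracket is controlled through the monotonicity constant, $\langle \delta Y_s,f^1_s(\cdot,Y^1_s,Z^1_s)-f^1_s(\cdot,Y^2_s,Z^1_s)\rangle\le \mu_1|\delta Y_s|^2$, and the second through the Lipschitz constant, $|f^1_s(\cdot,Y^2_s,Z^1_s)-f^1_s(\cdot,Y^2_s,Z^2_s)|\le\gamma_1|\delta Z_s|$. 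This explains the asymmetry in the statement: only $\mu_1$ and $\gamma_1$ (never the constants of $f^2$) enter the admissible range of $\alpha$.

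Next I would apply It\^o's formula to $s\mapsto e^{\alpha s}|\delta Y_s|^2$ and integrate over $[s,T]$ to obtain
\[
e^{\alpha s}|\delta Y_s|^2+\int_s^T e^{\alpha r}|\delta Z_r|^2\,\d r+\alpha\int_s^T e^{\alpha r}|\delta Y_r|^2\,\d r = e^{\alpha T}|\delta\xi|^2+2\int_s^T e^{\alpha r}\langle \delta Y_r,\cD_r\rangle\,\d r - M_s,
\]
with $M_s=2\int_s^T e^{\alpha r}\langle\delta Y_r,\delta Z_r\,\d W_r\rangle$. Inserting the decomposition and applying Young's inequality to the cross term, $2\gamma_1|\delta Y_r||\delta Z_r|\le \eps^{-1}\gamma_1^2|\delta Y_r|^2+\eps|\delta Z_r|^2$, the hypothesis $\alpha\ge \eps^{-1}\gamma_1^2+2\mu_1$ makes the coefficient of $\int e^{\alpha r}|\delta Y_r|^2\,\d r$ nonnegative and leaves the coefficient $(1-\eps)>0$ in front of $\int e^{\alpha r}|\delta Z_r|^2\,\d r$. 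The residual source contribution $2\int_s^T e^{\alpha r}|\delta Y_r||\Delta f_r|\,\d r$ I would estimate by $2\sup_r\big(e^{\alpha r/2}|\delta Y_r|\big)\int_s^T e^{\alpha r/2}|\Delta f_r|\,\d r$, which accounts for the half-weight $e^{\alpha r/2}$ on $\Delta f$ appearing in the conclusion. Taking expectations and using that $M$ is a true martingale (after a standard localisation) yields the $p=1$ estimate as in \cite{pardoux1998backward}.

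The hard part will be lifting this to general $p\ge1$, following \cite[Proposition 3.2]{briand2000bsdes}: I would raise the weighted inequality to the power $p$ and estimate $\sE[\sup_s(e^{\alpha s}|\delta Y_s|^2)^p]$ and $\sE[(\int e^{\alpha r}|\delta Z_r|^2\,\d r)^p]$ simultaneously. The delicate term is the martingale part: BDG bounds its supremum, up to an absolute constant $C_{(p)}$, by $\sE[(\int e^{2\alpha r}|\delta Y_r|^2|\delta Z_r|^2\,\d r)^{p/2}]\le C_{(p)}\sE[\sup_r(e^{\alpha r}|\delta Y_r|^2)^{p/2}(\int e^{\alpha r}|\delta Z_r|^2\,\d r)^{p/2}]$, and a further Young inequality turns this into a small multiple of $\sE[\sup_r(e^{\alpha r}|\delta Y_r|^2)^p]$ plus a multiple of $\sE[(\int e^{\alpha r}|\delta Z_r|^2\,\d r)^p]$. \emph{The main obstacle is the absorption step}: the Young parameters must be chosen so that the coefficients of both $\sE[\sup_r(e^{\alpha r}|\delta Y_r|^2)^p]$ and $\sE[(\int e^{\alpha r}|\delta Z_r|^2\,\d r)^p]$ that are fed back to the right-hand side are strictly below $1$, \emph{uniformly in the data}, so that they can be subtracted to the left. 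This closes precisely because the weighting $e^{\alpha\cdot}$ together with $\alpha\ge\eps^{-1}\gamma_1^2+2\mu_1$ has already eliminated every data-dependent term, leaving a constant $C_{(p,\eps)}$ depending only on $p$ and $\eps$. Once absorption succeeds, combining with the source bound from the previous step gives the claimed inequality; since every step is pure It\^o calculus plus BDG, it holds verbatim for any $t\in[0,T]$, which delivers the asserted uniformity in $t$.
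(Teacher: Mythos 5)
Your first half is right and matches the skeleton of the argument in the references the paper actually relies on: the paper does not prove this lemma itself, but quotes it from Pardoux (1998) for $p=1$ and Briand et al.\ (Proposition 3.2) for $p>1$, and those proofs begin exactly as you do — difference BSDE, splitting the driver into a $\mu_1$-monotone part in $y$, a $\gamma_1$-Lipschitz part in $z$ and the source $\Delta f$, It\^o's formula for $e^{\a s}|\delta Y_s|^2$, Young's inequality under $\a\ge \eps^{-1}\gamma_1^2+2\mu_1$, and the half-weight $e^{\a r/2}$ on $\Delta f$.

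The genuine gap is your absorption step for the $L^p$ lift, and it is not a tuning issue. Set $A=\sup_s e^{\a s}|\delta Y_s|^2$, $B=\int_t^T e^{\a r}|\delta Z_r|^2\,\d r$, $a=\sE[A^p]$, $b=\sE[B^p]$. After BDG the martingale contribution is bounded by $C_{(p)}\sE[A^{p/2}B^{p/2}]$, and Young gives, for every $\lambda>0$, the bound $\tfrac{C_{(p)}\lambda}{2}\,a+\tfrac{C_{(p)}}{2\lambda}\,b$. The product of these two feedback coefficients equals $C_{(p)}^2/4$ \emph{for every} $\lambda$: making one small forces the other proportionally large. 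Closing your scheme requires both feedback coefficients to lie strictly below the corresponding left-hand coefficients, whose product is at most of order $(1-\eps)^p<1$; this would force $C_{(p)}<2$, which is false — the martingale in the identity already carries a factor $2$, and the BDG constant itself is $\ge 1$ and grows with $p$. So the obstruction is the \emph{absolute} BDG constant, not any data-dependence; your remark that the weight $e^{\a\cdot}$ has removed all data-dependent terms is true but does not help. The same defect infects the companion bound for $b$: absorbing the $b$-part of the BDG term there forces a large multiple of $a$ on the right, so one ends up with a system $a\le D+\eta a+\Lambda b$, $b\le D+K a$ with $\Lambda K\ge C_{(p)}^2/4>1$, which never closes.

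The standard repair — and what the cited proofs actually do — is asymmetric and two-step. For $p>1$: in the It\^o identity drop the nonnegative $\delta Z$- and $\delta Y$-terms and take conditional expectation given $\cF_s$, which annihilates the martingale \emph{exactly}, with no constant; this yields $e^{\a s}|\delta Y_s|^2\le \sE[\Xi\mid\cF_s]$ with $\Xi=e^{\a T}|\delta\xi|^2+2\sup_r(e^{\a r/2}|\delta Y_r|)\int_t^T e^{\a r/2}|\Delta f_r|\,\d r$, and Doob's $L^p$ maximal inequality then bounds $a$ by $C_{(p)}\sE[\Xi^p]$, where the only feedback of $a$ comes through the source term with an \emph{arbitrarily small} Young weight — so $a$ is controlled by the data alone, with no $b$ on the right. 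The bound for $b$ then follows from the identity at $s=t$ plus BDG, absorbing only the $b$-part. For $p=1$, where Doob's inequality is unavailable, reverse the order: plain expectation of the identity (the martingale has zero mean after localization) bounds $b$ by data plus an arbitrarily small multiple of $a$, and then BDG bounds $a$ with a fixed multiple of $b$, so substitution closes. In both regimes one runs the argument under a localization making $a,b$ finite and concludes by Fatou's lemma.
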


The next lemma estimates the monotonicity and Lipschitz continuity of $\p_x H$. The proof follows 
directly from \eqref{eq:Hamiltonian}
 and (H.\ref{assum:pgm}),
 and is given in Appendix \ref{appendix:technical}.
Due to 
the presence of $(\p_x \hat{b}_t(x))^\top y$ and the unboundedness  of $\p_x \hat{b}$,   $\p_x H$  is not  globally Lipschitz continuous in $y$.

\begin{Lemma}
\label{lemma:H_gradient_estimate}
Suppose (H.\ref{assum:pgm}) holds,
and let $H$ be defined by 
\eqref{eq:Hamiltonian}.
Then for all $t\in [0,T]$, $x,x'\in \sR^n$, $a,a'\in \bA$, $y,y' \in \sR^n$ and $z,z'\in \sR^{n\t d}$,
\begin{align}
&    \la y-y',
\partial_x H_t(x, a,y,z)
-\partial_x H_t(x, a,y',z)\ra 
\le (\kappa_{\hat{b}}-{
\rho}+L_{\bar{b}})|y-y'|^2,
\\
&|\partial_x H_t(x, a,y,z)
-\partial_x H_t(x', a',y,z')|
\nb
\\
& \q 
\le 
\big(L_{\hat{b}}|x-x'|+
L_{\bar{b}}(|x-x'|+|a-a'|)\big)| y| 
\nb
+ L_{\sigma}|x-x'|
|z|
\\
&\q
\q +L_{\sigma}|z-z'|
+L_{fx}(|x-x'|+|a-a'|).
\end{align}
\end{Lemma}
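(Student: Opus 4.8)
The plan is to differentiate the Hamiltonian \eqref{eq:Hamiltonian} explicitly and then read off both estimates term by term from the structural conditions in (H.\ref{assum:pgm}). First I would use the affine decomposition \eqref{eq:affine} together with the differentiability hypotheses on $\hat{b},\bar{b},f,\sigma$ to write, for all $(t,x,a,y,z)$,
$$
\p_x H_t(x,a,y,z) = (\p_x \hat{b}_t(x))^\top y + (\p_x(\bar{b}_t(x)a))^\top y - \rho y + \p_x f_t(x,a) + (\p_x \sigma_t(x))^\top z,
$$
where $\p_x(\bar{b}_t(x)a)$ denotes the Jacobian of $x\mapsto \bar{b}_t(x)a$. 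Both claimed bounds then reduce to estimating the five summands separately.

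For the monotonicity estimate I would fix $(t,x,a,z)$ and vary $y,y'$, so that the $\p_x f$ and $\p_x\sigma$ terms cancel and only the three $y$-linear terms survive. Using $\la w,M^\top w\ra=\la w,Mw\ra$, this gives
$$
\la y-y', \p_x H_t(x,a,y,z)-\p_x H_t(x,a,y',z)\ra = \la y-y',(\p_x \hat{b}_t(x))(y-y')\ra + \la y-y',(\p_x(\bar{b}_t(x)a))(y-y')\ra - \rho|y-y'|^2.
$$
The first quadratic form is controlled by $\kappa_{\hat{b}}|y-y'|^2$ once I upgrade the integrated dissipativity in \eqref{eq:hat_b_lipschitz} to the pointwise bound $\la v,(\p_x \hat{b}_t(x))v\ra\le \kappa_{\hat{b}}|v|^2$, which follows by setting $x'=x+\eps v$ in $\la x-x',\hat{b}_t(x)-\hat{b}_t(x')\ra\le \kappa_{\hat{b}}|x-x'|^2$, dividing by $\eps^2$ and letting $\eps\to 0$. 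The second form is bounded by $L_{\bar{b}}|y-y'|^2$ because $x\mapsto\bar{b}_t(x)a$ is $L_{\bar{b}}$-Lipschitz by \eqref{eq:bar_b_lipschitz}, so its Jacobian has operator norm at most $L_{\bar{b}}$; summing yields the constant $\kappa_{\hat{b}}-\rho+L_{\bar{b}}$.

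For the Lipschitz estimate I would subtract $\p_x H_t(x',a',y,z')$ from $\p_x H_t(x,a,y,z)$ at fixed $y$, so the two $-\rho y$ contributions cancel exactly and four differences remain. The $\hat{b}$ difference is bounded by $L_{\hat{b}}|x-x'||y|$ via the Lipschitz bound on $\p_x\hat{b}$ in \eqref{eq:hat_b_lipschitz}; the $\bar{b}$ difference by $L_{\bar{b}}(|x-x'|+|a-a'|)|y|$ via the estimate on $|\p_x\bar{b}_t(x)a-\p_x\bar{b}_t(x')a'|$ in \eqref{eq:bar_b_lipschitz}; the $f$ difference by $L_{fx}(|x-x'|+|a-a'|)$ from \eqref{eq:f_x_lipschitz}. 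For the $\sigma$ difference I would add and subtract $(\p_x\sigma_t(x'))^\top z$, bounding the first part by $L_\sigma|x-x'||z|$ via the Lipschitz bound on $\p_x\sigma$ and the second by $L_\sigma|z-z'|$ using $|\p_x\sigma_t(x')|\le L_\sigma$ (a consequence of the $L_\sigma$-Lipschitz continuity of $\sigma_t$ in \eqref{eq:sigma}). The only genuinely delicate point — the main obstacle — is the passage from the global conditions in (H.\ref{assum:pgm}) to the pointwise Jacobian bounds (the dissipativity upgrade above and the operator-norm bounds derived from Lipschitz continuity), together with keeping the transpose and tensor conventions for $\p_x(\bar{b}_t(x)a)$ consistent; everything else is a direct application of the triangle inequality and Cauchy--Schwarz.
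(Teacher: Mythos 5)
Your proof is correct and takes essentially the same route as the paper: both differentiate $H$ explicitly via the affine structure \eqref{eq:affine}, observe that the $\p_x f$ and $\sigma$ terms cancel in the monotonicity estimate while the $\rho y$ terms cancel in the Lipschitz estimate, and bound the remaining differences term by term using \eqref{eq:f_x_lipschitz}, \eqref{eq:hat_b_lipschitz}, \eqref{eq:bar_b_lipschitz} and \eqref{eq:sigma}. The only difference is that you spell out the steps the paper leaves implicit (upgrading the integrated dissipativity of $\hat{b}$ to the pointwise Jacobian bound, and deriving operator-norm bounds on $\p_x(\bar b_t(x)a)$ and $\p_x\sigma_t$ from Lipschitz continuity), which is a welcome addition rather than a deviation.
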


We then  present a Lipschitz estimate for the proximal 
gradient mapping \eqref{eq:phi_update}.

\begin{Lemma}\label{Lemma_iteration_1}
Suppose (H.\ref{assum:pgm}) holds,
and 
let 
$H^{\textrm{re}}
$ be defined as in \eqref{eq:Hamiltonian_re}.
Then for all $t\in [0,T]$, $x,x'\in \sR^n$, $a,a'\in\bA$, $y,y'\in \sR^n$
and 
$\tau \in (0, \frac{2}{\mu + L_{fa}}\wedge \frac{1}{\nu}]$,
\begin{align*}
&|\prox_{\tau \ell}(a-\tau\p_a H_t^{\textrm{re}}(x,a,y))-
\prox_{\tau \ell}(a'-\tau\p_a H_t^{\textrm{re}}(x',a',y'))|
\\
&\le 
\left(
1 - \tau 
\frac{1}{2}
\left(\frac{\mu L_{fa}}{\mu + L_{fa}}
+\nu
\right)
\right)
|a - a'|
+\tau C_{\bar{b}}|y-y'|+ \tau
(L_{\bar{b}} | y'|
+L_{fa})|x-x'|.
\end{align*}

\end{Lemma}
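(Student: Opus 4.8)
The plan is to decompose the map $a\mapsto \prox_{\tau\ell}\big(a-\tau\p_a H_t^{\textrm{re}}(x,a,y)\big)$ into its two constituent operations and to exploit the convexity structure imposed by (H.\ref{assum:pgm}) on each separately. Using the affine form \eqref{eq:affine}, the drift enters only linearly in $a$, so that
\begin{align*}
\p_a H_t^{\textrm{re}}(x,a,y)=\bar{b}_t(x)^\top y+\p_a f_t(x,a),
\end{align*}
and the $-\rho\la x,y\ra$ term drops out. Thus the gradient step $a\mapsto a-\tau\p_a H_t^{\textrm{re}}(x,a,y)$ is a perturbation of the pure gradient step on $f_t(x,\cdot)$, which by \eqref{eq:strong_convex_f} and \eqref{eq:f_a_lipschitz} is $\mu$-strongly convex with $L_{fa}$-Lipschitz gradient. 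I would then combine a contraction estimate for this step with the contraction property of $\prox_{\tau\ell}$ coming from the $\nu$-strong convexity of $\ell$ in \eqref{eq:strong_convex_l}.

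For the gradient step, I first fix $x$ and treat the dependence on $a$ alone. The key ingredient is the standard co-coercivity inequality for a $\mu$-strongly convex function with $L_{fa}$-Lipschitz gradient (see e.g.\ \cite{nesterov2003introductory}),
\begin{align*}
\la a-a',\p_a f_t(x,a)-\p_a f_t(x,a')\ra\ge \tfrac{\mu L_{fa}}{\mu+L_{fa}}|a-a'|^2+\tfrac{1}{\mu+L_{fa}}|\p_a f_t(x,a)-\p_a f_t(x,a')|^2.
\end{align*}
Expanding $|(a-a')-\tau(\p_a f_t(x,a)-\p_a f_t(x,a'))|^2$ and inserting this bound, the cross term absorbs the gradient-squared term precisely when $\tau\le \tfrac{2}{\mu+L_{fa}}$, leaving $(1-2\tau r)|a-a'|^2$ with $r\coloneqq\tfrac{\mu L_{fa}}{\mu+L_{fa}}$; taking square roots and using $\sqrt{1-2\tau r}\le 1-\tau r$ gives the contraction factor $1-\tau r$ in the $a$-variable. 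To pass to genuine arguments $(x,a,y)$ and $(x',a',y')$, I would split $\p_a f_t(x,a)-\p_a f_t(x',a')$ into $(\p_a f_t(x',a)-\p_a f_t(x',a'))+(\p_a f_t(x,a)-\p_a f_t(x',a))$, bounding the first bracket by the contraction just obtained and the second by $L_{fa}|x-x'|$ via \eqref{eq:f_a_lipschitz}, and handle $\bar{b}_t(x)^\top y-\bar{b}_t(x')^\top y'=\bar{b}_t(x)^\top(y-y')+(\bar{b}_t(x)-\bar{b}_t(x'))^\top y'$ using $|\bar{b}_t(x)|\le C_{\bar{b}}$ and $|\bar{b}_t(x)-\bar{b}_t(x')|\le L_{\bar{b}}|x-x'|$ from \eqref{eq:b_bound}--\eqref{eq:bar_b_lipschitz}. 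The triangle inequality then yields
\begin{align*}
\big|\big(a-\tau\p_a H_t^{\textrm{re}}(x,a,y)\big)-\big(a'-\tau\p_a H_t^{\textrm{re}}(x',a',y')\big)\big|\le (1-\tau r)|a-a'|+\tau C_{\bar{b}}|y-y'|+\tau(L_{\bar{b}}|y'|+L_{fa})|x-x'|.
\end{align*}

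Finally I would invoke the contraction property of the proximal map. Writing the optimality condition $u-\prox_{\tau\ell}(u)\in\tau\p\ell(\prox_{\tau\ell}(u))$ and using the $\nu$-strong monotonicity of $\p\ell$ (equivalent to \eqref{eq:strong_convex_l}), a one-line monotonicity-plus-Cauchy--Schwarz argument shows that $\prox_{\tau\ell}$ is $\tfrac{1}{1+\tau\nu}$-Lipschitz. Applying this to the displayed bound and using $\tfrac{1}{1+\tau\nu}\le 1$ on the $|y-y'|$ and $|x-x'|$ terms reduces the claim to the scalar inequality $\tfrac{1-\tau r}{1+\tau\nu}\le 1-\tfrac{\tau}{2}(r+\nu)$ for the coefficient of $|a-a'|$. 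Cross-multiplying and cancelling, this is equivalent (after dividing by $r+\nu$, which is strictly positive since $\mu+\nu>0$ together with $\mu\le L_{fa}$) to $\tau\nu\le 1$, which holds precisely because $\tau\le\tfrac{1}{\nu}$. The only non-mechanical point is this final algebraic reduction: it is what forces the symmetric gain $\tfrac{1}{2}(r+\nu)$ and pins down the distinct roles of the two stepsize restrictions, $\tau\le\tfrac{2}{\mu+L_{fa}}$ for the gradient-step contraction and $\tau\le\tfrac{1}{\nu}$ for merging the two strong-convexity contributions.
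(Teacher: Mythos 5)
Your proof is correct and follows essentially the same route as the paper's: split off the $a$-contraction via Nesterov's co-coercivity inequality (valid for $\tau\le\frac{2}{\mu+L_{fa}}$), bound the $x$- and $y$-perturbations of $\p_a H^{\textrm{re}}=\bar{b}^\top y+\p_a f$ by the triangle inequality, apply the $\frac{1}{1+\tau\nu}$-Lipschitz property of $\prox_{\tau\ell}$, and merge the two strong-convexity gains through the final algebraic reduction that is exactly where $\tau\nu\le 1$ is needed. The only cosmetic differences (proving the prox contraction by monotonicity of $\p\ell$ rather than citing Rockafellar--Wets, and anchoring the co-coercivity step at $x'$ instead of $x$) do not change the argument.
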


\begin{proof}
For each $\tau>0$,
since
 $\tau \ell$ is proper, lower-semicontinuous and $\tau\nu$-strongly convex (cf.~\eqref{eq:strong_convex_l}),
 by Theorem 12.56
 and Exercise 12.59 in \cite{rockafellar2009variational},
$$|\prox_{\tau \ell}(x)-\prox_{\tau \ell}(y)|\le \frac{1}{1+\tau \nu}|x-y|,
\q \fa x,y\in \sR^k.
$$
 Hence
for any 
$t\in [0,T]$, $x,x'\in \sR^n$, $a,a'\in \bA$ and $y,y'\in \sR^n$,
\begin{align}
\label{eq:prox_difference}
\begin{split}
&|\prox_{\tau \ell}(a-\tau\p_a H_t^{\textrm{re}}(x,a,y))-
\prox_{\tau \ell}(a'-\tau\p_a H_t^{\textrm{re}}(x',a',y'))|
\\
&
\le
\tfrac{1}{1+\tau \nu}
|(a-\tau\p_a H_t^{\textrm{re}}(x,a,y))-
(a'-\tau\p_a H_t^{\textrm{re}}(x',a',y'))|
\\
&\le
\tfrac{1}{1+\tau \nu}
|(a-\tau\p_a H_t^{\textrm{re}}(x,a,y))-
(a'-\tau\p_a H_t^{\textrm{re}}(x,a',y))|
\\
&\q +\tfrac{\tau}{1+\tau \nu} |\p_a H_t^{\textrm{re}}(x,a',y)-\p_a H_t^{\textrm{re}}(x',a',y')|.
\end{split}
\end{align}
We now estimate the two terms in \eqref{eq:prox_difference}
 separately. 
Observe that $\p_a H_t^{\textrm{re}}(x,a,y)=\bar{b}_t(x)^\top y+\p_a f_t(x,a)$
for all $(t,x,a,y)\in [0,T]\t \sR^n\t \bA\t \sR^n$.
Then 
by 
\eqref{eq:b_bound},
\eqref{eq:f_x_lipschitz} and \eqref{eq:f_a_lipschitz},
 the second term 
 in \eqref{eq:prox_difference} can be bounded by:
\begin{align}
\label{eq:prox_difference_term2}
\begin{split}
 |\p_a H_t^{\textrm{re}}(x,a',y)-\p_a H_t^{\textrm{re}}(x',a',y')|
&\le
|\bar{b}_t(x)^\top y-\bar{b}_t(x')^\top y'|
+
|\p_a f_t(x,a')-\p_a f_t(x',a')|
\\
&\le 
C_{\bar{b}}|y-y'|+
(L_{\bar{b}} | y'|
+L_{fa})|x-x'|.
\end{split}
\end{align}
To estimate the first term in \eqref{eq:prox_difference},
observe that 
for all $(t,x,y)\in [0,T]\t\sR^n\t \sR^n$,
by 
\eqref{eq:affine},
\eqref{eq:b_bound},
\eqref{eq:f_a_lipschitz}
and \eqref{eq:strong_convex_f},
$\bA \ni a \mapsto H_t^{\textrm{re}}(x,a,y)\in \sR$ is $\mu$-strongly convex, and  $\bA \ni a \mapsto \partial_a H_t^{\textrm{re}}(x,a,y)\in \sR^k$ is $L_{fa}$-Lipschitz continuous,
which along with 
 \cite[Theorem 2.1.12]{nesterov2003introductory},
implies for all $a, a' \in \bA$,
\begin{align*}
& \left \langle \partial_a H_t^{\textrm{re}}(x,a,y) - \partial_a H_t^{\textrm{re}}(x,a',y), a-a' \right \rangle \\
& \geq \frac{\mu L_{fa}}{\mu +L_{fa}}|a-a'|^2 + \frac{1}{\mu + L_{fa}} | \partial_a H_t^{\textrm{re}}(x,a,y) -  \partial_a H_t^{\textrm{re}}(x,a',y)|^2.
\end{align*}
Hence for all $a,a' \in \bA$ and $(t,x,y)\in [0,T]\t\sR^n\t \sR^n$,
and $\tau\in (0, \frac{2}{\mu + L_{fa}}]$,
\begin{align*}
\begin{split}
    &|(a-\tau\p_a H_t^{\textrm{re}}(x,a,y))-
(a'-\tau\p_a H_t^{\textrm{re}}(x,a',y))|^2
\\
&=
|a-a'|^2- 2\tau \la a-a', \p_a H_t^{\textrm{re}}(x,a,y)-\p_a H_t^{\textrm{re}}(x,a',y)\ra +
\tau^2 | \p_a H_t^{\textrm{re}}(x,a,y)-\p_a H_t^{\textrm{re}}(x,a',y)|^2
\\
&\le 
 \left(1 - 2\tau \frac{\mu L_{fa}}{\mu + L_{fa}} \right)|a - a'|^2
+\tau\left(\tau -\frac{2}{\mu + L_{fa}} \right)| \p_a H_t^{\textrm{re}}(x,a,y)-\p_a H_t^{\textrm{re}}(x,a',y)|^2
\\
&\le \left(1 - 2\tau \frac{\mu L_{fa}}{\mu + L_{fa}} \right)|a - a'|^2.
\end{split}
\end{align*}
Taking the square root of both sides of the above estimate 
and  using   the inequality
 $\sqrt{1- {
 \gamma} \tau}\le 1- {
  \gamma} \tau/2$ for all $ {
  \gamma},\tau\ge 0$ and ${
   \gamma}\tau \le 1$ give that
 \begin{align*}
\begin{split}
    &|(a-\tau\p_a H_t^{\textrm{re}}(x,a,y))-
(a'-\tau\p_a H_t^{\textrm{re}}(x,a',y))|
\le \left(1 - \tau \frac{\mu L_{fa}}{\mu + L_{fa}} \right)|a - a'|.
\end{split}
\end{align*}
This along with 
\eqref{eq:prox_difference}, \eqref{eq:prox_difference_term2}
and 
$\tfrac{\tau}{1+\tau \nu}\le \tau$
shows that 
for all
 $\tau\in (0, \frac{2}{\mu + L_{fa}}]$,
\begin{align*}
&|\prox_{\tau \ell}(a-\tau\p_a H_t^{\textrm{re}}(x,a,y))-
\prox_{\tau \ell}(a'-\tau\p_a H_t^{\textrm{re}}(x',a',y'))|
\\
&\le 
\frac{1}{1+\tau \nu}
\left(
1 - \tau \frac{\mu L_{fa}}{\mu + L_{fa}}
\right)
|a - a'|
+\tau C_{\bar{b}}|y-y'|+ \tau
(L_{\bar{b}} | y'|
+L_{fa})|x-x'|.
\end{align*}
Observe that 
for all  $a,b,\tau \ge 0$ with $0\le \tau b\le 1$, 
$1-\tau a\le (1+\tau b)(1-\tau \frac{a+b}{2})$.
Then setting 
$a=\frac{\mu L_{fa}}{\mu + L_{fa}}$ and $b=\nu$
in the inequality 
shows that the desired estimate holds with 
$\tau\in 
(0, \frac{2}{\mu + L_{fa}}\wedge \frac{1}{\nu}]
$.
\end{proof}

\subsection{Uniform boundedness  
in time}
 

To establish the boundedness of $\phi^m_t(0)$,
we first
 prove  the adjoint processes 
$(Y^{t,x,\phi},Z^{t,x,\phi})$
defined in \eqref{bsde_feedback}
have bounded $p$-th moments.

\begin{Proposition}\label{prop:Y_bound}
Suppose (H.\ref{assum:pgm}) holds.
For each
 $\phi\in \cV_\bA$ and 
$(t,x)\in [0,T]\t \sR^n$, 
let
$(Y^{t,x,\phi},Z^{t,x,\phi})\in \cS^2(t,T;\sR^n)\t \cH^2(t,T;\sR^{n\t d})$ 
be defined by 
\eqref{bsde_feedback}.
Then 
for all $p\ge 1$
there exists $C_{(p)}\ge 0$,
such that
for all 
$\phi\in \cV_\bA$,
$(t,x)\in [0,T]\t \sR^n$,
\begin{align}
\label{eq:u_bound}
&
\sE\bigg[\sup_{s\in [t,T]}e^{p\widetilde{\a} s}|Y^{t,x,\phi}_s|^{2p}
+\bigg(\int_t^Te^{\widetilde{\a} s}|Z^{t,x,\phi}_s|^2\, \d s\bigg)^p\bigg]
\le
C_{(p)}\bigg(
e^{p\widetilde{\a} T}C_g^{2p}
+C_{fx}^{2p}
\bigg(\int_t^Te^{\frac{\widetilde{\a}}{2} s}
\,\d s
\bigg)^{2p}
\bigg),
\end{align}
with $\widetilde{\a}=2(\kappa_{\hat{b}} -{
\rho}+L_{\bar{b}}+L_{\sigma}^2)$.
Consequently,
there exists an absolute constant  $C\ge 0$ such that 
for all  $\phi\in \cV_\bA$ and $(t,x)\in [0,T]\t \sR^n$,
\begin{align}\label{eq:Y_uniform_bdd}
|Y^{t,x,\phi}_t|
\le
C_Y\coloneqq 
C 
(C_g
+C_{fx}T)
e^{(\kappa_{\hat{b}}-{
\rho} +C )_+T}.
\end{align}
\end{Proposition}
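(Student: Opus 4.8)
The plan is to deduce both estimates from the a-priori bound of Lemma \ref{BSDE:apriori}, by comparing $(Y^{t,x,\phi},Z^{t,x,\phi})$ with the trivial solution $(Y^2,Z^2)\equiv(0,0)$ (corresponding to $\xi^2=0$ and $f^2=0$). Writing \eqref{bsde_feedback} in the form required by Lemma \ref{BSDE:apriori}, the driver is $f^1_s(y,z)=\p_x H_s(X^{t,x,\phi}_s,\phi_s(X^{t,x,\phi}_s),y,z)$ and the terminal datum is $\xi^1=\p_x g(X^{t,x,\phi}_T)$. First I would read off the constants from Lemma \ref{lemma:H_gradient_estimate}: its first inequality shows $f^1$ is monotone in $y$ with constant $\mu_1=\kappa_{\hat{b}}-\rho+L_{\bar{b}}$, and its second inequality (taking $x=x'$, $a=a'$, $y=y'$) shows $f^1$ is Lipschitz in $z$ with constant $\gamma_1=L_\sigma$.

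With these constants I would apply Lemma \ref{BSDE:apriori} with $\eps=1/2$, so that the admissible range $\a\ge \eps^{-1}\gamma_1^2+2\mu_1=2L_\sigma^2+2(\kappa_{\hat{b}}-\rho+L_{\bar{b}})$ contains exactly $\widetilde{\a}=2(\kappa_{\hat{b}}-\rho+L_{\bar{b}}+L_\sigma^2)$. Since $(Y^2,Z^2)\equiv 0$, the right-hand side of the lemma collapses to $\sE[e^{p\widetilde{\a}T}|\xi^1|^{2p}+(\int_t^T e^{\frac{\widetilde{\a}}{2}s}|f^1_s(0,0)|\,\d s)^{2p}]$. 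Here $|\xi^1|=|\p_x g(X^{t,x,\phi}_T)|\le C_g$ by \eqref{eq:g_bound}, while from \eqref{eq:Hamiltonian} and \eqref{eq:affine} one has $\p_x H_s(x,a,0,0)=\p_x f_s(x,a)$, so $|f^1_s(0,0)|=|\p_x f_s(X^{t,x,\phi}_s,\phi_s(X^{t,x,\phi}_s))|\le C_{fx}$ by \eqref{eq:f_x_lipschitz}. Substituting these two bounds yields precisely \eqref{eq:u_bound}.

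For the pointwise estimate \eqref{eq:Y_uniform_bdd}, the key observation is that $Y^{t,x,\phi}_t$ is deterministic, being the initial value of a Markovian BSDE with deterministic data $x$ and noise supported on $[t,T]$. Taking $s=t$ inside the supremum on the left of \eqref{eq:u_bound} with $p=1$ therefore gives $e^{\widetilde{\a}t}|Y^{t,x,\phi}_t|^2\le C(e^{\widetilde{\a}T}C_g^2+C_{fx}^2(\int_t^T e^{\frac{\widetilde{\a}}{2}s}\,\d s)^2)$. After taking square roots and using $\sqrt{a+b}\le\sqrt a+\sqrt b$, it remains to control the deterministic quantities $\int_t^T e^{\frac{\widetilde{\a}}{2}(s-t)}\,\d s$ and $e^{\frac{\widetilde{\a}}{2}(T-t)}$. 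I would split into the two sign cases: when $\widetilde{\a}\le 0$ these are bounded by $T$ and $1$ respectively, whereas when $\widetilde{\a}>0$ the crude bound $e^{\frac{\widetilde{\a}}{2}(s-t)}\le e^{\frac{\widetilde{\a}}{2}T}$ gives $\int_t^T e^{\frac{\widetilde{\a}}{2}(s-t)}\,\d s\le T e^{\frac{\widetilde{\a}}{2}T}$ and $e^{\frac{\widetilde{\a}}{2}(T-t)}\le e^{\frac{\widetilde{\a}}{2}T}$. Both cases combine to $|Y^{t,x,\phi}_t|\le C(C_g+C_{fx}T)e^{(\widetilde{\a}/2)_+T}$, and absorbing $L_{\bar{b}}+L_\sigma^2$ into the generic constant $C$ converts $(\widetilde{\a}/2)_+$ into $(\kappa_{\hat{b}}-\rho+C)_+$, which is exactly $C_Y$.

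The computations are routine once the setup is in place; I expect the only genuinely delicate points to be the bookkeeping of constants — specifically the choice $\eps=1/2$ that matches the admissible exponent in Lemma \ref{BSDE:apriori} to the stated $\widetilde{\a}$, and the sign-dependent estimate of the time integral that converts the naive $1/\widetilde{\a}$ factor into the asserted linear-in-$T$ growth $C_{fx}T$.
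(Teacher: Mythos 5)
Your proposal is correct and follows essentially the same route as the paper's proof: the same comparison of $(Y^{t,x,\phi},Z^{t,x,\phi})$ with the trivial solution $(0,0)$ in Lemma \ref{BSDE:apriori}, with the monotonicity constant $\kappa_{\hat{b}}-\rho+L_{\bar{b}}$ and Lipschitz constant $L_\sigma$ read off from Lemma \ref{lemma:H_gradient_estimate}, the choice $\eps=1/2$ matching $\widetilde{\a}$, and the bounds $|\p_x g|\le C_g$, $|\p_x H_s(\cdot,\cdot,0,0)|=|\p_x f_s|\le C_{fx}$. Your handling of the pointwise bound (taking $p=1$, square roots, and the sign-split on $\widetilde{\a}$, noting that $Y^{t,x,\phi}_t$ is deterministic) is exactly what the paper does, only with the determinism made explicit.
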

 
 \begin{proof}
Let 
$\bar{f}^1:[t,T]\t \Om \t 
 \sR^n  \t \sR^{n\t d}
\to \sR^n$ be such that
for all $(s,\om,y,z)\in [t,T]\t \Om \t 
 \sR^n  \t \sR^{n\t d}$,
$\bar{f}^1_s(\om,y,z)= \partial_x H_s(X^{t,x,\phi}_s(\om), \phi_s(X^{t,x,\phi}_s(\om)),y,z)$,
where $H$ is defined in \eqref{eq:Hamiltonian},
and $X^{t,x,\phi}
$
is defined by 
\eqref{sde_feedback}.
Then by Lemma \ref{lemma:H_gradient_estimate},
$|\bar{f}^1_t(\om,y,z)-\bar{f}^1_t(\om,y,z')|\le L_{\sigma}|z-z'|$,
and 
\begin{align}
\label{eq:bar_f_1}
\begin{split}
&\la y-y',\bar{f}^1_t(\om,y,z)-\bar{f}^1_t(\om,y',z)\ra 
\le (\kappa_{\hat{b}}-{
\rho}+L_{\bar{b}})|y-y'|^2.
\end{split}
\end{align}
By applying Lemma \ref{BSDE:apriori}
with $f^1=\bar{f}^1$,  $\xi^1=\p_x g(X^{t,x,\phi}_T)$, $f^2=0$, $\xi^2=0$,
$Y^2=Z^2=0$,
 $\eps=1/2$ and $\a=2(\kappa_{\hat{b}}-{
 \rho}+L_{\bar{b}}+L_{\sigma}^2)$, it holds with some constant $C\ge 0$ that, for all $p\ge 1$,
\begin{align*}
&
\sE\bigg[\sup_{s\in [t,T]}e^{p\a s}|Y^{t,x,\phi}_s|^{2p}
+\bigg(\int_t^Te^{\a s}|Z^{t,x,\phi}_s|^2\, \d s\bigg)^p\bigg]
\\
&\le 
C_{(p)}
\sE\bigg[ e^{p\a T}|\p_x g(X^{t,x,\phi}_T)|^{2p}
+\bigg(\int_t^Te^{\frac{\a}{2}s}
|\p_x f_s(X^{t,x,\phi}_s, \partial_x \phi(X^{t,x,\phi}_s))|\,\d s\bigg)^{2p}\bigg]
\\
&\le
C_{(p)}\bigg(
e^{p\a T}C_g^{2p}
+C_{fx}^{2p}
\bigg(\int_t^Te^{\frac{\a}{2} s}
\,\d s
\bigg)^{2p}
\bigg),
\end{align*}
where the last inequality follows from 
\eqref{eq:f_x_lipschitz}
and \eqref{eq:g_bound}.

Consequently, by setting $p=1$ in the above estimate and taking the square root of both sides,
there exists an absolute constant  $C\ge 0$ such that 
for all $(t,x)\in [0,T]\t \sR^n$,
\begin{align}
|Y^{t,x,\phi}_t|
&\le 
C
e^{-\frac{\a}{2}t}
\bigg(
e^{\f{\a}{2}T}C_g
+C_{fx}\int_t^T e^{\f{\a}{2}s}\,\d s \bigg)
\le
C 
(C_g
+C_{fx}T)
e^{(\frac{\a}{2} T)_+}.
\end{align}
This finishes the proof of the proposition.
 \end{proof}
 
Based on 
Lemma \ref{Lemma_iteration_1} and 
Proposition \ref{prop:Y_bound},
we now establish the 
uniform boundedness of $\phi^m_t(0)$.

\begin{Theorem}\label{thm:phi_bound}
Suppose (H.\ref{assum:pgm}) holds.
Let 
$a_0\in \bA$
and $z^{a_0}\in \sR^k$
 such that $z^{a_0}\in \p^s \ell(a_0)$.\footnotemark
 \footnotetext{
 For any $a\in \bA=\operatorname{dom}\ell$,  
 the convex subdifferential
 of $\ell$ at $a$
 is defined as 
 $\p^s \ell(a)\coloneqq\{z\in \sR^k\mid \ell(a')-\ell(a)\ge \la z, a'-a\ra, 
\; \fa a'\in \sR^k
\}$.
As $\ell$ is proper, lower semicontinuous and convex,
$\p^s \ell(\cdot)$ is nonempty on a dense subset of
 $\bA$
 by \cite[Corollary 2.44]{barbu2012convexity}.
 }
For each
 $\phi^0\in \cV_\bA$,
 $\tau >0$
 and 
$m\in \mathbb{N}$,
let $\phi^m$ be defined by \eqref{eq:phi_update}.
Then for all $\phi^0\in \cV_\bA$
and
{ 
$\tau \in (0, \frac{2}{\mu + L_{fa}} \wedge \frac{1}{\nu}]$},
\begin{align}
\label{eq:phi_bound_constant}
\begin{split}
 \sup_{m\in\sN_0,t\in [0,T]}|\phi^{m}_t(0)|
\le 
C_{(\phi^0)}
&\coloneqq 
\sup_{t\in [0,T]}|\phi_t^0(0)|
+2\Big( \tfrac{1}{{ \mu+\nu}}C_{\bar{b}} C_Y+
 \tfrac{2}{{ \mu+\nu}}(C_{fa}+L_{fa}|a_0|+
|z^{a_0}|)+|a_0|
\Big)
\\
&\quad
+{4C_YC_{\bar{b}}\frac{\mu + L_{fa}}{(\mu+\nu) L_{fa} + \mu\nu}},
\end{split}
\end{align}
where the constant $C_Y \ge 0$  is defined by \eqref{eq:Y_uniform_bdd}.

\end{Theorem}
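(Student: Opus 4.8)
The plan is to specialise the iteration \eqref{eq:phi_update} to the single point $x=0$ and derive a scalar affine recursion for $d_m\coloneqq \sup_{t\in[0,T]}|\phi^m_t(0)-a_0|$ that contracts towards a fixed point controlled by the a-priori adjoint bound $C_Y$. Since $\p_a H_t^{\textrm{re}}(0,a,y)=\bar{b}_t(0)^\top y+\p_a f_t(0,a)$, the update reads $\phi^{m+1}_t(0)=\prox_{\tau\ell}\big(\phi^m_t(0)-\tau(\bar{b}_t(0)^\top Y^{t,0,\phi^m}_t+\p_a f_t(0,\phi^m_t(0)))\big)$, and by Proposition \ref{prop:well_posed_phi} each $\phi^m\in\cV_\bA$, so Proposition \ref{prop:Y_bound} gives $|Y^{t,0,\phi^m}_t|\le C_Y$ uniformly in $m$ and $t$. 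The key device is to represent the reference point: since $z^{a_0}\in\p^s\ell(a_0)$, the resolvent identity yields $a_0=\prox_{\tau\ell}(a_0+\tau z^{a_0})$, so $a_0$ serves as a prox value even though it is not a genuine fixed point of the iteration map (the gradient terms at $a_0$ do not vanish).

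First I would bound one step. Using that $\tau\ell$ is $\tau\nu$-strongly convex, $\prox_{\tau\ell}$ is $\tfrac{1}{1+\tau\nu}$-Lipschitz (cf.~the proof of Lemma \ref{Lemma_iteration_1}), hence
\begin{align*}
|\phi^{m+1}_t(0)-a_0|
&\le \tfrac{1}{1+\tau\nu}\big|\,\phi^m_t(0)-a_0-\tau\big(\p_a f_t(0,\phi^m_t(0))-\p_a f_t(0,a_0)\big)
\\
&\qquad -\tau\big(\bar{b}_t(0)^\top Y^{t,0,\phi^m}_t+\p_a f_t(0,a_0)+z^{a_0}\big)\big|.
\end{align*}
The first bracket is handled exactly as in Lemma \ref{Lemma_iteration_1}: because $a\mapsto \p_a f_t(0,a)$ is $\mu$-strongly monotone (by \eqref{eq:strong_convex_f}) and $L_{fa}$-Lipschitz (by \eqref{eq:f_a_lipschitz}), the co-coercivity inequality \cite[Theorem 2.1.12]{nesterov2003introductory} gives, for $\tau\le\tfrac{2}{\mu+L_{fa}}$, a contraction factor $1-\tau\tfrac{\mu L_{fa}}{\mu+L_{fa}}$ acting on $|\phi^m_t(0)-a_0|$. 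The second bracket is a $t$-independent constant absorbed into $D\coloneqq C_{\bar{b}}C_Y+C_{fa}+L_{fa}|a_0|+|z^{a_0}|$, using $|\bar{b}_t(0)|\le C_{\bar{b}}$, $|Y^{t,0,\phi^m}_t|\le C_Y$ and $|\p_a f_t(0,a_0)|\le C_{fa}+L_{fa}|a_0|$ from \eqref{eq:b_bound} and \eqref{eq:f_a_lipschitz}. Taking the supremum over $t$ this produces the recursion $d_{m+1}\le \tfrac{1-\tau\theta}{1+\tau\nu}d_m+\tfrac{\tau}{1+\tau\nu}D$, with $\theta\coloneqq\tfrac{\mu L_{fa}}{\mu+L_{fa}}$.

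Finally I would iterate. The factor $\tfrac{1-\tau\theta}{1+\tau\nu}$ lies in $[0,1)$: positivity follows from $\tau\theta\le\tfrac12$ (using $\tau\le\tfrac{2}{\mu+L_{fa}}$ and $(\mu+L_{fa})^2\ge 4\mu L_{fa}$), and strict contraction from $\theta+\nu>0$, which holds because $\mu+\nu>0$ together with $L_{fa}\ge\mu$ (the latter following from \eqref{eq:strong_convex_f} and \eqref{eq:f_a_lipschitz} via Cauchy--Schwarz, and forcing $\theta\ge\mu/2$). Since $1-\tfrac{1-\tau\theta}{1+\tau\nu}=\tfrac{\tau(\theta+\nu)}{1+\tau\nu}$, the affine recursion has fixed point $\tfrac{D}{\theta+\nu}$ and yields $d_m\le d_0+\tfrac{D}{\theta+\nu}$ for all $m\in\sN_0$. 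Using $\theta\ge\mu/2$ to bound $\tfrac{1}{\theta+\nu}\le\tfrac{2}{\mu+\nu}$, the triangle inequality $|\phi^m_t(0)|\le|a_0|+d_m\le 2|a_0|+|\phi^0_t(0)|+\tfrac{D}{\theta+\nu}$, and taking the supremum over $t$ and $m$ reproduces a bound of the stated form $C_{(\phi^0)}$ in \eqref{eq:phi_bound_constant} (the precise grouping there is a matter of bookkeeping and may be presented more conservatively than the above). The main obstacle is not analytic but structural: one must guarantee that the inhomogeneous term $D$ is uniform in $m$, which rests entirely on the $m$-independent adjoint estimate $C_Y$ from Proposition \ref{prop:Y_bound}; the accompanying subtlety is that $a_0$ is only a prox value after the correction $\tau z^{a_0}$, so the non-vanishing Hamiltonian gradient at the reference point must be absorbed into $D$ through the subgradient inclusion $z^{a_0}\in\p^s\ell(a_0)$ rather than cancelled.
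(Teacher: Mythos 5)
Your proposal is correct, and it reaches the stated bound by a route that differs from the paper's in one essential choice: the reference point of the recursion. The paper compares $\phi^m_t(0)$ with $c^0_t\coloneqq\argmin_{a\in\sR^k}\big(H^{\textrm{re}}_t(0,a,Y^{t,0,\phi^0}_t)+\ell(a)\big)$, which is a genuine fixed point of the prox-gradient map with the adjoint frozen at the initial policy; the inhomogeneity in the resulting recursion is then only $\tau C_{\bar{b}}\,|Y^{t,0,\phi^m}_t-Y^{t,0,\phi^0}_t|\le 2\tau C_{\bar{b}}C_Y$, but the paper must separately bound $|c^0_t|$, which it does through the convex conjugate $h^*_t(0,\cdot)$ of $f_t(0,\cdot)+\ell(\cdot)$ and the $\tfrac{1}{\mu+\nu}$-Lipschitz continuity of $\p_z h^*$ --- this is exactly where $a_0$ and $z^{a_0}$ enter in the paper. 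You instead anchor at $a_0$ itself via the resolvent identity $a_0=\prox_{\tau\ell}(a_0+\tau z^{a_0})$, accepting that $a_0$ is not a fixed point and absorbing the full non-vanishing gradient $\bar{b}_t(0)^\top Y^{t,0,\phi^m}_t+\p_a f_t(0,a_0)+z^{a_0}$ into the constant $D$; this bypasses the conjugate-function analysis entirely and is more elementary. Both arguments rest on the same three pillars --- the $\tfrac{1}{1+\tau\nu}$-Lipschitz continuity of $\prox_{\tau\ell}$, the Nesterov co-coercivity bound giving the factor $1-\tau\tfrac{\mu L_{fa}}{\mu+L_{fa}}$ (for you applied directly to $\p_a f_t(0,\cdot)$, for the paper packaged in Lemma \ref{Lemma_iteration_1}), and the $m$-uniform adjoint bound $C_Y$ of Proposition \ref{prop:Y_bound} --- and both implicitly require $(\mu+\nu)L_{fa}+\mu\nu>0$ (equivalently $\theta+\nu>0$), which you justify via $\mu\le L_{fa}$, consistent with Remark \ref{remark:items}. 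Your final constant, $\sup_t|\phi^0_t(0)|+2|a_0|+\tfrac{2}{\mu+\nu}C_{\bar{b}}C_Y+\tfrac{2}{\mu+\nu}(C_{fa}+L_{fa}|a_0|+|z^{a_0}|)$, is dominated term by term by $C_{(\phi^0)}$ in \eqref{eq:phi_bound_constant}, so the theorem with the paper's constant follows; the paper's larger constant, in particular the extra term $4C_YC_{\bar{b}}\tfrac{\mu+L_{fa}}{(\mu+\nu)L_{fa}+\mu\nu}$, is the price of anchoring at $c^0_t$, since the adjoint difference $|Y^{t,0,\phi^m}_t-Y^{t,0,\phi^0}_t|$ and the size of $c^0_t$ must then be controlled separately, whereas your single inhomogeneity $D$ accounts for both at once.
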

\begin{proof}
For each $(t,x, a)\in [0,T]\t  \sR^n\t  \sR^k$ and $u\in \sR^n$, 
 let 
$h_t(x,a)= f_t(x,a)+\ell(a)$
and 
$\phi^\star_t[u]=\argmin_{a\in \sR^k} (H^\textrm{re}_t(0,a,u)+\ell(a))$, with 
$H^{\textrm{re}}
$  defined as in \eqref{eq:Hamiltonian_re}.
By  \eqref{eq:Hamiltonian_re}
and \eqref{eq:affine},
\bb\label{eq:phi_optimal_u}
\phi^\star_t[u]=\argmin_{a\in \sR^k} 
\big(\la \bar{b}_t(0) {
 a},u\ra +h_t(0,a)\big)
=
\p_z h^*_t(0,-\bar{b}_t^\top(0) u),
\ee
where 
$h^*:[0,T]\t \sR^n\t \sR^k\to \sR^k$ is the convex conjugate function of $h$ defined by
\bb\l{eq:conjugate}
 h^*_t(x,z)\coloneqq \sup\{\la a,z\ra-h_t(x,a)\mid a\in \sR^k\}.
\ee
Note that by (H.\ref{assum:pgm}),
for each $(t,x)\in [0,T]\t \sR^n$,
the function
$a\mapsto h_t(x,a)$ is proper, lower semicontinuous
and 
{$(\mu+\nu)$-strongly} convex,
which implies that 
$z\mapsto h^*_t(x,z)$  is finite and differentiable on $\sR^k$,
  $z\mapsto \p_z h^*_t(x,z)$ 
  is  {$\tfrac{1}{\mu+\nu}$}-Lipschitz continuous, 
    and $\p_z h^*_t(x,z)=\argmax_{a\in \sR^k} \big(\la a, z\ra -h_t(x,a)\big)$.
    
Let $
a_0\in \operatorname{dom} \ell$
and $z^{a_0}\in \sR^k$
 such that $z^{a_0}\in \p^s \ell(a_0)\not = \emptyset$.
 Then  for all $t\in [0,T]$, by the differentiability and convexity of $a\mapsto f_t(0,a)$, $\p_a f_t(0,a_0)+
z^{a_0} \in \p^s h_t(0, a_0)$
(see \cite[Corollary 10.9]{rockafellar2009variational}).
Hence, by 
the fact that
$\p_z h^*_t(0,0)=\argmin_{a\in \sR^k} h_t(0,a)$
and  
the {$(\mu+\nu)$-strong} convexity of $a\mapsto h_t(0,a)$,
\begin{align*}
h_t(0,a_0)\ge h_t(0,\p_z h^*_t(0,0))
\ge h_t(0,a_0)+\la \p_a f_t(0,a_0)+
z^{a_0}, \p_z h^*_t(0,0)-a_0\ra +\tfrac{{ \mu+\nu}}{2}|\p_z h^*_t(0,0)-a_0|^2,
\end{align*}
which implies that 
$$
|\p_z h^*_t(0,0)-a_0|\le \tfrac{2}{{ \mu+\nu}}|  \p_a f_t(0,a_0)+
z^{a_0}|\le
 \tfrac{2}{{ \mu+\nu}}(C_{fa}+L_{fa}|a_0|+
|z^{a_0}|),
$$
where the last inequality follows from \eqref{eq:f_a_lipschitz}.
Hence by using 
\eqref{eq:affine}  and 
the {$\tfrac{1}{\mu +\nu}$}-Lipschitz continuity of $z\mapsto \p_zh^*_t(0,z)$, for all $t\in [0,T]$ and $u\in \sR^n$, 
\begin{align}\label{eq:phi_star_u}
\begin{split}
|\phi_t^\star[u]|
&\le |\p_z h^*_t(0,-\bar{b}_t^\top(0) u)
-\p_z h^*_t(0,0)|+|\p_z h^*_t(0,0)|
\\
&\le \tfrac{1}{{ \mu+\nu}}C_{\bar{b}} |u|+
 \tfrac{2}{{ \mu+\nu}}(C_{fa}+L_{fa}|a_0|+
|z^{a_0}|)+|a_0|.
\end{split}
\end{align}
Let $\phi^0\in \cV_\bA$ and { $\tau \in (0, \frac{2}{\mu + L_{fa}}\wedge \frac{1}{\nu}]$} be fixed in the subsequent analysis. 
For each  $t\in [0,T]$, let $c^0_t\coloneqq \phi_t^\star[Y^{t,0,\phi^0}_t]$.
{
Observe that 
for all $t\in [0,T]$, $c\in \sR^k$ and $u\in \sR^n$, 
by the definition of $\prox_{\tau\ell} $ in \eqref{eq:proximal},
\begin{align*}
&
c=\argmin_{a\in \sR^k} (H^\textrm{re}_t(0,a,u)+\ell(a))
\Longleftrightarrow
0\in \p_a H^\textrm{re}_t(0,c ,u)+\p^s \ell(c) 
\\
&\quad \Longleftrightarrow
0\in \big(c-(c-\tau \p_a H^\textrm{re}_t(0,c ,u)) \big) +\p^s (\tau   \ell)(c) 
\Longleftrightarrow
c=\prox_{\tau\ell} (c-\tau \p_a H^\textrm{re}_t(0,c ,u)). 
\end{align*}
}%
Then 
for all $t\in [0,T]$, 
the fact that 
$c^0_t=\argmin_{a\in \sR^k} (H^\textrm{re}_t(0,a,Y^{t,0,\phi^0}_t)+\ell(a))$
implies that
$   c^0_t=\prox_{\tau \ell}( c^0_t-\tau \p_a H^{\textrm{re}}_t(0, c^0_t,Y^{t,0,\phi^0}_t))$.
Hence 
for all $m\in \sN_0$ and  $t\in [0,T]$, 
\eqref{eq:phi_update} and 
 Lemma \ref{Lemma_iteration_1}
 imply that 
 \begin{align*}
& |\phi^{m+1}_t(0)-c^{0}_t|
\\
& =\big|\prox_{\tau\ell} \big(\phi_t^m(0)-\tau {
 \partial_{a}} H_t^{\textrm{re}}(0,\phi_t^m(0),Y^{t,0,\phi^m}_t) \big)
 -\prox_{\tau \ell}( c^0_t-\tau \p_a H^{\textrm{re}}_t(0, c^0_t,Y^{t,0,\phi^0}_t)) \big|
 \\
&\le { \left(
1 - \tau \frac{1}{2}
\left(\frac{\mu L_{fa}}{\mu + L_{fa}}
+\nu
\right)
\right)}|\phi_t^m(0) - c^{0}_t|
+\tau C_{\bar{b}}|Y^{t,0,\phi^m}_t-Y^{t,0,\phi^0}_t|.
\end{align*}
By Proposition \ref{prop:Y_bound}, there exists an absolute constant $C\ge 0$ such that 
for all $t\in [0,T]$ and $\phi\in \cV_\bA$, 
$|Y^{t,0,\phi}_t|
\le C_Y\coloneqq C 
(C_g
+C_{fx}T)
(e^{\a_+ T}),$
with 
$\a=\kappa_{\hat{b}}-{
\rho} +L_{\bar{b}}+L_{\sigma}^2$,
which implies that for all $m\in \sN_0$, 
\begin{align*}
 |\phi^{m}_t(0)-c^{0}_t|
&\le |\phi_t^0(0) - c^{0}_t|
+
{2C_{\bar{b}}\frac{\mu + L_{fa}}{(\mu+\nu) L_{fa} + \mu\nu}}
\sup_{m\in \sN_0}|Y^{t,0,\phi^m}_t-Y^{t,0,\phi^0}_t|
\\
&\le |\phi_t^0(0)| +| c^{0}_t|
+{4C_YC_{\bar{b}}\frac{\mu + L_{fa}}{(\mu+\nu) L_{fa} + \mu\nu}}.
\end{align*}
By \eqref{eq:phi_star_u}, 
for all $t\in [0,T]$,
$|c_t^0|\le  \tfrac{1}{{ \mu+\nu}}C_{\bar{b}} C_Y+
 \tfrac{2}{{ \mu+\nu}}(C_{fa}+L_{fa}|a_0|+
|z^{a_0}|)+|a_0|$,
from which for all $m\in \sN_0$ and $t\in [0,T]$,
\begin{align*}
 |\phi^{m}_t(0)|
&\le |\phi_t^0(0)| +2| c^{0}_t|
+{4C_YC_{\bar{b}}\frac{\mu + L_{fa}}{(\mu+\nu) L_{fa} + \mu\nu}}
\\
&\le \sup_{t\in [0,T]}|\phi_t^0(0)|
+2\Big( \tfrac{1}{{ \mu+\nu}}C_{\bar{b}} C_Y+
 \tfrac{2}{{ \mu+\nu}}(C_{fa}+L_{fa}|a_0|+
|z^{a_0}|)+|a_0|
\Big)
+{4C_YC_{\bar{b}}\tfrac{\mu + L_{fa}}{(\mu+\nu) L_{fa} + \mu\nu}}.
\end{align*}
This finishes the proof of the uniform boundedness of $\phi^m_t(0)$.
\end{proof}

\subsection{Uniform Lipschitz continuity 
in space
}

This section proves that 
the iterates  $(\phi^m)_{m\in \sN_0}$ 
satisfy $\sup_{m\in \sN_0}[\phi^m]_1<\infty$
if one of the conditions \ref{item:T_small}-\ref{item:small_k} 
 holds. 
The following proposition estimates the Lipschitz continuity of the function $x\mapsto Y^{t,x,\phi}_t$
in terms of the Lipschitz continuity of a given feedback control $\phi\in \cV_\bA$.

\begin{Proposition}\label{prop:Y_stability}
Suppose (H.\ref{assum:pgm}) holds.
For each 
  $\phi\in \cV_\bA$ and 
$(t,x)\in [0,T]\t \sR^n$,
let 
$(Y^{t,x,\phi},Z^{t,x,\phi})\in \cS^2(t,T;\sR^n)\t \cH^2(t,T;\sR^{n\t d})$ 
be defined by 
\eqref{bsde_feedback}.
Then 
there exists a constant 
$C\ge 0$  such that
for all
$\phi\in \cV_\bA$,
$t\in [0,T], x,x'\in \sR^n$,
\begin{align}
\label{eq:Y_Lipschitz_estimate_statement}
 \begin{split}
|Y^{t,x,\phi}_t-Y^{t,x',\phi}_t|
&\le 
 L_Y([\phi]_1)|x-x'|,
\end{split}
 \end{align}
 where 
 for each $M\ge 0 $,
the constant $L_Y(M)\ge 0$ is  defined by
\begin{align}
\label{eq:L_Y_phi}
\begin{split}
 L_Y(M) &\coloneqq  C\bigg[
L_g
e^{ 
\big(2L_{\bar{b}}M
+ 2\kappa_{\hat{b}} + C \big)_+
T}
 e^{\alpha_+ T} +\bigg(
\Big(
(1+
L_{\bar{b}}M)
(C_g
+C_{fx}T)
e^{(\kappa_{\hat{b}}-{
\rho}+C )_+T}
\\
&\q 
+L_{fx}(1+M)
 \Big)
 \tfrac{e^{\a T} -1}{\a}
 +
\sqrt{T}
\Big(
e^{\a_+ T}C_g
+C_{fx}
\tfrac{e^{\a T} -1}{\a}
\Big)\bigg)
e^{
\big(2L_{\bar{b}}M
+ 2\kappa_{\hat{b}} + C \big)_+T
}
\bigg],
\\
\alpha&\coloneqq 
\kappa_{\hat{b}} - {
\rho }+L_{\bar{b}}+L_{\sigma}^2.
\end{split}
\end{align}

\end{Proposition}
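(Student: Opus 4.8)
The statement is a forward--backward stability estimate for the decoupled FBSDE \eqref{sde_feedback}--\eqref{bsde_feedback}, made quantitative in the Lipschitz seminorm $M=[\phi]_1$. The plan is to fix $\phi\in\cV_\bA$, $t\in[0,T]$ and $x,x'\in\sR^n$, abbreviate $X^i=X^{t,x_i,\phi}$ and $(Y^i,Z^i)=(Y^{t,x_i,\phi},Z^{t,x_i,\phi})$ with $x_1=x$, $x_2=x'$, and then proceed in three stages: first control the forward difference $X^1-X^2$ via Lemma \ref{forward:apriori_p}, next feed this into the backward stability estimate of Lemma \ref{BSDE:apriori}, and finally substitute the a priori bounds of Proposition \ref{prop:Y_bound} to assemble the explicit constant $L_Y(M)$ in \eqref{eq:L_Y_phi} and thereby establish \eqref{eq:Y_Lipschitz_estimate_statement}.

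For the forward step, I would write the drift of \eqref{sde_feedback} as $\tilde b_s(y)=\hat b_s(y)+\bar b_s(y)\phi_s(y)$ and verify, using the one-sided Lipschitz bound in \eqref{eq:hat_b_lipschitz}, the estimate \eqref{eq:bar_b_lipschitz}, and $|\phi_s(y)-\phi_s(y')|\le M|y-y'|$, that $\tilde b$ is monotone with constant $\mu_X=\kappa_{\hat b}+L_{\bar b}(1+M)$, while $\sigma$ is $L_\sigma$-Lipschitz by \eqref{eq:sigma}. Since $X^1$ and $X^2$ share the same coefficients, the mismatch terms in Lemma \ref{forward:apriori_p} vanish and I obtain, for every $p\ge2$,
\[
\|X^1-X^2\|_{\cS^p}\le C_{(p)}\,e^{(2\kappa_{\hat b}+2L_{\bar b}M+C_{(p)})_+T}\,|x-x'|,
\]
which already carries the outer exponential factor $e^{(2L_{\bar b}M+2\kappa_{\hat b}+C)_+T}$ in \eqref{eq:L_Y_phi}.

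For the backward step, both $Y^i$ solve \eqref{bsde_feedback} with driver $f^i_s(\omega,y,z)=\p_x H_s(X^i_s,\phi_s(X^i_s),y,z)$ and terminal datum $\p_x g(X^i_T)$. By Lemma \ref{lemma:H_gradient_estimate} this driver is monotone in $y$ with constant $\kappa_{\hat b}-\rho+L_{\bar b}$ and $L_\sigma$-Lipschitz in $z$, so Lemma \ref{BSDE:apriori} with $p=1$, $\eps=\tfrac12$ and weight $\alpha_{\mathrm{lem}}=2\alpha$ (where $\alpha=\kappa_{\hat b}-\rho+L_{\bar b}+L_\sigma^2$) bounds $|Y^1_t-Y^2_t|$ by the terminal difference $\p_x g(X^1_T)-\p_x g(X^2_T)$, controlled through $L_g\|X^1-X^2\|_{\cS^2}$, plus the integrated driver mismatch $\Delta f_s:=|f^1_s(\cdot,Y^2_s,Z^2_s)-f^2_s(\cdot,Y^2_s,Z^2_s)|$. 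The Lipschitz part of Lemma \ref{lemma:H_gradient_estimate}, together with $|\phi_s(X^1_s)-\phi_s(X^2_s)|\le M|X^1_s-X^2_s|$, gives
\[
\Delta f_s\le\Big[(L_{\hat b}+L_{\bar b}(1+M))|Y^2_s|+L_\sigma|Z^2_s|+L_{fx}(1+M)\Big]|X^1_s-X^2_s|.
\]
Substituting the bound $|Y^2_s|\le C_Y$ from \eqref{eq:Y_uniform_bdd} (more precisely the weighted moment bound \eqref{eq:u_bound}) and the weighted $\cH^2$-bound for $Z^2$ from Proposition \ref{prop:Y_bound}, and matching exponential weights, yields the remaining contributions in \eqref{eq:L_Y_phi}: the $Y$-term produces (up to the generic constant $C$) $(1+L_{\bar b}M)(C_g+C_{fx}T)e^{(\kappa_{\hat b}-\rho+C)_+T}$, the running-cost term produces $L_{fx}(1+M)$, and the $Z$-term produces the factor $\sqrt T\,(e^{\alpha_+T}C_g+C_{fx}\tfrac{e^{\alpha T}-1}{\alpha})$.

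The main obstacle is the $Z^2$ contribution to $\Delta f_s$: since $Z^2$ is only square integrable, the product $|Z^2_s|\,|X^1_s-X^2_s|$ must be integrated against the exponential weight and then squared inside an expectation. I would handle this by pulling $\sup_s|X^1_s-X^2_s|$ out of the time integral and then applying Cauchy--Schwarz both in $s$ (producing the $\sqrt T$) and in $\omega$; the latter forces the use of the $p>2$ versions of Lemma \ref{forward:apriori_p} and Proposition \ref{prop:Y_bound}, which is precisely why those auxiliary results are stated for general $p$. A second delicate point is the bookkeeping: one must track the forward monotonicity constant $\mu_X=\kappa_{\hat b}+L_{\bar b}(1+M)$ (entering only the outer forward exponential) separately from the driver monotonicity constant $\kappa_{\hat b}-\rho+L_{\bar b}$ (entering $\alpha$), so that the $M$-dependence remains confined to the places displayed in \eqref{eq:L_Y_phi}.
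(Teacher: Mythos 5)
Your proposal is correct and follows essentially the same route as the paper's proof: forward stability via Lemma \ref{forward:apriori_p} with monotonicity constant $\kappa_{\hat b}+L_{\bar b}(1+[\phi]_1)$, backward stability via Lemma \ref{BSDE:apriori} (with $p=1$, $\eps=1/2$, weight $2\alpha$), the driver-mismatch bound from Lemma \ref{lemma:H_gradient_estimate}, and exactly the paper's treatment of the $Z$-term (pull out $\sup_s|\Delta X_s|$, Cauchy--Schwarz in $\omega$ forcing $\cS^4$-bounds on $\Delta X$ and the $p=2$ case of Proposition \ref{prop:Y_bound}, plus Cauchy--Schwarz in time for the $\sqrt{T}$ factor). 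The only point you gloss over is that using the pointwise bound $|Y^{t,x',\phi}_s|\le C_Y$ at interior times $s>t$ requires the Markov property $Y^{t,x',\phi}_s=Y^{s,X^{t,x',\phi}_s,\phi}_s$, which the paper invokes explicitly via \cite[Theorem 5.1.3]{zhang2017backward}.
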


\begin{proof}
Let 
$\bar{f}^1,\bar{f}^2:[t,T]\t \Om \t 
 \sR^n  \t \sR^{n\t d}
\to \sR^n$ be such that
for all $(s,\om,y,z)\in [t,T]\t \Om \t 
 \sR^n  \t \sR^{n\t d}$,
 \begin{align*}
   \bar{f}^1_s(\om,y,z)
   &= \partial_x H_s(X^{t,x,\phi}_s(\om), \phi_s(X^{t,x,\phi}_s(\om)),y,z),
   \\
   \bar{f}^2_s(\om,y,z)
   &= \partial_x H_s(X^{t,x',\phi}_s(\om), \phi_s(X^{t,x',\phi}_s(\om)),y,z),
 \end{align*}
where $H$ is defined by \eqref{eq:Hamiltonian},
and for each   $\phi\in \cV_\bA$,
$X^{t,x,\phi}\in \cS^2(t,T;\sR^n)$
is defined by 
\eqref{sde_feedback}.
By using 
\eqref{eq:bar_f_1} and 
 applying Lemma \ref{BSDE:apriori}
with $p=1$, $f^1=\bar{f}^1$,  $\xi^1=\p_x g(X^{t,x,\phi}_T)$, $f^2=\bar{f}^2$, $\xi^2=\p_x g(X^{t,x',\phi}_T)$,
$(Y^2,Z^2)=(Y^{t,x',\phi},Z^{t,x',\phi})$
and  $\eps=1/2$,
%
there exists a absolute constant  $C\ge 0$ such that
 \begin{align}
 \label{eq:Y_lipschitz}
 \begin{split}
&\sE\bigg[\sup_{s\in [t,T]}e^{\widetilde{\a} s}
|Y^{t,x,\phi}_s-Y^{t,x',\phi}_s|^2
+\int_t^Te^{\widetilde{\a} s}
|Z^{t,x,\phi}_s-Z^{t,x',\phi}_s|^2
\, \d s\bigg]
\\
&\le
C\sE\bigg[ 
e^{\widetilde{\a} T}
|\p_x g(X^{t,x,\phi}_T)-\p_x g(X^{t,x',\phi}_T)|^2
\\
&\q 
+\bigg(\int_t^Te^{\frac{\widetilde{\a}}{2}s}
|\bar{f}^1_s(\cdot, Y^{t,x',\phi}_s,Z^{t,x',\phi}_s)-\bar{f}^2_s(\cdot, Y^{t,x',\phi}_s,Z^{t,x',\phi}_s)|\,\d s\bigg)^2\bigg].
\end{split}
\end{align}
where we defined $\widetilde{\alpha} \coloneqq 2(\kappa_{\hat{b}}-{
\rho}+L_{\bar{b}}+L_{\sigma}^2)$
above and hereafter. 
Recall that
in the subsequent analysis,
$C$ denotes a generic constant independent of 
$T,\rho,\kappa_{\hat{b}}, C_{\bar{b}},  C_{fx}, L_{fx}, \mu, \nu, L_{fa}, C_g, L_g$.

We now estimate the two terms on the right-hand side of 
\eqref{eq:Y_lipschitz}.
Let $b^1,b^2:[0,T]\t \sR^n\to \sR^n$ be such that 
for all $(t,x)\in [0,T]\t \sR^n$,
$b^1_t(x)=b^2_t(x)=b_t(x,\phi_t(x))$,
and let $\Delta X^{t,x,x'}=X^{t,x,\phi} - {X}^{t,x',\phi}$.
Then by \eqref{eq:hat_b_lipschitz}
and
\eqref{eq:bar_b_lipschitz},
 for all $t\in [0,T]$ and $x,x'\in \sR^n$,
 \begin{align*}
 \la x-x',b^1_t(x)-b^1_t(x')\ra 
 & \le
 \la x-x',\hat{b}_t(x)-\hat{b}_t(x')+
 \bar{b}_t(x)\phi_t(x)- \bar{b}_t(x')\phi_t(x')\ra 
 \\
&  \le
  \kappa_{\hat{b}} |x-x'|^2
  +L_{\bar{b}}|x-x'|(|x-x'|+|\phi_t(x)-\phi_t(x')|)
  \\
  &\le 
\big(  \kappa_{\hat{b}} 
  +L_{\bar{b}}(1+[\phi]_1)\big)|x-x'|^2.
   \end{align*}
By  Lemma \ref{forward:apriori_p} for $p \geq 2$ and 
\eqref{eq:sigma},
 for all $x,x'\in \sR^n$,
\begin{align}
\label{eq:X_lipschitz_phi}
\|\Delta X^{t,x,x'}\|_{\cS^p }
& \le 
C_{(p)}e^{T\big(2(
\kappa_{\hat{b}} 
  +L_{\bar{b}}(1+[\phi]_1))
+C_{(p)} L_{\sigma}^2\big)_{+}}
|x-x'|,
\end{align}
from which by setting $p=2$ and using \eqref{eq:g_bound}, we obtain 
\begin{align}
 \label{eq:Y_lipschitz_term1}
\begin{split}
&
e^{-\frac{\widetilde{\a}}{2} t}
\sE\Big[ e^{\widetilde{\a} T}
|\p_x g(X^{t,x,\phi}_T)-\p_x g(X^{t,x',\phi}_T)|^2
\Big]^{1/2}
\le 
C
L_g 
e^{ 
(T-t)\frac{\widetilde{\a}}{2}+ T\big(
2(\kappa_{\hat{b}} 
  +L_{\bar{b}}[\phi]_1)
  +C
\big)_{+}}
|x-x'|.
\end{split}
\end{align}

We then proceed to estimate the second term in 
\eqref{eq:Y_lipschitz}.
Note that for all 
$t\in [0,T]$, $x,x'\in \sR^n$,
$\phi\in \cV_\bA$,
$(y,z)\in \sR^n\t \sR^{n\t d}$,
by 
Lemma \ref{lemma:H_gradient_estimate},
\begin{align*}
&|\partial_x H_t(x, \phi_t(x),y,z)
-\partial_x H_t(x', \phi_t(x'),y,z)|
\\
&\le 
\big(L_{\hat{b}}|x-x'|+
L_{\bar{b}}(|x-x'|+|\phi_t(x)-\phi_t(x')|)\big)| y| + L_{\sigma}|x-x'| |z|
+L_{fx}(|x-x'|+|\phi_t(x)-\phi_t(x')|)
\\
&\le 
\Big(
\big(L_{\hat{b}}+
L_{\bar{b}}(1+[\phi]_1)\big)  | y|
+L_{fx}(1+[\phi]_1) + L_{\sigma} |z|
\Big)|x-x'|,
\end{align*}
which 
along with the Cauchy-Schwarz inequality
implies that 
\begin{align}
 \begin{split}
&
\sE\bigg[ \bigg(\int_t^Te^{\frac{\widetilde{\a}}{2}s}
|\bar{f}^1_s(\cdot, Y^{t,x',\phi}_s,Z^{t,x',\phi}_s)-\bar{f}^2_s(\cdot, Y^{t,x',\phi}_s,Z^{t,x',\phi}_s)|\,\d s\bigg)^2\bigg]^{1/2}
\\
&\le
\sE\bigg[ \bigg(\int_t^Te^{\frac{\widetilde{\a}}{2}s}
\Big(
(C+
L_{\bar{b}}[\phi]_1) |Y^{t,x',\phi}_s|
+L_{fx}(1+[\phi]_1) + L_{\sigma} |Z^{t,x',\phi}_s|
\Big)|\Delta X^{t,x,x'}_s|
\,\d s\bigg)^2\bigg]^{1/2}
\\
&\le
\sE\bigg[ \bigg(\int_t^Te^{\frac{\widetilde{\a}}{2}s}
\Big(
(C+
L_{\bar{b}}[\phi]_1) |Y^{t,x',\phi}_s|
+L_{fx}(1+[\phi]_1)
+C |Z^{t,x',\phi}_s| \Big)\d s
\bigg)^4
\bigg]^{1/4} 
 \|\Delta X^{t,x,x'}\|_{\cS^4 }
\\
&\le
\bigg(
\sE\bigg[ \bigg(\int_t^Te^{\frac{\widetilde{\a}}{2}s}
\Big(
(C+
L_{\bar{b}}[\phi]_1) |Y^{t,x',\phi}_s|
+L_{fx}(1+[\phi]_1)
 \Big)\d s
\bigg)^4
\bigg]^{1/4} 
\\
&\q 
+C
\sE\bigg[ \bigg(\int_t^Te^{\f{\widetilde{\a}}{2} s}
|Z^{t,x',\phi}_s| \,\d s
\bigg)^4
\bigg]^{1/4}\bigg)
 \|\Delta X^{t,x,x'}\|_{\cS^4 }.
\label{eq:Y_lipschitz_term2}
\end{split}
\end{align}
By
 Proposition \ref{prop:Y_bound}, 
 there exists an absolute constant  $C\ge 0$ such that 
for all $(t,x')\in [0,T]\t \sR^n$,
\begin{align}\label{eq:C^Y_s}
|Y^{t,x',\phi}_t|
\le
 C_Y\coloneqq
C 
(C_g
+C_{fx}T)
e^{(\kappa_{\hat{b}} - {
\rho } +C )_+T}.
\end{align}
The Markovian property of $Y^{t,x',\phi}$ implies that 
$Y^{t,x',\phi}_s=Y^{s,X^{t,x',\phi}_s,\phi}_s$
(see e.g., \cite[Theorem 5.1.3]{zhang2017backward}),
which subsequently shows that 
\begin{align}
\label{eq:Y_lipschitz_Y_estimate}
 \begin{split}
 &
\sE\bigg[ \bigg(\int_t^Te^{\frac{\widetilde{\a}}{2}s}
\Big(
(C+
L_{\bar{b}}[\phi]_1) |Y^{t,x',\phi}_s|
+L_{fx}(1+[\phi]_1)
 \Big)\d s
\bigg)^4
\bigg]^{1/4} 
\\
&\le 
\Big(
(C+
L_{\bar{b}}[\phi]_1) C_Y
+L_{fx}(1+[\phi]_1)
 \Big)\int_t^Te^{\frac{\widetilde{\a}}{2}s}\,\d s.
\end{split}
\end{align}
On the other hand,
by
the Cauchy-Schwarz inequality
and 
 Proposition \ref{prop:Y_bound}
 with $p=2$,
there exists
an absolute constant 
$C\ge 0$ such that 
\begin{align}\l{eq:Y_lipschitz_Z_estimate}
 \begin{split}
\sE\bigg[ \bigg(\int_t^T e^{\f{\widetilde{\a}}{2} s}
|Z^{t,x',\phi}_s| \,\d s
\bigg)^4
\bigg]^{1/4}
&\le
\sqrt{T-t}
\sE\bigg[
\bigg(\int_t^Te^{\widetilde{\a} s}
|Z^{t,x',\phi}_s|^2 \,\d s
\bigg)^2
\bigg]^{1/4} 
\\
&\le
C\sqrt{T-t}
\bigg(
e^{\frac{\widetilde{\a} }{2}T}C_g
+C_{fx}
\int_t^Te^{\frac{\widetilde{\a}}{2} s}
\,\d s
\bigg).
\end{split}
\end{align}
Combining 
\eqref{eq:X_lipschitz_phi} (with $p=4$),
\eqref{eq:Y_lipschitz_term2},
\eqref{eq:Y_lipschitz_Y_estimate},
and 
\eqref{eq:Y_lipschitz_Z_estimate}
gives  that 
\begin{align*}
 \begin{split}
&
\sE\bigg[ \bigg(\int_t^Te^{\frac{\widetilde{\a}}{2}s}
|\bar{f}^1_s(\cdot, Y^{t,x',\phi}_s,Z^{t,x',\phi}_s)-\bar{f}^2_s(\cdot, Y^{t,x',\phi}_s,Z^{t,x',\phi}_s)|\,\d s\bigg)^2\bigg]^{1/2}
\\
&\le
C
\bigg(
\Big(
(1+
L_{\bar{b}}[\phi]_1) C_Y
+L_{fx}(1+[\phi]_1)
 \Big)\int_t^Te^{\frac{\widetilde{\a}}{2}s}\,\d s
\\
&\q 
+
\sqrt{T}
\bigg(
e^{\frac{\widetilde{\a} }{2}T}C_g
+C_{fx}
\int_t^Te^{\frac{\widetilde{\a}}{2} s}
\,\d s
\bigg)\bigg)
e^{T\big(2(
\kappa_{\hat{b}} 
  +L_{\bar{b}}[\phi]_1)
+C\big)_{+}}
|x-x'|,
\end{split}
\end{align*}
with $C_Y$ defined in \eqref{eq:C^Y_s}.
Consequently, by using \eqref{eq:Y_lipschitz}, and \eqref{eq:Y_lipschitz_term1}
and the identity
that
$e^{-\f{\widetilde{\a}}{2} t}
\int_t^T e^{\frac{\widetilde{\a}}{2} s}
\,\d s 
=\f{2}{\widetilde{\a}}
(e^{\frac{\widetilde{\a}}{2} (T-t)} -1)
$, 
\begin{align*}
 \begin{split}
|Y^{t,x,\phi}_t-Y^{t,x',\phi}_t|
&\le 
C\bigg[
L_g
e^{ (T-t) \tfrac{\widetilde{\alpha}}{2} +
T\big(
2(\kappa_{\hat{b}} 
  +L_{\bar{b}}[\phi]_1)
  +C\big)_{+}}
\\
&\q 
+\bigg(
\Big(
(1+
L_{\bar{b}}[\phi]_1)
(C_g
+C_{fx}T)
e^{(\kappa_{\hat{b}} -{
\rho} + C )_+T}
+L_{fx}(1+[\phi]_1)
 \Big)\f{2}{\widetilde{\a}}
(e^{\frac{\widetilde{\a}}{2} (T-t)} -1)
\\
&\q 
+
\sqrt{T}
\Big(
e^{\frac{\widetilde{\a} }{2}(T-t)}C_g
+C_{fx}
\f{2}{\widetilde{\a}}
(e^{\frac{\widetilde{\a}}{2} (T-t)} -1)
\Big)\bigg)
e^{T\big(2(
\kappa_{\hat{b}} 
  +L_{\bar{b}}[\phi]_1)
+C\big)_{+}}
\bigg]|x-x'|.
\end{split}
\end{align*}
Then the facts that 
for all $\widetilde{\a}\in \sR$,  $[0,T]\ni t\mapsto \f{2}{\widetilde{\a}}
(e^{\frac{\widetilde{\a}}{2} (T-t)} -1)\in (0,\infty)$ is maximized at $t=0$, 
and $\widetilde{\alpha}=2(\kappa_{\hat{b}} - {
\rho}+L_{\bar{b}}+L_{\sigma}^2)$  
lead to the desired Lipschitz estimate uniformly in $t$. 
\end{proof}

With Proposition \ref{prop:Y_stability} in hand, 
we   prove that under suitable assumptions,  for any initial guess $\phi^0\in \cV_\bA$,
the sequence of feedback controls $(\phi^m)_{m\in\sN_0}$
generated by 
\eqref{eq:phi_update}
is uniformly Lipschitz continuous. 
For notational simplicity, 
let $C>0$ be a  constant such that 
\eqref{eq:Y_uniform_bdd} 
and 
\eqref{eq:Y_Lipschitz_estimate_statement}
hold,
  $C_Y$ and 
  $\alpha$ be  defined
in 
 \eqref{eq:Y_uniform_bdd} and 
\eqref{eq:L_Y_phi},
respectively,
and for each 
 $\phi^0\in \cV_\bA$,
 define 
\begin{align}
\label{eq:constants_lipschitz_statement}
\begin{split}
   A_1 & \coloneqq  C_{\bar{b}} C 
   \bigg(
  (L_g + \sqrt{T}
 C_g) e^{\a_+ T}
+ \tfrac{e^{\a T}-1}{\a}
\Big(
(C_g
+C_{fx}T)
e^{\a_+ T}
 +L_{fx} + \sqrt{T} C_{fx}\Big)
\bigg)
\\
A_2 & \coloneqq C_{\bar{b}} C\tfrac{e^{\a T}-1}{\a}
\Big(
(C_g
+C_{fx}T)
e^{\a_+ T} L_{\bar{b}}+L_{fx}
\Big),
 \\
\mu_0 &  \coloneqq { \frac{1}{4} \left( \frac{\mu L_{fa}}{\mu + L_{fa}} +\nu \right),} 
     \q 
K  \coloneqq 
\max\left\{2TL_{\bar{b}}[\phi^0]_1,
\frac{  2 T L_{\bar{b}}   A_1   +2T L_{\bar{b}} (L_{\bar{b}}C_Y + L_{fa})}{\mu_0}+1
\right\}.
\end{split}
\end{align}

\begin{Theorem}\label{thm:lipschitz_iterates}
Suppose (H.\ref{assum:pgm}) holds.
For each
 $\phi^0\in \cV_\bA$,
 $\tau >0$
 and 
$m\in \mathbb{N}$,
let $\phi^m$ be defined by \eqref{eq:phi_update}
with the initial guess $\phi^0$ and stepsize $\tau$.
Let $C\ge 0$ be a  constant such that 
\eqref{eq:Y_uniform_bdd} and 
\eqref{eq:Y_Lipschitz_estimate_statement}
hold,
let 
 $C_Y \ge 0$ be
  defined  in \eqref{eq:Y_uniform_bdd},
 let $\alpha \in \sR$ be  defined  in  
\eqref{eq:L_Y_phi},
and let $A_1, A_2,
\mu_0, K\ge 0$
be defined  in 
\eqref{eq:constants_lipschitz_statement}.
Then 
for all 
 $\phi^0 \in \cV_\bA$
 satisfying
\begin{align}
\label{eq:condition_lipschitz}
      & 2TL_{\bar{b}}{A}_1   e^{ (2\kappa_{\hat{b}} + C) T + K } 
      \leq \mu_0,
      \q
      \textnormal{and}\q
     A_2 ( e^{(2\kappa_{\hat{b}} + C) T + K } +1 ) \leq \mu_0,
\end{align}
and 
for all 
  { $\tau \in (0, \frac{2}{\mu + L_{fa}}\wedge \frac{1}{\nu}]$}
and
    $m\in \sN_0$,
   \begin{align}
   \label{eq:lipschitz_iterates_constant}
\begin{split}
[{\phi^{m}}]_1
& \leq
L_{(\phi^0)}\coloneqq 
 [{\phi^{0}}]_1  + 
 { \frac{1}{\mu_0}
 \Big(
 L_{\bar{b}}
 C_Y + L_{fa} 
+ {A}_1
\big(
e^{(2\kappa_{\hat{b}} +C) T +  K}
+ 1\big) \Big)}.  
 \end{split}
\end{align}

\end{Theorem}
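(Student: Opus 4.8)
The plan is to derive a one-step recursion for the spatial Lipschitz seminorm $[\phi^{m+1}]_1$ in terms of $[\phi^m]_1$, and then run a bootstrap induction that freezes the exponential growth entering this recursion. Fix $m\in\sN_0$; by Proposition \ref{prop:well_posed_phi} we have $\phi^m\in\cV_\bA$, so the estimates of the preceding results apply. Applying Lemma \ref{Lemma_iteration_1} pointwise in \eqref{eq:phi_update} with $a=\phi^m_t(x)$, $a'=\phi^m_t(x')$, $y=Y^{t,x,\phi^m}_t$, $y'=Y^{t,x',\phi^m}_t$, then dividing by $|x-x'|$ and taking the supremum over $t,x,x'$, together with $\sup_{t,x}|Y^{t,x,\phi^m}_t|\le C_Y$ from Proposition \ref{prop:Y_bound} and the bound $|Y^{t,x,\phi^m}_t-Y^{t,x',\phi^m}_t|\le L_Y([\phi^m]_1)|x-x'|$ from Proposition \ref{prop:Y_stability}, gives (recalling $\mu_0=\tfrac14(\tfrac{\mu L_{fa}}{\mu+L_{fa}}+\nu)$, so the contraction factor of Lemma \ref{Lemma_iteration_1} is $1-2\tau\mu_0$)
\[
[\phi^{m+1}]_1 \le (1-2\tau\mu_0)[\phi^m]_1 + \tau C_{\bar{b}}\,L_Y([\phi^m]_1) + \tau(L_{\bar{b}}C_Y+L_{fa}).
\]

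Next I would rewrite the exponential term cleanly. Factoring $e^{(2L_{\bar{b}}M+2\kappa_{\hat{b}}+C)_+T}$ out of the expression for $L_Y(M)$ in \eqref{eq:L_Y_phi} and splitting the remaining bracket into its $M$-independent and $M$-linear parts, a direct comparison with the constants in \eqref{eq:constants_lipschitz_statement} shows that $C_{\bar{b}}L_Y(M)=e^{(2L_{\bar{b}}M+2\kappa_{\hat{b}}+C)_+T}(A_1+M A_2)$; here the $M$-free part of $C_{\bar{b}}C\cdot(\text{bracket})$ is exactly $A_1$ and its $M$-coefficient is exactly $A_2$. Substituting this identity yields
\[
[\phi^{m+1}]_1 \le (1-2\tau\mu_0)[\phi^m]_1 + \tau e^{(2L_{\bar{b}}[\phi^m]_1+2\kappa_{\hat{b}}+C)_+T}\big(A_1+[\phi^m]_1 A_2\big) + \tau(L_{\bar{b}}C_Y+L_{fa}).
\]

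The heart of the argument is an induction establishing $2TL_{\bar{b}}[\phi^m]_1\le K$ for all $m$, with $K$ as in \eqref{eq:constants_lipschitz_statement}. The base case $m=0$ is immediate since $K\ge 2TL_{\bar{b}}[\phi^0]_1$. For the inductive step, the hypothesis $2TL_{\bar{b}}[\phi^m]_1\le K$ lets me freeze the exponential factor, $e^{(2L_{\bar{b}}[\phi^m]_1+2\kappa_{\hat{b}}+C)_+T}\le E:=e^{(2\kappa_{\hat{b}}+C)T+K}$ (enlarging the generic constant $C$ so that $2\kappa_{\hat{b}}+C\ge0$), whence
\[
[\phi^{m+1}]_1 \le (1-2\tau\mu_0+\tau E A_2)[\phi^m]_1 + \tau(E A_1+L_{\bar{b}}C_Y+L_{fa}).
\]
The second inequality in \eqref{eq:condition_lipschitz}, namely $A_2(E+1)\le\mu_0$, gives $EA_2\le\mu_0$ and collapses the multiplicative factor to $1-\tau\mu_0$; one checks $\tau\mu_0<1$ on the admissible range $\tau\in(0,\tfrac{2}{\mu+L_{fa}}\wedge\tfrac1\nu]$ so this factor lies in $(0,1)$. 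Multiplying by $2TL_{\bar{b}}$ and using the first inequality in \eqref{eq:condition_lipschitz} (to get $2TL_{\bar{b}}EA_1/\mu_0\le1$) together with the defining second term of $K$ (to get $2TL_{\bar{b}}(L_{\bar{b}}C_Y+L_{fa})/\mu_0\le K-1$) yields $2TL_{\bar{b}}[\phi^{m+1}]_1\le(1-\tau\mu_0)K+\tau\mu_0 K=K$, closing the induction. Since the linearised recursion $[\phi^{m+1}]_1\le(1-\tau\mu_0)[\phi^m]_1+\tau(EA_1+L_{\bar{b}}C_Y+L_{fa})$ now holds for every $m$, summing the geometric series with ratio $1-\tau\mu_0$ gives $[\phi^m]_1\le[\phi^0]_1+\tfrac{1}{\mu_0}(EA_1+L_{\bar{b}}C_Y+L_{fa})\le L_{(\phi^0)}$, the asserted uniform bound.

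The main obstacle is exactly the exponential dependence of $L_Y(M)$ on $M=[\phi^m]_1$, inherited from the feedback-controlled forward dynamics \eqref{sde_feedback}, which a priori could make the iteration blow up. The delicate point is that the bootstrap succeeds only because $K$ and the two smallness conditions in \eqref{eq:condition_lipschitz} are jointly engineered so that, on the invariant region $\{2TL_{\bar{b}}[\phi]_1\le K\}$, the frozen exponential $E$ is a finite constant, the $M$-linear contribution $EA_2$ is absorbed into half the contraction rate $2\tau\mu_0$, and the additive term keeps the next iterate inside the region. By comparison, verifying the algebraic identity for $C_{\bar{b}}L_Y(M)$ and checking $\tau\mu_0<1$ are routine bookkeeping.
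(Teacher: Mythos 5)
Your proposal reproduces the paper's own proof almost step for step — the same one-step recursion obtained from Lemma \ref{Lemma_iteration_1} together with Propositions \ref{prop:Y_bound} and \ref{prop:Y_stability}, the same rescaled bootstrap induction on $2TL_{\bar{b}}[\phi^m]_1\le K$, and the same geometric-series conclusion — but it contains one step that does not hold up: the device of ``enlarging the generic constant $C$ so that $2\kappa_{\hat{b}}+C\ge 0$'' in order to drop the positive part in the exponent of $L_Y$. This is not permissible here, for two reasons. First, the theorem is stated for an \emph{arbitrary} constant $C\ge 0$ for which \eqref{eq:Y_uniform_bdd} and \eqref{eq:Y_Lipschitz_estimate_statement} hold, and both the hypothesis \eqref{eq:condition_lipschitz} and the conclusion \eqref{eq:lipschitz_iterates_constant} are phrased through $A_1,A_2,C_Y,K$ built from that same $C$; replacing $C$ by a larger $C'$ changes all of these constants (note that $C$ enters $C_Y$ both as a prefactor and inside the exponent), and since the left-hand sides of \eqref{eq:condition_lipschitz} increase with $C$, the hypotheses formed with $C'$ are strictly stronger than the ones you are given, so you end up proving a different statement. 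Second, and more damaging in context, the requirement $C\ge -2\kappa_{\hat{b}}$ makes $C$ depend on $\kappa_{\hat{b}}$, which the paper's constant convention explicitly forbids ($\kappa_{\hat{b}}$ is one of the constants whose dependence is tracked explicitly, and the generic $C$ must be independent of it). With $C'\ge -2\kappa_{\hat{b}}$ one gets $C_Y'=C'(C_g+C_{fx}T)e^{(\kappa_{\hat{b}}-\rho+C')_+T}\to\infty$ and $e^{(2\kappa_{\hat{b}}+C')T+K'}\ge e^{K'}\ge 1$ as $\kappa_{\hat{b}}\to-\infty$, so the conditions \eqref{eq:condition_lipschitz} built from the enlarged constant become unsatisfiable precisely in the strongly dissipative regime (condition \ref{item:small_k}) that this theorem is designed to cover; your version of the result is vacuous there.

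The repair is short and is exactly what the paper does: keep the positive part and use $e^{(x)_+T}\le e^{xT}+1$, so that under the induction hypothesis
\begin{align*}
C_{\bar{b}}L_Y([\phi^m]_1)\le \big(A_1+A_2[\phi^m]_1\big)\big(e^{(2\kappa_{\hat{b}}+C)T+K}+1\big),
\end{align*}
which is the reason the factors $e^{(2\kappa_{\hat{b}}+C)T+K}+1$ appear in \eqref{eq:condition_lipschitz} and \eqref{eq:lipschitz_iterates_constant} in the first place. With this replacement your induction closes verbatim: the second inequality in \eqref{eq:condition_lipschitz} absorbs $A_2\big(e^{(2\kappa_{\hat{b}}+C)T+K}+1\big)[\phi^m]_1$ into half the contraction rate $2\tau\mu_0$, while the first inequality together with $\mu_0 K\ge 2TL_{\bar{b}}A_1+2TL_{\bar{b}}(L_{\bar{b}}C_Y+L_{fa})+\mu_0$ (the defining second term of $K$ in \eqref{eq:constants_lipschitz_statement}) handles the additive term, since $2TL_{\bar{b}}A_1\big(e^{(2\kappa_{\hat{b}}+C)T+K}+1\big)\le \mu_0+2TL_{\bar{b}}A_1$. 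Everything else in your proposal — the recursion, the identification of $A_1$ and $A_2$ as the $M$-free and $M$-linear parts of $C_{\bar{b}}L_Y(M)$, the invariance of the region $\{2TL_{\bar{b}}[\phi]_1\le K\}$, and the final geometric sum — coincides with the paper's argument.
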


\begin{proof}

Throughout this proof, let
$\phi^0\in \cV_\bA$ and { $\tau \in (0, \frac{2}{\mu + L_{fa}}\wedge \frac{1}{\nu}]$} be  fixed. 
Suppose that  $\phi^m\in \cV_\bA$
for some $m\in \sN_0$.
For all $(t,x,x') \in [0,T] \times \mathbb{R}^n \times \mathbb{R}^n$,
by applying Lemma \ref{Lemma_iteration_1} with $a=\phi_t^{m}(x)$, $a'=\phi_t^{m}(x')$, $y=Y^{t,x,\phi^m}_t$ and $y'=Y^{t,x',\phi^m}_t$, 
\begin{align*}
|\phi_t^{m+1}(x) - \phi^{m+1}_t(x')| 
&\leq \left(
1 - { \tau\frac{1}{2}
\left(\frac{\mu L_{fa}}{\mu + L_{fa}}
+\nu
\right)}
\right)
|\phi_t^m(x) - \phi_t^{m}(x')| + \tau C_{\bar{b}}  |Y^{t,x,\phi^m}_t -Y^{t,x',\phi^m}_t| \\
&\quad + \tau(L_{\bar{b}}|Y^{t,x',\phi^m}_t| + L_{fa})|x-x'|.
\end{align*}
By using the Lipschitz continuity of $\phi^{m}$,  and the
Lipschitz continuity and uniform boundedness of 
the mapping 
$(t,x)\mapsto  Y^{t,x,\phi^m}_t$ (see Propositions
\ref{prop:Y_bound}
and  \ref{prop:Y_stability}), we further deduce
\begin{align*}
    |\phi_t^{m+1}(x) - \phi^{m+1}_t(x')| & \leq \left( [\phi^{m}]_1 \left(
1 -  { \tau\frac{1}{2}
\left(\frac{\mu L_{fa}}{\mu + L_{fa}}
+\nu
\right)}
\right) + 
    \tau C_{\bar{b}} L_Y([\phi^m]_1)
    +\tau(L_{\bar{b}}C_Y + L_{fa}) \right)|x-x'|,
\end{align*}
where 
$C_Y$
is defined by \eqref{eq:Y_uniform_bdd}
and 
 $L_Y([\phi^m]_1)$
is defined by 
\eqref{eq:L_Y_phi}.
Consequently, we have 
\begin{align}
\label{eq:Lipschitz_induction}
\begin{split}
  [\phi^{m+1}]_1 
& \leq    [\phi^{m}]_1 \left(1- { \tau\frac{1}{2}
\left(\frac{\mu L_{fa}}{\mu + L_{fa}}
+\nu
\right)}\right) +  \tau(L_{\bar{b}}C_Y + L_{fa})  + \tau C_{\bar{b}} L_Y([\phi^m]_1).
   \end{split}
\end{align}

 In the sequel, we aim to establish a uniform bound of 
 $([\phi^m]_1)_{m\in \sN_0}$
 based on \eqref{eq:Lipschitz_induction}.
 Observe from the definition of $L_Y([\phi^m]_1)$
in 
\eqref{eq:L_Y_phi}
 that 
\begin{align}
 L_Y([\phi^m]_1) &\coloneqq  C\bigg[
L_g
e^{ 
\big(2L_{\bar{b}}[\phi^m]_1
+ 2\kappa_{\hat{b}} + C \big)_+
T}
 e^{\alpha_+ T} +\bigg(
\Big(
(1+
L_{\bar{b}}[\phi^m]_1)
(C_g
+C_{fx}T)
e^{(\kappa_{\hat{b}}-{
\rho}+C )_+T}
\nb
\\
&\q 
+L_{fx}(1+[\phi^m]_1)
 \Big)
 \tfrac{e^{\a T} -1}{\a}
 +
\sqrt{T}
\Big(
e^{\a_+ T}C_g
+C_{fx}
\tfrac{e^{\a T} -1}{\a}
\Big)\bigg)
e^{
\big(2L_{\bar{b}}[\phi^m]_1
+ 2\kappa_{\hat{b}} + C \big)_+T
}
\bigg]
\nb
\\
&=
C\bigg[
\bigg(
\big(
L_g
+
\sqrt{T}
C_g
\big)e^{\a_+ T}
+
\Big(
(C_g
+C_{fx}T)
e^{(\kappa_{\hat{b}}-{
\rho}+C )_+T}
+L_{fx}
+\sqrt{T}
 C_{fx}
\Big)
\tfrac{e^{\a T} -1}{\a}
\bigg)
\nb
\\
&\q \times
\Big(
e^{
(2L_{\bar{b}}[\phi^m]_1
+ 2\kappa_{\hat{b}}+C) T
}
+ 1\Big)
\nb
\\
&\q +
\Big(
(C_g
+C_{fx}T)
e^{(\kappa_{\hat{b}}-{
\rho} +C )_+T}
L_{\bar{b}}
+L_{fx}
\Big)
\tfrac{e^{\a T} -1}{\a}
[\phi]_1
\Big(
e^{(2L_{\bar{b}}[\phi^m]_1
+ 2\kappa_{\hat{b}} +C)T}
+ 1\Big)
\bigg].
\label{eq:LC_Y}
\end{align}
Let 
$A_1, A_2, \mu_0, K$ be defined as in 
\eqref{eq:constants_lipschitz_statement}.
Then by writing 
$ \widetilde{A}_1\coloneqq  2 T L_{\bar{b}}   A_1$
and 
$
[\widetilde{\phi^{m}}]_1
  \coloneqq 2 T L_{\bar{b}} [\phi^{m}]_1$ for all $m\in \sN_0$,
   multiplying  both sides of 
\eqref{eq:Lipschitz_induction} by $2T L_{\bar{b}}$
and using \eqref{eq:LC_Y}, we have
\begin{align}
\label{eq:Lipschitz_induction_tilde}
\begin{split}
[\widetilde{\phi^{m+1}}]_1
& \leq   [\widetilde{\phi^{m}}]_1 \left(1-2\mu_0 \tau\right) + 2 \tau
T L_{\bar{b}}(L_{\bar{b}}C_Y + L_{fa})  \\
&\q 
+ \tau 
 \Big( \widetilde{A}_1
\big(
e^{(2\kappa_{\hat{b}} + C) T +  [\widetilde{\phi^{m}}]_1}
+ 1\big) +
A_2[\widetilde{\phi^{m}}]_1
\big(e^{(2\kappa_{\hat{b}} + C) T
+  [\widetilde{\phi^{m}}]_1}
+ 1\big)\Big).
   \end{split}
\end{align}
Now we prove by induction  that 
 $\sup_{m \in \sN_0 }[\widetilde{\phi^{m}}]_1 \leq K$ under the conditions that 
 \begin{align}
 \label{eq:assum_lipschitz}
      & \widetilde{A}_1   e^{ (2\kappa_{\hat{b}} +C) T+ K } 
      \leq \mu_0,
      \q
      \textnormal{and}\q
     A_2 ( e^{ (2\kappa_{\hat{b}} +C)  T + K } +1 ) \leq \mu_0.
\end{align}
 The statement holds  for $m=0$ by the definition of $K$. 
 Suppose that $[\widetilde{\phi^{m}}]_1 \leq K$ for some 
 $m\in \sN_0$.
Then by 
the induction hypothesis, 
\eqref{eq:Lipschitz_induction_tilde} and 
\eqref{eq:assum_lipschitz},
\begin{align}
\begin{split}
[\widetilde{\phi^{m+1}}]_1
& \leq   [\widetilde{\phi^{m}}]_1 \left(1-2\mu_0 \tau\right) + 2 \tau
T L_{\bar{b}}(L_{\bar{b}}C_Y + L_{fa})  
+ \tau 
 \Big( \widetilde{A}_1
+
\mu_0 +\mu_0 [\widetilde{\phi^{m}}]_1
\Big)
\\
&=
  [\widetilde{\phi^{m}}]_1 \left(1-\mu_0 \tau\right) + 
  \tau
  \Big( 2 
T L_{\bar{b}}(L_{\bar{b}}C_Y + L_{fa})  
+ \widetilde{A}_1
+
 \mu_0
\Big)
\\
&\le 
K \left(1-\mu_0 \tau\right) + 
  \tau \mu_0 K
  \le K.
   \end{split}
\end{align}
This finishes the proof of the fact 
 $\sup_{m \in \sN_0 }[\widetilde{\phi^{m}}]_1 \leq K$. 
Substituting  this a-priori bound into  \eqref{eq:Lipschitz_induction_tilde} 
and using \eqref{eq:assum_lipschitz}
give that
 \begin{align*}
\begin{split}
[\widetilde{\phi^{m+1}}]_1
& \leq   [\widetilde{\phi^{m}}]_1 \left(1-2\mu_0 \tau\right) + 2 \tau
T L_{\bar{b}}(L_{\bar{b}}C_Y + L_{fa}) 
+ \tau 
 \Big( \widetilde{A}_1
\big(
e^{ (2\kappa_{\hat{b}} +C)  T + K }
+ 1\big)  \\
& \qquad +
A_2[\widetilde{\phi^{m}}]_1
\big(e^{(2\kappa_{\hat{b}} +C)  T + K  }
+ 1\big)\Big)
\\
& \leq
 [\widetilde{\phi^{m}}]_1 \left(1-2\mu_0 \tau\right) + 2 \tau
T L_{\bar{b}}(L_{\bar{b}}C_Y + L_{fa})  
+ \tau 
 \Big( \widetilde{A}_1
\big(
e^{(2\kappa_{\hat{b}} +C)  T + K }
+ 1\big) 
+\mu_0 [\widetilde{\phi^{m}}]_1 \Big)
\\
& \leq
 [\widetilde{\phi^{m}}]_1 \left(1-\mu_0 \tau\right) + 
  \tau 
 \Big(
 2 T L_{\bar{b}}(L_{\bar{b}}C_Y + L_{fa})   
+  \widetilde{A}_1
\big(
e^{(2\kappa_{\hat{b}} +C)  T + K }
+ 1\big) \Big),
   \end{split}
\end{align*}
 from which one can deduce that 
 for all $m\in \sN_0$,
  \begin{align*}
\begin{split}
[\widetilde{\phi^{m}}]_1
& \leq
 [\widetilde{\phi^{0}}]_1  + 
  \frac{1}{\mu_0}
 \Big(
 2 T L_{\bar{b}}(L_{\bar{b}}C_Y + L_{fa})   
+ \widetilde{A}_1
\big(
e^{ (2\kappa_{\hat{b}} +C)  T + K }
+ 1\big) \Big).
   \end{split}
\end{align*}
Dividing both sides of the above inequality by $2TL_{\bar{b}}$ shows 
  \begin{align*}
\begin{split}
[{\phi^{m}}]_1
& \leq
 [{\phi^{0}}]_1  + 
 { \frac{1}{\mu_0}
 \Big(
 L_{\bar{b}}
 C_Y + L_{fa} 
+ {A}_1
\big(
e^{ (2\kappa_{\hat{b}} +C)  T + K  }
+ 1\big) \Big)}.
   \end{split}
\end{align*}
with 
constants 
 $A_1,K,\mu_0$ defined as in \eqref{eq:constants_lipschitz_statement}.
\end{proof}

\subsection{Contraction   
in a weighted sup-norm}

Based on the uniform Lipschitz continuity of 
$(\phi^m)_{m\in \sN_0}$ in Theorem \ref{thm:lipschitz_iterates},
we prove 
the contractivity of 
the iterates  $(\phi^m)_{m\in \sN_0}$ with respect to the  weighted sup-norm norm
$|\cdot|_0$ (see Definition \ref{def:fb}).

The following proposition estimates  the Lipschitz stability of the adjoint process $ Y^{t,x,\phi}$
with respect to the  feedback control $\phi\in \cV_\bA$. 

\begin{Proposition}\label{prop:Y_stability2}
Suppose (H.\ref{assum:pgm}) holds.
For each
 $\phi\in \cV_\bA$ and 
$(t,x)\in [0,T]\t \sR^n$,
let $(Y^{t,x,\phi},Z^{t,x,\phi})\in \cS^2(t,T;\sR^n)\t \cH^2(t,T;\sR^{n\t d})$ 
be defined by 
\eqref{bsde_feedback}. Suppose that for all $\phi' \in \cV_\bA$ and $(t,s,x)\in [0,T] \t [t,T] \t  \sR^n$, it holds with some constant  $C_Z^{\phi'}\ge 0$  that 
$|Z_s^{t,x,\phi'}|\le C_Z^{\phi'}$
for $\d t \otimes \d \sP$-a.e.
Then 
there exists a  constant 
$C\ge 0$  such that
for all 
 $\phi, \phi' \in \cV_\bA$ and
$(t,x)\in [0,T]\t  \sR^n$,
\begin{align}\label{eq:Y_difference_phi_statement}
\begin{split}
&|Y^{t,x,\phi}_t-Y^{t,x,\phi'}_t| \le B[\phi,\phi',C_Z^{\phi'}]
(1+|x|)|\phi-\phi'|_0,
  \end{split}
\end{align}
where the constant $B[\phi,\phi',C_Z^{\phi'}]$ is defined by 
\begin{align}\label{eq:Y_difference_phi_constant}
\begin{split}
B[\phi,\phi',C_Z^{\phi'}]
& \coloneqq 
C C_{\bar{b}} \Big(1+ {T}
+ T  C_{\bar{b}} \sup_{t\in [0,T]}|\phi'_t(0)|
\Big)     e^{T
\beta_{+}}
\bigg( L_g
{
\mathfrak{m}^{1/2}_{(\alpha, \beta)}}
\\
&\q + \tfrac{e^{T \a } -1}{{\a}}
\Big[\Big(
( C_Y
+L_{fx})(1+[\phi]_1)
+
C_Z^{\phi'} 
\Big)
T
e^{T
\beta_{+}}+L_{fx} +  C_Y 
 \Big]\bigg),
\\
\beta
&\coloneqq
2\kappa_{\hat{b}} 
  + 2L_{\bar{b}}\max \{ [\phi]_1,[\phi']_1 \}+C, \quad { 
  \mathfrak{m}_{(\alpha, \beta)} \coloneqq \sup_{ t \in [0,T]} e^{2 \alpha (T-t)}    \int_{t}^{T} e^{(T-s)\b} \, \mathrm{d}s,}
  \end{split}
\end{align}
with  $C_Y$ and $\alpha $
 defined as in 
\eqref{eq:Y_uniform_bdd}
and 
 \eqref{eq:L_Y_phi},
 respectively. 
\end{Proposition}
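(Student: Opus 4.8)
The plan is to mirror the structure of the proof of Proposition \ref{prop:Y_stability}, but now perturbing the feedback control rather than the spatial argument. Fix $(t,x)\in[0,T]\t\sR^n$ and $\phi,\phi'\in\cV_\bA$, write $X^\phi=X^{t,x,\phi}$, $X^{\phi'}=X^{t,x,\phi'}$ for the forward processes \eqref{sde_feedback} started at $x$ at time $t$, and set $\Delta X=X^\phi-X^{\phi'}$. Define the drivers $\bar f^1_s(\cdot,y,z)=\partial_x H_s(X^\phi_s,\phi_s(X^\phi_s),y,z)$ and $\bar f^2_s(\cdot,y,z)=\partial_x H_s(X^{\phi'}_s,\phi'_s(X^{\phi'}_s),y,z)$, so that $(Y^{t,x,\phi},Z^{t,x,\phi})$ and $(Y^{t,x,\phi'},Z^{t,x,\phi'})$ solve \eqref{bsde_feedback} with drivers $\bar f^1,\bar f^2$ and terminal data $\partial_x g(X^\phi_T),\partial_x g(X^{\phi'}_T)$. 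As in \eqref{eq:bar_f_1}, Lemma \ref{lemma:H_gradient_estimate} shows $\bar f^1$ is $L_\sigma$-Lipschitz in $z$ and $(\kappa_{\hat b}-\rho+L_{\bar b})$-monotone in $y$, so Lemma \ref{BSDE:apriori} applies with $\widetilde\alpha=2\alpha=2(\kappa_{\hat b}-\rho+L_{\bar b}+L_\sigma^2)$ and $\eps=1/2$.

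First I would control the forward difference. Writing $b^1_s(y)=b_s(y,\phi_s(y))$ and $b^2_s(y)=b_s(y,\phi'_s(y))$ and using \eqref{eq:affine}--\eqref{eq:b_bound}, the two drifts share the diffusion $\sigma$, the monotonicity constant of $b^1$ is $\kappa_{\hat b}+L_{\bar b}(1+[\phi]_1)$ (computed in Proposition \ref{prop:Y_stability}), and their pointwise gap obeys $|b^1_s(y)-b^2_s(y)|=|\bar b_s(y)(\phi_s(y)-\phi'_s(y))|\le C_{\bar b}(1+|y|)|\phi-\phi'|_0$. Feeding this into Lemma \ref{forward:apriori_p} --- both the $\cS^p$ bound (for the driver integral, which weights $|\Delta X_s|$ over all $s$) and, crucially, the sharper identity \eqref{eq:X_b_difference} for $\sE[|\Delta X_T|^2]$ --- together with a standard moment estimate for $X^{\phi'}$ (whose drift has linear-growth constant involving $C_{\bar b}\sup_t|\phi'_t(0)|$, producing the factor $1+T+TC_{\bar b}\sup_t|\phi'_t(0)|$ after extracting $1+|x|$), bounds $\Delta X$ by $|\phi-\phi'|_0$ times the forward stability factor $e^{T\beta_+}$, where $\beta=2\kappa_{\hat b}+2L_{\bar b}\max\{[\phi]_1,[\phi']_1\}+C$ dominates the exponents $2\mu_1+\nu_1^2+1$ from \eqref{eq:X_b_difference} for either ordering of the two controls.

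Next I would estimate the two terms produced by Lemma \ref{BSDE:apriori}. For the terminal term, $|\partial_x g(X^\phi_T)-\partial_x g(X^{\phi'}_T)|\le L_g|\Delta X_T|$ by \eqref{eq:g_bound}; combining the BSDE weight $e^{2\alpha(T-t)}$ at time $t$ with the time-weighted bound \eqref{eq:X_b_difference} for $\sE[|\Delta X_T|^2]$ is precisely what manufactures the quantity $\mathfrak{m}_{(\alpha,\beta)}=\sup_t e^{2\alpha(T-t)}\int_t^T e^{(T-s)\beta}\,\d s$ and hence the $L_g\,\mathfrak{m}^{1/2}_{(\alpha,\beta)}$ contribution. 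For the driver term, Lemma \ref{lemma:H_gradient_estimate} bounds $|\bar f^1_s-\bar f^2_s|$ along $(Y^{t,x,\phi'}_s,Z^{t,x,\phi'}_s)$ by $|\Delta X_s|$ and the control gap $|\phi_s(X^\phi_s)-\phi'_s(X^{\phi'}_s)|$ weighted by $|Y^{t,x,\phi'}_s|$ and $|Z^{t,x,\phi'}_s|$; I would split $|\phi_s(X^\phi_s)-\phi'_s(X^{\phi'}_s)|\le[\phi]_1|\Delta X_s|+(1+|X^{\phi'}_s|)|\phi-\phi'|_0$, and invoke the uniform bound $|Y^{t,x,\phi'}_s|\le C_Y$ from Proposition \ref{prop:Y_bound} and the standing hypothesis $|Z^{t,x,\phi'}_s|\le C_Z^{\phi'}$. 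Integrating against the weight $e^{\alpha s}$ yields the $\frac{e^{T\alpha}-1}{\alpha}$ factor and the bracketed combination of $C_Y,L_{fx},[\phi]_1$ and $C_Z^{\phi'}$ in \eqref{eq:Y_difference_phi_constant}: the pieces multiplying $|\Delta X_s|$ acquire the extra $T e^{T\beta_+}$ from the $\cS^p$ stability and a Cauchy--Schwarz $\sqrt T\cdot\sqrt T$, whereas the piece multiplying $(1+|X^{\phi'}_s|)|\phi-\phi'|_0$ needs only the moment bound already collected in the common prefactor $CC_{\bar b}(1+T+TC_{\bar b}\sup_t|\phi'_t(0)|)e^{T\beta_+}$. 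Assembling both contributions gives \eqref{eq:Y_difference_phi_statement} with the constant $B[\phi,\phi',C_Z^{\phi'}]$.

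The main obstacle is the exponential bookkeeping: the forward perturbation accumulates at rate $\beta$ while the BSDE is weighted at rate $2\alpha$, and reconciling these rates so that the terminal contribution collapses exactly into $\mathfrak{m}_{(\alpha,\beta)}$ --- rather than a cruder product of two separate exponentials --- is the delicate point, requiring that the weight $e^{2\alpha(T-t)}$ and the integral $\int_t^T e^{(T-s)\beta}\,\d s$ be kept together under a single $\sup_t$. A secondary subtlety is that the \emph{pointwise} bound $C_Z^{\phi'}$, rather than a mere $\cH^2$ bound, is genuinely needed, since the driver gap contains the term $L_\sigma|\Delta X_s||Z^{t,x,\phi'}_s|$, which must be controlled trajectory-wise before integration; this is exactly why the proposition carries $C_Z^{\phi'}$ as a hypothesis, to be supplied later by the Malliavin estimate of Lemma \ref{lem:MD}.
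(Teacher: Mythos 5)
Your proposal is correct and follows essentially the same route as the paper's proof: the same application of Lemma \ref{BSDE:apriori} with drivers built from the two controlled forward processes, the same forward stability estimates from Lemma \ref{forward:apriori_p} (including the weighted terminal bound \eqref{eq:X_b_difference}, which is what produces $\mathfrak{m}_{(\alpha,\beta)}$), and the same splitting of the driver gap into a state-difference part and a control-gap part, bounded via $C_Y$ and the hypothesized pointwise bound $C_Z^{\phi'}$. The two subtleties you flag at the end --- keeping $e^{2\alpha(T-t)}$ and $\int_t^T e^{(T-s)\beta}\,\d s$ together under a single supremum, and the genuine need for a pointwise rather than $\cH^2$ bound on $Z^{t,x,\phi'}$ because of the term $L_\sigma|\Delta X_s||Z^{t,x,\phi'}_s|$ --- are precisely the points the paper's proof handles in the same way.
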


\begin{proof}
Let 
$\bar{f}^1,\bar{f}^2:[t,T]\t \Om \t 
 \sR^n  \t \sR^{n\t d}
\to \sR^n$ be such that
for all $(s,\om,y,z)\in [t,T]\t \Om \t 
 \sR^n  \t \sR^{n\t d}$,
 \begin{align*}
 \bar{f}^1_s(\om,y,z)&= \partial_x H_s(X^{t,x,\phi}_s(\om), \phi_s(X^{t,x,\phi}_s(\om)),y,z),
 \\
 \bar{f}^2_s(\om,y,z) &= \partial_x H_s(X^{t,x,\phi'}_s(\om), \phi'_s(X^{t,x,\phi'}_s(\om)),y,z),
 \end{align*}
where $H$ is defined in \eqref{eq:Hamiltonian},
and 
for each
 $\phi\in \cV_\bA$ and 
$(t,x)\in [0,T]\t \sR^n$,
 $X^{t,x,\phi}\in \cS^2(t,T;\sR^n)$
is defined by 
\eqref{sde_feedback}.
By using 
\eqref{eq:bar_f_1} and 
 applying Lemma \ref{BSDE:apriori}
with $p=1$, $f^1=\bar{f}^1$,  $\xi^1=\p_x g(X^{t,x,\phi}_T)$, $f^2=\bar{f}^2$, $\xi^2=\p_x g(X^{t,x,\phi'}_T)$,
$(Y^2,Z^2)=(Y^{t,x,\phi'},Z^{t,x,\phi'})$
and 
 $\eps=1/2$,
  it holds with  an absolute constant $C\ge 0$ that
 \begin{align}
 \label{eq:Y_growth}
 \begin{split}
&\sE\bigg[\sup_{s\in [t,T]}e^{\widetilde{\a} s}
|Y^{t,x,\phi}_s-Y^{t,x,\phi'}_s|^2
+\int_t^Te^{\widetilde{\a} s}
|Z^{t,x,\phi}_s-Z^{t,x,\phi'}_s|^2
\, \d s\bigg]
\\
&\le
C\sE\bigg[ e^{\widetilde{\a} T}
|\p_x g(X^{t,x,\phi}_T)-\p_x g(X^{t,x,\phi'}_T)|^2
\\
&\q 
+\bigg(\int_t^Te^{\frac{\widetilde{\a}}{2}s}
|\bar{f}^1_s(\cdot, Y^{t,x,\phi'}_s,Z^{t,x,\phi'}_s)-\bar{f}^2_s(\cdot, Y^{t,x,\phi'}_s,Z^{t,x,\phi'}_s)|\,\d s\bigg)^2\bigg],
\end{split}
\end{align}
where we defined $\widetilde{\alpha} \coloneqq 2(\kappa_{\hat{b}} - {
\rho }+L_{\bar{b}}+L_{\sigma}^2)$
above and hereafter.
In the subsequent analysis, we denote by 
$C\ge 1$ a generic constant independent of 
$T,\rho, \kappa_{\hat{b}},
C_{\bar{b}},
C_{fx}, L_{fx}, \mu,\nu, L_{fa}, C_g, L_g$.

To estimate the right-hand side of 
\eqref{eq:Y_growth},
we first quantify the dependence of $X^{t,x,\phi}$
on $\phi$.
For all $(t,x)\in [0,T]\t \sR^n$,
let $\Delta X^{t,x}=X^{t,x,\phi} - {X}^{t,x,\phi'}$.
Similar to \eqref{eq:X_lipschitz_phi},
by Lemma \ref{forward:apriori_p} with $p = 2$,
$b^1_t(x)=b_t(x,\phi_t(x))$ and 
$b^2_t(x)=b_t(x,\phi'_t(x))$,
 for all $(t,x) \in [0,T]\t  \sR^n$, $\phi, \phi' \in \cV_\bA$,
\begin{align}
\label{eq:X_lipschitz_phi2}
\| \Delta X^{t,x}
\|_{\cS^2 }
& \le 
C_{\bar{b}} \sqrt{T} 
M_{([\phi]_1)}
\| \phi({X}^{t,x,\phi'})- \phi'({X}^{t,x,\phi'}) \|_{\cH^2 }
\nb
\\
&\le 
C_{\bar{b}} T
M_{([\phi]_1)}
\| \phi({X}^{t,x,\phi'})- \phi'({X}^{t,x,\phi'}) \|_{\cS^2 },
\end{align}
where the constant $M_{([\phi]_1)}$ is defined by
\bb \label{eq:MT}
M_{([\phi]_1)}
\coloneqq 
C
e^{T\big(2(
\kappa_{\hat{b}} 
  +L_{\bar{b}}(1+[\phi]_1))
+C  L_{\sigma}^2\big)_{+}}
\le 
C 
e^{2T\big(
\kappa_{\hat{b}} 
  +L_{\bar{b}}[\phi]_1
+C \big)_{+}}.
\ee
Moreover, by using the fact that $|\phi(x)-\phi'(x)|\le  |\phi-\phi'|_0(1+|x|)$,
\begin{align}
\label{eq:control_linear}
\| \phi({X}^{t,x,\phi'})- \phi'({X}^{t,x,\phi'}) \|_{\cS^2 }
\le  (1+\|{X}^{t,x,\phi'}\|_{\cS^2})|\phi-\phi'|_0.
\end{align}
We estimate $\|{X}^{t,x,\phi'}\|_{\cS^2}$,
by setting $\|\phi'(0)\|_\infty=\sup_{t\in [0,T]}|\phi'_t(0)|$
and applying
Lemma \ref{forward:apriori_p} with $p = 2$, 
$x_1=x$, 
$b^1_t(x)=b_t(x,\phi'_t(x))$,
$\sigma^1_t(x)=\sigma_t(x)$,
$x_2=0$,
$b^2_t(x)=0$
and $\sigma^2_t(x)=0$,
\begin{align}
\label{eq:X_moment_bound_p}
\begin{split}
  \| {X}^{t,x,\phi'} \|_{\cS^2 }
               &\le 
   M_{([\phi']_1)}
\big(|x|+\sqrt{T}
\|b(0,\phi'(0))\|_{\cH^2}
+\|\sigma(0)\|_{\cH^2}\big)
\\
 &\le
C     M_{([\phi']_1)}
\big(|x|+ T+ {T}C_{\bar{b}} \|\phi'(0)\|_\infty+\sqrt{T}
\big)
\\
&\le 
C     M_{([\phi']_1)}
\big(1+ {T}
+TC_{\bar{b}} \|\phi'(0)\|_\infty
\big)(1+|x|),
\end{split}
\end{align}
which along with 
\eqref{eq:X_lipschitz_phi2},
 \eqref{eq:control_linear}
 and $ M_{([\phi']_1)} \ge 1$
shows   
\begin{align}
\label{eq:X_difference_phi}
\begin{split}
\| \phi({X}^{t,x,\phi'})- \phi'({X}^{t,x,\phi'}) \|_{\cS^2 }
&\le  C     M_{([\phi']_1)}
\big(1+ {T}
+TC_{\bar{b}} (1+|x|) \|\phi'(0)\|_\infty
\big)|\phi-\phi'|_0,
\\
\|\Delta X^{t,x}\|_{\cS^2 }
& \le 
C C_{\bar{b}} T
 M_{([\phi]_1)}
 M_{([\phi']_1)}
\big(1+ {T}
+T C_{\bar{b}} \|\phi'(0)\|_\infty
\big)
(1+|x|) |\phi-\phi'|_0.
\end{split}
\end{align}
Similarly, by setting $\widetilde{\b}=2 (
\kappa_{\hat{b}} 
  +L_{\bar{b}}(1+[\phi]_1)) +L_\sigma^2 +1$
  and using \eqref{eq:X_b_difference}
with  $b^1_t(x)=b_t(x,\phi_t(x))$
and  $b^2_t(x)=b_t(x,\phi'_t(x))$,
   we have
\begin{align}\label{eq:X_phi_terminal}
\begin{split}
    \mathbb{E} \left[ |\Delta X^{t,x}_T |^2 \right]^{\frac{1}{2}} 
   & \le
C  C_{\bar{b}}  \left( \int_{t}^{T} e^{(T-s)\widetilde{\b}} \, \mathrm{d}s \right)^{\frac{1}{2}}  
    \| \phi({X}^{t,x,\phi'})- \phi'({X}^{t,x,\phi'}) \|_{\cS^2 }
    \\& \leq 
    C C_{\bar{b}}
   {
   \left( \int_{t}^{T} e^{(T-s)\b} \, \mathrm{d}s \right)^{\frac{1}{2}}  }
     M_{([\phi']_1)}
\big(1+ {T}
+ TC_{\bar{b}} \|\phi'(0)\|_\infty
\big)
(1+|x|)
 |\phi-\phi'|_0,
\end{split}
\end{align}
with $\beta\coloneqq 
2
(\kappa_{\hat{b}} 
  +L_{\bar{b}}[\phi]_1) +C$,
  where the last inequality used \eqref{eq:X_difference_phi}.
 
Now we are ready to estimate the right-hand side of 
\eqref{eq:Y_growth}.
By using \eqref{eq:X_phi_terminal}, 
\begin{align}
 \label{eq:Y_growth_terminal}
\begin{split}
&e^{-\frac{\widetilde{\alpha}}{2}t} \sE\Big[ e^{\widetilde{\a} T}
|\p_x g(X^{t,x,\phi}_T)-\p_x g(X^{t,x,\phi'}_T)|^2
\Big]^{\frac{1}{2}}
\le 
L_g { 
e^{\frac{\widetilde{\alpha}}{2}(T-t)} }
\mathbb{E} \left[|\Delta X^{t,x}_T|^2 \right]^{\frac{1}{2}}
\\
& \leq C C_{\bar{b}}  L_g \mathfrak{m}^{1/2}_{(\alpha, \beta)} M_{([\phi']_1)}
\big(1+ {T}
+ T C_{\bar{b}}\|\phi'(0)\|_\infty
\big)
(1+|x|)
 |\phi-\phi'|_0,
\end{split}
\end{align}
{
where we recall  $\mathfrak{m}_{(\alpha, \beta)} = \sup_{t \in [0,T]} e^{ \widetilde{\alpha}(T-t)}      \int_{t}^{T} e^{(T-s)\b} \, \mathrm{d}s$}.
On the other hand, by 
Lemma \ref{lemma:H_gradient_estimate},
for all 
$t\in [0,T]$, $x,x'\in \sR^n$,
$\phi, \phi' \in \cV_\bA$,
$(y,z)\in \sR^n\t \sR^{n\t d}$,
\begin{align*}
&|\partial_x H_t(x, \phi_t(x),y,z)
-\partial_x H_t(x', \phi'_t(x'),y,z)|
\\
&\le 
\big(L_{\hat{b}}|x-x'|+
L_{\bar{b}}(|x-x'|+|\phi_t(x)-\phi'_t(x')|)\big)| y|
+ L_{\sigma}|x-x'||z|
+L_{fx}(|x-x'|+|\phi_t(x)-\phi'_t(x')|)
\\
&\le 
\Big(
\big(L_{\hat{b}}+
L_{\bar{b}}(1+[\phi]_1)\big)  | y|
+ L_{\sigma}|z|+L_{fx}(1+[\phi]_1)
\Big)|x-x'| + (L_{fx} + L_{\bar{b}} |y|) |\phi_t(x')-\phi'_t(x')|.
\end{align*}
This  
along with \eqref{eq:Y_uniform_bdd}
and the assumption that 
$|Z_s^{t,x,\phi}|\le C^{\phi}_Z$
 implies that 
\begin{align*}
 \begin{split}
&
\sE\bigg[ \bigg(\int_t^Te^{\frac{\widetilde{\a}}{2}s}
|\bar{f}^1_s(\cdot, Y^{t,x,\phi'}_s,Z^{t,x,\phi'}_s)-\bar{f}^2_s(\cdot, Y^{t,x,\phi'}_s,Z^{t,x,\phi'}_s)|\,\d s\bigg)^2\bigg]^{\frac{1}{2}}
\\
&\le
\sE\bigg[ \bigg(\int_t^Te^{\frac{\widetilde{\a}}{2}s}
\Big(
(C+
L_{\bar{b}}[\phi]_1) |Y^{t,x,\phi'}_s|
+L_{fx}(1+[\phi]_1) 
+ C |Z^{t,x,\phi'}_s|
\Big)|\Delta X^{t,x}_s|
\,\d s\bigg)^2\bigg]^{\frac{1}{2}}
\\
&\q 
+\sE\bigg[ \bigg(\int_t^Te^{\frac{\widetilde{\a}}{2}s}
\Big(
(L_{fx} + L_{\bar{b}} |Y^{t,x,\phi'}_s|) |\phi_s(X^{t,x,\phi'}_s)-\phi'_s(X^{t,x,\phi'}_s)|
\Big)\, \d s
\bigg)^2\bigg]^{\frac{1}{2}}
\\
&\le
C \bigg(\int_t^Te^{\frac{\widetilde{\a}}{2}s}\, \d s\bigg)
\bigg[\Big(
(1+
[\phi]_1) C_Y
+L_{fx}(1+[\phi]_1)
+  C_Z^{\phi'} 
\Big)
\|\Delta X^{t,x}\|_{\cS^2 }
\\
&\q 
+(L_{fx} +  C_Y) 
 \|\phi(X^{t,x,\phi'})-\phi'(X^{t,x,\phi'})\|_{\cS^2}\bigg].
\end{split}
\end{align*}
 Substituting 
  \eqref{eq:X_difference_phi} 
into the above estimate yields 
\begin{align*}
 \begin{split}
&
\sE\bigg[ \bigg(\int_t^Te^{\frac{\widetilde{\a}}{2}s}
|\bar{f}^1_s(\cdot, Y^{t,x,\phi'}_s,Z^{t,x,\phi'}_s)-\bar{f}^2_s(\cdot, Y^{t,x,\phi'}_s,Z^{t,x,\phi'}_s)|\,\d s\bigg)^2\bigg]^{\frac{1}{2}}
\\
&\le
C C_{\bar{b}} \bigg(\int_t^Te^{\frac{\widetilde{\a}}{2}s}\, \d s\bigg)
\bigg[\Big(
(C_Y
+L_{fx})(1+[\phi]_1)
+  C_Z^{\phi'} 
\Big)
T
 M_{([\phi]_1)}
 M_{([\phi']_1)}
\big(1+ {T}
+T C_{\bar{b}} \|\phi'(0)\|_\infty
\big)
\\
&\q 
+(L_{fx} +  C_Y) 
     M_{([\phi']_1)}
(1+ {T}
+ T C_{\bar{b}}\|\phi'(0)\|_\infty)
 \bigg](1+|x|)|\phi-\phi'|_0,
\end{split}
\end{align*}
 Combining with above estimate with 
 \eqref{eq:Y_growth} and \eqref{eq:Y_growth_terminal},
and using 
$e^{-\f{\widetilde{\a}}{2} t}
\int_t^T e^{\frac{\widetilde{\a}}{2} s}
\,\d s 
\le \f{2}{\widetilde{\a}}
(e^{\frac{\widetilde{\a}}{2}T} -1)
$ with $\widetilde{\alpha} = 2(\kappa_{\hat{b}} -{
\rho} +L_{\bar{b}}+L_{\sigma}^2)$,
we conclude the desired estimate 
\begin{align*}
\begin{split}
|Y^{t,x,\phi}_t-Y^{t,x,\phi'}_t| 
&\le 
C C_{\bar{b}}
(1+|x|)|\phi-\phi'|_0
\big(1+ {T}
+ T C_{\bar{b}} \|\phi'(0)\|_\infty
\big)     M_{([\phi']_1)}
\bigg( L_g { 
\mathfrak{m}^{1/2}_{(\alpha, \beta)}}
\\
&\q + \f{e^{\a T} -1}{{\a}}
\bigg[\Big(
( C_Y
+L_{fx})(1+[\phi]_1)
+ C_Z^{\phi'}
\Big)
T
 M_{([\phi]_1)}
+(L_{fx} +  C_Y) 
 \bigg]\bigg)
  \end{split}
\end{align*}
with $ {\alpha} = \kappa_{\hat{b}} -{
\rho}+L_{\bar{b}}+L_{\sigma}^2$, 
$\beta= 
2
(\kappa_{\hat{b}} 
  +L_{\bar{b}}[\phi]_1) +C$,
  and $M_{([\phi]_1)}$ defined in \eqref{eq:MT}.
\end{proof}
 
The next lemma 
establishes an upper bound of the adjoint process $Z^{t,x,\phi}$
in terms of the Lipschitz constant of $x\mapsto Y^{t,x,\phi}$.
The proof  is given in Appendix \ref{appendix:technical}
and extends 
the arguments of  \cite[Proposition 3.7]{lionnet2015time}
to the present setting where
\eqref{sde_feedback}
has non-Lipschitz drift coefficients and 
multiplicative noises,
and 
\eqref{bsde_feedback} has 
unbounded coefficients in front of $Y$.

\begin{Lemma}\label{lem:MD}
Suppose (H.\ref{assum:pgm}) holds.
For each 
  $\phi\in \cV_\bA$ and 
$(t,x)\in [0,T]\t \sR^n$,
let 
$(Y^{t,x,\phi},Z^{t,x,\phi})\in \cS^2(t,T;\sR^n)\t \cH^2(t,T;\sR^{n\t d})$ 
be defined by 
\eqref{bsde_feedback}.
Then 
for all
$\phi\in \cV_\bA$ and
$(t,x)\in [0,T]\t \sR^n$,
\begin{align}
    |Z^{t,x,\phi}|\le C_Z^{\phi}
    \coloneqq C_\sigma L_Y([\phi]_1),
    \q 
\textnormal{for $\d t \otimes \d \sP$-a.e.,}
\end{align}
where the constant $L_Y([\phi]_1) \ge 0$
is defined by \eqref{eq:L_Y_phi}.

\end{Lemma}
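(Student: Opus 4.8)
The plan is to obtain the pointwise bound on $Z^{t,x,\phi}$ from the Malliavin-calculus representation $Z_s = D_s Y_s$ of the control process, combined with the spatial Lipschitz estimate already proved in Proposition \ref{prop:Y_stability}. First I would establish that, for fixed $\phi \in \cV_\bA$ and $(t,x)$, the forward process $X^{t,x,\phi}$ solving \eqref{sde_feedback} lies in $\mathbb{D}^{1,2}$, that the solution $(Y^{t,x,\phi},Z^{t,x,\phi})$ of \eqref{bsde_feedback} is Malliavin differentiable, and that a suitable version of the control process satisfies the diagonal identity $Z^{t,x,\phi}_s = D_s Y^{t,x,\phi}_s$ for $\d t\otimes \d \sP$-a.e.\ $(s,\omega)$. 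Along the way I would record that the diagonal of the Malliavin derivative of the forward process is $D_s X^{t,x,\phi}_s = \sigma_s(X^{t,x,\phi}_s)$, whence $|D_s X^{t,x,\phi}_s| \le C_\sigma$ by \eqref{eq:sigma}. Because the drift $\hat{b}$ is only one-sided Lipschitz with possibly quadratic growth and $\p_x H$ is unbounded in the state variable, the classical Malliavin-differentiability theorems do not apply directly; I would derive these facts by a truncation argument, replacing $\hat{b}$ and $\p_x H$ by globally Lipschitz coefficients that agree with them on balls of radius $R$, establishing moment and Malliavin-norm bounds that are uniform in $R$ via the monotonicity estimate of Lemma \ref{lemma:H_gradient_estimate} together with Lemmas \ref{forward:apriori_p} and \ref{BSDE:apriori}, and then passing to the limit $R\to\infty$. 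This adapts the scheme of \cite[Proposition 3.7]{lionnet2015time} to the present setting with multiplicative noise, non-Lipschitz drift, and state-unbounded driver.

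Next I would use the Markovian structure of the decoupled FBSDE: by the flow property, $Y^{t,x,\phi}_s = u_s(X^{t,x,\phi}_s)$ for $s\in[t,T]$, where $u_s(\cdot) \coloneqq Y^{s,\cdot,\phi}_s$ is the decoupling field, which by Proposition \ref{prop:Y_stability} is Lipschitz in its spatial argument, uniformly in $s$, with constant at most $L_Y([\phi]_1)$ as defined in \eqref{eq:L_Y_phi}. Applying the chain rule for the Malliavin derivative of a Lipschitz function of a Malliavin-differentiable random variable, the gradient factor depends only on $(s,X^{t,x,\phi}_s)$ and not on $r$, so that $D_r Y^{t,x,\phi}_s = \nabla u_s(X^{t,x,\phi}_s)\, D_r X^{t,x,\phi}_s$ for all $r\le s$, with $|\nabla u_s|\le L_Y([\phi]_1)$ a.e. Specializing to the diagonal $r=s$ and invoking $Z^{t,x,\phi}_s = D_s Y^{t,x,\phi}_s$ gives
$$
|Z^{t,x,\phi}_s| = |\nabla u_s(X^{t,x,\phi}_s)\,\sigma_s(X^{t,x,\phi}_s)| \le L_Y([\phi]_1)\,|\sigma_s(X^{t,x,\phi}_s)| \le C_\sigma\, L_Y([\phi]_1),
$$
for $\d t\otimes \d \sP$-a.e.\ $(s,\omega)$, which is exactly the asserted bound with $C_Z^\phi = C_\sigma L_Y([\phi]_1)$.

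The main obstacle is the first step: justifying Malliavin differentiability and the identity $Z_s = D_s Y_s$ under the weak regularity of the coefficients, since we have only that $\p_x g$ is Lipschitz (so $g\in C^{1,1}$) and $\p_x H$ is Lipschitz in the sense of Lemma \ref{lemma:H_gradient_estimate}, rather than $C^1_b$ coefficients. The monotone, quadratically growing drift $\hat{b}$ and the state-unbounded driver $\p_x H$ rule out a direct appeal to the standard theorems, so the truncation must be accompanied by uniform-in-$R$ estimates that survive the limit. Here the key leverage is that the monotonicity bound $\langle y-y',\p_x H_t(x,a,y,z)-\p_x H_t(x,a,y',z)\rangle \le (\kappa_{\hat{b}}-\rho+L_{\bar{b}})|y-y'|^2$ of Lemma \ref{lemma:H_gradient_estimate} is insensitive to the truncation radius, which together with the Lipschitz bound on $\p_z H$ (the coefficient $L_\sigma$) keeps the a priori estimates of Lemma \ref{BSDE:apriori} uniform, allowing the passage to the limit and the stable identification of $Z$ with the diagonal Malliavin derivative.
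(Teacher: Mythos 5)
Your proposal is correct in substance and shares the paper's skeleton: represent $Z^{t,x,\phi}_s$ as the diagonal Malliavin derivative $D_sY^{t,x,\phi}_s$, use the Markovian decoupling field $u_s(\cdot)=Y^{s,\cdot,\phi}_s$, which is Lipschitz with constant $L_Y([\phi]_1)$ by Proposition \ref{prop:Y_stability}, apply the chain rule together with $D_sX^{t,x,\phi}_s=\sigma_s(X^{t,x,\phi}_s)$, and conclude $|Z^{t,x,\phi}_s|\le C_\sigma L_Y([\phi]_1)$. Where you genuinely differ is in justifying the Malliavin differentiability and the identity $Z_s=D_sY_s$ under the non-Lipschitz drift and state-unbounded driver: you propose truncating $\hat b$ and $\partial_x H$ and passing to the limit with uniform-in-$R$ estimates, whereas the paper (i) quotes \cite{imkeller2019differentiability} for the forward SDE with monotone drift, and (ii) uses a freezing trick for the backward equation: having already obtained $Y^{t,x,\phi}\in\mathbb{L}_{1,p}$ from $Y_s=u_s(X_s)$ and the Lipschitz chain rule, it considers the auxiliary BSDE \eqref{aux:BSDE2}, whose driver $\hat f_r(v)=\partial_xH_r(X_r,\phi_r(X_r),Y_r,v)$ is globally Lipschitz in $v$ (constant $L_\sigma$) because the troublesome unbounded $y$-coefficient has been frozen along the known solution; uniqueness identifies the solution of \eqref{aux:BSDE2} with $(Y,Z)$, and \cite[Proposition 5.3]{el1997backward} then yields differentiability of $Z$ and the diagonal identity in one stroke. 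The freezing trick buys exactly what is hardest in your route: it avoids truncating the unbounded coefficient in the BSDE and hence avoids passing the a.e. identity $Z^R_s=D_sY^R_s$ through a weak limit, which is the least-detailed step of your sketch (it can be carried out, but it requires genuine work on the diagonal trace under weak convergence). One further caution: you write the chain-rule factor as $\nabla u_s(X_s)$, but $u$ is only Lipschitz and $\sigma$ may degenerate, so the law of $X_s$ need not be absolutely continuous and the factor cannot in general be identified with an a.e.\ gradient; the paper instead realises $\partial u$ as a weak $\cH^2$-limit of mollified gradients $\partial_x u^\eps(X^{t,x,\phi})$ and uses only the bound $|\partial u|\le L_Y([\phi]_1)$, which is all your final inequality actually requires.
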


Armed with
 Theorem \ref{thm:phi_bound}, Theorem \ref{thm:lipschitz_iterates} and Proposition \ref{prop:Y_stability2}, 
we   prove that under suitable assumptions,  for any initial guess $\phi^0\in \cV_\bA$,
the sequence of feedback controls $(\phi^m)_{m \in \sN_0}$
generated by 
\eqref{eq:phi_update}
is a contraction with respect to the norm $|\cdot|_0$.

\begin{Theorem}\l{thm:iterate_contration}

Suppose (H.\ref{assum:pgm}) holds.
For each
 $\phi^0\in \cV_\bA$,
 $\tau >0$
 and 
$m\in \mathbb{N}$,
let $\phi^m$ be defined by \eqref{eq:phi_update}
with the initial guess $\phi^0$ and stepsize $\tau$.
Let $C\ge 0$ be a  constant such that 
\eqref{eq:Y_uniform_bdd}, 
\eqref{eq:Y_Lipschitz_estimate_statement}
and 
\eqref{eq:Y_difference_phi_statement}
hold,
let 
 $C_Y \ge 0 $ be  defined  in \eqref{eq:Y_uniform_bdd},
 and let
   $\alpha \in \sR$
 be defined in
\eqref{eq:L_Y_phi}.
For each 
$\phi^0\in \cV_\bA$ and $M\ge 0$,
let  $C_{(\phi^0)}\ge 0$
be defined in
\eqref{eq:phi_bound_constant},
let $L_{(\phi^0)}\ge 0$
be defined in 
\eqref{eq:lipschitz_iterates_constant}, 
and let 
$L_Y(M)\ge 0$ be defined in \eqref{eq:L_Y_phi}.
Then 
for all 
 $\phi^0 \in \cV_\bA$,
 if we assume further that  \eqref{eq:condition_lipschitz} 
 holds and 
\begin{align}
\label{eq:constraction_condition}
  C 
    (1+ {T}
+ T C_{\bar{b}} C_{(\phi^0)}
)     e^{  T
\beta_{+}} 
(T
e^{ T
 \beta_{+}}+1) 
B_{(\phi^0)}
    < { \frac{1}{2}
\left(\frac{\mu L_{fa}}{\mu + L_{fa}}
+\nu
\right)},
\end{align}
with the constants
$\beta\in \sR$,
$\mathfrak{m}_{(\alpha, \beta)}>0$
and 
$B_{(\phi^0)}\ge 0$  defined by  
\begin{align}\label{const:B}
\begin{split}
 \beta &\coloneqq 2 \kappa_{\hat{b}} 
  +2L_{\bar{b}}L_{(\phi^0)}+C, \quad { 
  \mathfrak{m}_{(\alpha, \beta)}  \coloneqq \sup_{t \in [0,T]} e^{2\alpha (T-t)}      \int_{t}^{T} e^{(T-s)\b} \, \mathrm{d}s}, \\
{B}_{(\phi^0)} & 
\coloneqq   
 C^2_{\bar{b}} 
 \Big[
 L_g {
 \mathfrak{m}^{1/2}_{(\alpha, \beta)}} 
 + \tfrac{e^{T \a} -1}{{\a}}
\Big(
( C_Y
+L_{fx})(1+L_{(\phi^0)})
+  C_\sigma L_Y\big(L_{(\phi^0)}\big)
\Big)
 \Big],
 \end{split}
\end{align}
then for  all
 { $\tau \in (0, \frac{2}{\mu + L_{fa}}\wedge \frac{1}{\nu}]$},
 there exists a constant $c \in [0,1)$ such that
\begin{align}\label{contraction}
 |\phi^{m+1}-\phi^{m}|_0 \leq c |\phi^m-\phi^{m-1}|_0,   
 \q \fa m\in \sN.
\end{align} 
\end{Theorem}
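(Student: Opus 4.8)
The plan is to derive a one-step recursion for $|\phi^{m+1}-\phi^m|_0$ and to show that its multiplicative factor is strictly below one under \eqref{eq:constraction_condition}. Fix $m\in\sN$ and evaluate the update \eqref{eq:phi_update} for both $\phi^{m+1}$ and $\phi^{m}$ at the \emph{same} pair $(t,x)$. Applying Lemma \ref{Lemma_iteration_1} with $a=\phi^m_t(x)$, $a'=\phi^{m-1}_t(x)$, $y=Y^{t,x,\phi^m}_t$, $y'=Y^{t,x,\phi^{m-1}}_t$ and $x=x'$, the spatial term $\tau(L_{\bar{b}}|y'|+L_{fa})|x-x'|$ vanishes, leaving
\begin{align*}
|\phi^{m+1}_t(x)-\phi^m_t(x)|
&\le \Big(1-\tfrac{\tau}{2}\big(\tfrac{\mu L_{fa}}{\mu+L_{fa}}+\nu\big)\Big)|\phi^m_t(x)-\phi^{m-1}_t(x)|
+\tau C_{\bar{b}}\,|Y^{t,x,\phi^m}_t-Y^{t,x,\phi^{m-1}}_t|.
\end{align*}

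First I would control the adjoint difference through Proposition \ref{prop:Y_stability2}, whose hypothesis on the uniform boundedness of $Z^{t,x,\phi^{m-1}}$ is furnished by Lemma \ref{lem:MD} with $C_Z^{\phi^{m-1}}=C_\sigma L_Y([\phi^{m-1}]_1)$. This yields $|Y^{t,x,\phi^m}_t-Y^{t,x,\phi^{m-1}}_t|\le B[\phi^m,\phi^{m-1},C_Z^{\phi^{m-1}}](1+|x|)|\phi^m-\phi^{m-1}|_0$, so that dividing by $(1+|x|)$ and taking the supremum over $(t,x)$ gives
\begin{align*}
|\phi^{m+1}-\phi^m|_0\le \Big(1-\tfrac{\tau}{2}\big(\tfrac{\mu L_{fa}}{\mu+L_{fa}}+\nu\big)+\tau C_{\bar{b}}\,B[\phi^m,\phi^{m-1},C_Z^{\phi^{m-1}}]\Big)|\phi^m-\phi^{m-1}|_0.
\end{align*}

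The hard part will be to bound $C_{\bar{b}}\,B[\phi^m,\phi^{m-1},C_Z^{\phi^{m-1}}]$ uniformly in $m$, because the constant $B$ in \eqref{eq:Y_difference_phi_constant} grows exponentially in $[\phi^m]_1$ and $[\phi^{m-1}]_1$ via $\beta=2\kappa_{\hat{b}}+2L_{\bar{b}}\max\{[\phi^m]_1,[\phi^{m-1}]_1\}+C$. Here I would invoke the a-priori estimates already established: the uniform Lipschitz bound $[\phi^m]_1\le L_{(\phi^0)}$ from Theorem \ref{thm:lipschitz_iterates} (which, by the monotonicity of $M\mapsto L_Y(M)$, also forces $C_Z^{\phi^{m-1}}\le C_\sigma L_Y(L_{(\phi^0)})$) and the uniform bound $\sup_{m,t}|\phi^m_t(0)|\le C_{(\phi^0)}$ from Theorem \ref{thm:phi_bound}. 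Substituting these into \eqref{eq:Y_difference_phi_constant}, replacing $\beta$ by its admissible upper value $2\kappa_{\hat{b}}+2L_{\bar{b}}L_{(\phi^0)}+C$ of \eqref{const:B} (under which $e^{T\beta_+}$ and $\mathfrak{m}^{1/2}_{(\alpha,\beta)}$ only increase), and exploiting $L_{fx}+C_Y\le(C_Y+L_{fx})(1+L_{(\phi^0)})$ together with $Te^{T\beta_+}+1\ge1$ to pull out the common factor $(Te^{T\beta_+}+1)$, I expect to reach
\begin{align*}
C_{\bar{b}}\,B[\phi^m,\phi^{m-1},C_Z^{\phi^{m-1}}]\le C\big(1+T+TC_{\bar{b}}C_{(\phi^0)}\big)\,e^{T\beta_+}\big(Te^{T\beta_+}+1\big)\,B_{(\phi^0)},
\end{align*}
with $B_{(\phi^0)}$ as in \eqref{const:B}, which is exactly the left-hand side of \eqref{eq:constraction_condition}.

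Finally I would set $c:=1-\tfrac{\tau}{2}(\tfrac{\mu L_{fa}}{\mu+L_{fa}}+\nu)+\tau C_{\bar{b}}B[\phi^m,\phi^{m-1},C_Z^{\phi^{m-1}}]$ and check $c\in[0,1)$. The assumption \eqref{eq:constraction_condition} makes the uniform upper bound above strictly smaller than $\tfrac12(\tfrac{\mu L_{fa}}{\mu+L_{fa}}+\nu)$, hence $c<1$; and the stepsize range $\tau\in(0,\tfrac{2}{\mu+L_{fa}}\wedge\tfrac1\nu]$ gives $\tau\tfrac{\mu L_{fa}}{\mu+L_{fa}}\le\tfrac12$ (from $(\mu+L_{fa})^2\ge4\mu L_{fa}$) and $\tau\nu\le1$, whence $1-\tfrac{\tau}{2}(\tfrac{\mu L_{fa}}{\mu+L_{fa}}+\nu)\ge\tfrac14$ and so $c\ge0$. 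As this bound on $c$ is independent of $m$, the contraction \eqref{contraction} follows for every $m\in\sN$.
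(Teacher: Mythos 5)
Your proposal is correct and follows essentially the same route as the paper's proof: the one-step recursion from Lemma \ref{Lemma_iteration_1} with $x=x'$, the adjoint stability bound of Proposition \ref{prop:Y_stability2} (with the $Z$-bound supplied by Lemma \ref{lem:MD}), the uniform-in-$m$ estimates $[\phi^m]_1\le L_{(\phi^0)}$ and $\sup_{m,t}|\phi^m_t(0)|\le C_{(\phi^0)}$ from Theorems \ref{thm:lipschitz_iterates} and \ref{thm:phi_bound}, and the same verification that $c\in[0,1)$ under \eqref{eq:constraction_condition} and the stepsize restriction. The only cosmetic difference is that the paper defines $c$ once via the $m$-independent upper bound on $C_{\bar b}B[\phi^m,\phi^{m-1},C_Z^{\phi^{m-1}}]$ rather than per iteration, which is exactly what your closing sentence amounts to.
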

\begin{Remark}\label{remark:items}
Theorem \ref{thm:iterate_contration}
shows that 
if 
\eqref{eq:condition_lipschitz} 
and \eqref{eq:constraction_condition}
hold, then 
the iterates $(\phi^m)_{m\in \sN_0}$
form a Cauchy sequence,
whose limit will be characterized in Theorem \ref{thm:convergence_stationary}.
We now observe that 
the inequalities
\eqref{eq:condition_lipschitz} 
and \eqref{eq:constraction_condition}
can be ensured if one of the conditions 
\ref{item:T_small}-\ref{item:small_k} 
holds. 
To this end, we  focus on 
\eqref{eq:constraction_condition},
as \eqref{eq:condition_lipschitz} 
can be analyzed similarly. 
Suppose  all remaining parameters are  fixed.
Then one can clearly see that 
\eqref{eq:constraction_condition}
holds if
(a)
${ \frac{\mu L_{fa}}{\mu + L_{fa}}
+\nu}$ is sufficiently large or 
(b) ${B}_{(\phi^0)}$
is sufficiently small.
The former case holds 
if either $\mu$ or $\nu$ is sufficiently large
(note that 
\eqref{eq:f_a_lipschitz}
and \eqref{eq:strong_convex_f}
imply that  $\mu\le L_{fa}$).
{
The latter case holds for (a) small $C_{\bar{b}}$, 
or (b) small 
${\tfrac{e^{ T\alpha }-1}{ \alpha}}$
and 
$\mathfrak{m}_{(\alpha, \beta)}$,
or (c) small
$L_g, C_Y,L_{fx}$ and $L_Y(L_{(\phi^0)})$.
By the definitions of $\alpha$ and $\beta$,
${\tfrac{e^{ T\alpha }-1}{ \alpha}}$ and $\mathfrak{m}_{(\alpha, \beta)}$
tend to 0,
as $T\to 0$ or $\kappa_{\hat{b}}\to -\infty$ or $\rho \to \infty$ 
(see Lemma \ref{lemma:m_ab} in Appendix \ref{appendix:technical}),
while 
by 
\eqref{eq:Y_uniform_bdd}
and 
\eqref{eq:Y_Lipschitz_estimate_statement},
 $C_Y$ and $L_Y(L_{(\phi^0)})$ scale linearly in $C_g, L_g, C_{fx},L_{fx}$,
 and hence  ${B}_{(\phi^0)}$ is close to zero if 
$C_g, L_g, C_{fx},L_{fx}$ are sufficiently small.}

\end{Remark}

\begin{proof}
For any $(t,x) \in [0,T] \times \mathbb{R}^n$, Lemma \ref{Lemma_iteration_1} with $x=x'$, $y=Y_t^{t,x,\phi^m}$, $y' =Y_t^{t,x,\phi^{m-1}}$, $a=\phi_t^m(x) $ and $a' = \phi_t^{m-1}(x)$ immediately yields that
for all
 { $\tau \in (0, \frac{2}{\mu + L_{fa}}\wedge \frac{1}{\nu}]$},
\begin{align}
\label{eq:phi_m+1-phi_m_contraction}
 |\phi_t^{m+1}(x)-\phi^{m}_t(x)|    \le 
\left(
1 - { \tau\frac{1}{2}
\left(\frac{\mu L_{fa}}{\mu + L_{fa}}
+\nu
\right)}
\right)
|\phi_t^{m}(x)-\phi^{m-1}_t(x)|
+\tau C_{\bar{b}}|Y_t^{t,x,\phi^m}-Y_t^{t,x,\phi^{m-1}}|.
\end{align}
Applying Proposition \ref{prop:Y_stability2} further gives
\begin{align*}
 \frac{
 |\phi_t^{m+1}(x)-\phi^{m}_t(x)|}
 {1+|x|}
 &\le 
\left(
1 - { \tau\frac{1}{2}
\left(\frac{\mu L_{fa}}{\mu + L_{fa}}
+\nu
\right)}
\right)
\frac{|\phi_t^{m}(x)-\phi^{m-1}_t(x)|}{1+|x|}
\\
&\q 
+\tau C_{\bar{b}}B[\phi^m,\phi^{m-1},C_Z^{\phi^{m-1}}]
|\phi^m-\phi^{m-1}|_0
\end{align*}
Hence,  taking supremum over $(t,x)\in [0,T]\t \sR^n$ results in 
\begin{align*}
     |\phi^{m+1}-\phi^{m}|_0
    \leq \left( 1 + \tau \left( C_{\bar{b}} B[\phi^m,\phi^{m-1},C_Z^{\phi^{m-1}}] -{ \frac{1}{2}
\left(\frac{\mu L_{fa}}{\mu + L_{fa}}
+\nu
\right)}  \right) \right) |\phi^m-\phi^{m-1}|_0,
\end{align*}
with 
the constant 
$B[\phi^m,\phi^{m-1},C_Z^{\phi^{m-1}}]$
 defined in \eqref{eq:Y_difference_phi_constant}.
Observe that 
under  \eqref{eq:condition_lipschitz}, 
Theorem \ref{thm:lipschitz_iterates} shows that 
for all { $\tau \in (0, \frac{2}{\mu + L_{fa}}\wedge \frac{1}{\nu}]$}
  and  $m\in \sN_0$,
$[\phi^m]_1\le L_{(\phi^0)}$,
which along with 
Proposition \ref{prop:Y_stability2} implies that 
$C_Z^{\phi^{m}}\le C_{\sigma} L_Y(L_{(\phi^0)})$.
Hence, by Theorem \ref{thm:phi_bound} and  \eqref{eq:Y_difference_phi_constant}, 
\begin{align*}
C_{\bar{b}} B[\phi^m,\phi^{m-1},C_Z^{\phi^{m-1}}]
& \le 
C C_{\bar{b}}^2 \Big(1+ {T}
+ T  C_{\bar{b}} \sup_{t\in [0,T]}|\phi^{m-1}_t(0)|
\Big)     e^{ T
\beta_{+}}
\bigg( L_g {
\mathfrak{m}^{1/2}_{(\alpha, \beta)}}  
\\
&\q + \tfrac{e^{T \a } -1}{{\a}}
\Big(
( C_Y
+L_{fx})(1+[\phi^{m}]_1)
+  C_Z^{\phi^{m-1}} 
\Big)
(T
e^{ T
 \beta_{+}}+1) 
 \bigg)
 \\
 &\le 
  C 
    (1+ {T}
+ T C_{\bar{b}} C_{(\phi^0)}
)     e^{T
\beta_{+}} 
(T
e^{ T
 \beta_{+}}+1) 
B_{(\phi^0)},
 \end{align*}
  with $\beta$, $\mathfrak{m}_{(\alpha, \beta)}$
  and $B_{(\phi^0)}$
  defined as in \eqref{const:B}.
 Then 
 under \eqref{eq:constraction_condition},
 the desired estimate holds with
 \begin{equation*}
 c=1 + \tau \left( C 
    (1+ {T}
+ T C_{\bar{b}} C_{(\phi^0)}
)     e^{ T
\beta_{+}} 
(T
e^{ T
 \beta_{+}}+1) 
B_{(\phi^0)} -{ \frac{1}{2}
\left(\frac{\mu L_{fa}}{\mu + L_{fa}}
+\nu
\right)}\right)
\in [0,1).
\end{equation*}
Note that  \eqref{eq:constraction_condition}
implies $c<1$, and
$\tau \le \frac{2}{\mu + L_{fa}}\wedge \frac{1}{\nu}$
implies that 
$c\ge 1 -  { \frac{ \tau}{2}
\left(\frac{\mu L_{fa}}{\mu + L_{fa}}
+\nu
\right)} \ge 0$.
 \end{proof}

\subsection{Linear convergence   to stationary points}\label{sec:conv_stationary}
Based on Theorem \ref{thm:iterate_contration}, we 
prove the linear convergence of the iterates 
$(\phi^m)_{m\in \sN_0}$ in the weighted sup-norm $|\cdot|_0$ (see Definition \ref{def:fb})
and the  associated control processes $(\a^{\phi^m})_{m\in \sN_0}$ to stationary points of $J(\cdot;\xi_0)$.

The following proposition  characterizes stationary points of the summation of a nonconvex differentiable function and a convex nonsmooth function.
\begin{Proposition}\label{prop:stationary}
Let $X$ be a Hilbert space
 equipped with the norm $\|\cdot\|_X$,  
 $F:X\to \sR$ be a Fr\'{e}chet differentiable function, 
 $G:X\to \sR\cup \{\infty\}$
 be a proper, lower semicontinuous, convex function, and $x^\star\in \operatorname{dom} G$. Then $x^\star$ is a stationary point of $F+G$
 if and only if  
 {
 for some  $\tau>0$,}
 $$
 x^\star=\prox_{\tau G}(x^\star-\tau \nabla F(x^\star)),
 $$
 where for all $x\in X$,
 $\prox_{\tau G}(x)={
 \argmin}_{z\in X}\big(
 \tfrac{1}{2}\|z-x\|^2_X+\tau G(z)\big)$.
\end{Proposition}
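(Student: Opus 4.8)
The plan is to reduce both directions of the claimed equivalence to the single inclusion $-\nabla F(x^\star)\in \p^s G(x^\star)$, where $\p^s G(x^\star)\coloneqq\{z\in X\mid G(x)-G(x^\star)\ge \la z,x-x^\star\ra_X,\ \fa x\in X\}$ denotes the convex subdifferential of $G$. The two ingredients are that, for the nonsmooth convex part $G$, the Fréchet subdifferential from Definition \ref{def:stationary} coincides with $\p^s G$, and that the proximal map admits a subdifferential characterization. Once both sides of the desired equivalence are rewritten in terms of this one inclusion, the equivalence itself, as well as the irrelevance of the particular $\tau>0$, follows immediately.

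First I would treat the left-hand side (stationarity). Since $F$ is Fréchet differentiable at $x^\star$, substituting $F(x)=F(x^\star)+\la \nabla F(x^\star),x-x^\star\ra_X+o(\|x-x^\star\|_X)$ into the defining $\liminf$ of $\p(F+G)(x^\star)$ shows that $\bar x\in \p(F+G)(x^\star)$ holds if and only if $\bar x-\nabla F(x^\star)$ lies in the Fréchet subdifferential of $G$ at $x^\star$; in other words, the smooth term merely translates the subdifferential by $\nabla F(x^\star)$. It then remains to verify that, because $G$ is proper, lower semicontinuous and convex, its Fréchet subdifferential equals $\p^s G(x^\star)$: the inclusion $\p^s G(x^\star)\subset \p G(x^\star)$ is immediate from the global subgradient inequality, while the reverse is obtained by evaluating the $\liminf$ condition along the segments $x^\star+t(x-x^\star)$ as $t\to 0^+$ and using the convexity bound $G(x^\star+t(x-x^\star))\le (1-t)G(x^\star)+tG(x)$ to upgrade the resulting local estimate to the global inequality. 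Taking $\bar x=0$ shows that $x^\star$ is a stationary point of $F+G$ precisely when $-\nabla F(x^\star)\in \p^s G(x^\star)$.

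Next I would treat the right-hand side (the fixed-point identity). For every $\tau>0$ the map $\prox_{\tau G}$ is single-valued and well defined, since $z\mapsto \tfrac12\|z-v\|_X^2+\tau G(z)$ is proper, lower semicontinuous and strongly convex, hence admits a unique minimiser. Writing out Fermat's rule for this strongly convex problem, $p=\prox_{\tau G}(v)$ if and only if $\tfrac{1}{\tau}(v-p)\in \p^s G(p)$. Specialising to $v=x^\star-\tau\nabla F(x^\star)$ and $p=x^\star$, the identity $x^\star=\prox_{\tau G}(x^\star-\tau\nabla F(x^\star))$ is equivalent to $-\nabla F(x^\star)\in \p^s G(x^\star)$, which is exactly the condition derived above. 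As this inclusion does not involve $\tau$, its validity for some $\tau>0$ is equivalent to its validity for all $\tau>0$, which completes the argument.

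I expect the main obstacle to be the careful justification, in the infinite-dimensional Hilbert-space setting, that the Fréchet subdifferential of the convex function $G$ coincides with $\p^s G$ and that the smooth perturbation $F$ acts additively on the subdifferential; these are precisely the steps where one must argue directly from the $\liminf$ definition rather than appeal to finite-dimensional calculus rules, although both are standard consequences of convexity and Fréchet differentiability and may alternatively be cited from \cite{mordukhovich2006variational}.
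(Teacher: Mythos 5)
Your proof is correct and follows essentially the same route as the paper's: both directions are reduced to the single inclusion $-\nabla F(x^\star)\in \p^s G(x^\star)$ via the smooth-perturbation (sum) rule for the Fr\'{e}chet subdifferential and the Fermat-rule characterization of $\prox_{\tau G}$. The only difference is that you verify the two auxiliary facts (the additive action of $\nabla F(x^\star)$ on the subdifferential, and the coincidence of the Fr\'{e}chet and convex subdifferentials of $G$) directly from the $\liminf$ definition, whereas the paper cites them from \cite{mordukhovich2006variational}; your direct arguments are sound, so this is a matter of self-containedness rather than a different approach.
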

 \begin{proof}
 By
 \cite[Proposition 1.107]{mordukhovich2006variational}, the Fr\'{e}chet differentiability of $F$ implies that
 $\p (F+G)(x^\star)=\nabla F(x^\star)+\p G(x^\star)$.
 Hence $x^\star$ is a stationary point of $F+G$
 if and only if $-\nabla F(x^\star)\in \p G(x^\star)$.
 By the properties of $G$,
 $\p G$ agrees with  the  convex subdifferential of $G$ (see \cite[Equation 1.51 and Theorem 1.93]{mordukhovich2006variational}), which along with
 the definition of $\prox$ shows  that for all $x,u\in X$ and $\tau>0$,
 $u=\prox_{\tau G}(x)$ if and only if $x-u\in \p (\tau G)(u)$.
 Hence by $-\nabla F(x^\star)\in \p G(x^\star)$, for all $\tau>0$,
 $\big(x^\star-\tau \nabla F(x^\star)\big)-x^\star\in \p (\tau G)(x^\star)$,
 which leads to the desired result. \end{proof}

The following theorem presents a precise statement of  Theorem \ref{thm:convergence_stationary_formal}, which establishes the linear convergence 
of the iterates 
$(\phi^m)_{m\in \sN_0}$,
and 
  characterizes the  limit
of   the associated control processes
$(\a^{\phi^{m}})_{m\in \sN_0}$
based on Proposition \ref{prop:stationary}.

\begin{Theorem}
\label{thm:convergence_stationary}
Assume the same notation as in Theorem \ref{thm:iterate_contration}.
For each $\phi\in \cV_\bA$,
let $\a^\phi\in \cH^2(\sR^k)$ be the associated control process.
Then for all 
 $\phi^0 \in \cV_\bA$ 
 satisfying
 \eqref{eq:condition_lipschitz} 
 and 
\eqref{eq:constraction_condition},
and for all   { $\tau \in (0, \frac{2}{\mu + L_{fa}}\wedge \frac{1}{\nu}]$},
there exists  $c \in [0,1)$, $\widetilde{C}\ge 0$ and $\phi^\star\in \cV_\bA$ such that 
\begin{enumerate}[(1)]
    \item \l{item:q_linear_phi}
    for all $m\in \sN_0$,
    $  |\phi^{m+1}-\phi^{\star}|_0\le  
c|\phi^{m}-\phi^\star|_0$,
    \item \l{item:r_linear_alpha}
     for all $m\in \sN_0$,
    $\|\a^{\phi^{m}}-\a^{\phi^\star}\|_{\cH^2}
\le \widetilde{C}
c^m |\phi^{0}-\phi^\star|_0$,

    \item
    \l{item:stationary_alpha_star}
$\alpha^{\phi^\star}$ 
is a stationary point of
$J(\cdot;\xi_0):\cH^2(\sR^k)\to \sR\cup\{\infty\}$
defined as in \eqref{eq:control_value}.
\end{enumerate}

\end{Theorem}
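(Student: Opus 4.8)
The plan is to obtain all three conclusions from the contraction of Theorem~\ref{thm:iterate_contration} together with the stability estimates already established. For the linear convergence of $(\phi^m)_{m\in\sN_0}$ in $|\cdot|_0$, the per-step bound $|\phi^{m+1}-\phi^m|_0\le c|\phi^m-\phi^{m-1}|_0$ of \eqref{contraction} shows by telescoping that $(\phi^m)_{m\in\sN_0}$ is Cauchy in the weighted sup-norm. Since the space of measurable maps $\phi:[0,T]\t\sR^n\to\sR^k$ with $|\phi|_0<\infty$ is a Banach space, there is a limit $\phi^\star$ with $|\phi^m-\phi^\star|_0\to0$. I would pass the uniform bounds of Theorems~\ref{thm:phi_bound} and~\ref{thm:lipschitz_iterates} to this limit: pointwise convergence gives $[\phi^\star]_1\le L_{(\phi^0)}$ and $\sup_{t}|\phi^\star_t(0)|\le C_{(\phi^0)}$, hence $|\phi^\star|_0<\infty$. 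To secure $\phi^\star\in\cV_\bA$ (in particular $\phi^\star_t(x)\in\bA$ even when $\bA$ is not closed) I would let $m\to\infty$ in \eqref{eq:phi_update}: Proposition~\ref{prop:Y_stability2} makes $\phi\mapsto Y^{t,x,\phi}_t$ continuous in $|\cdot|_0$ on $\{[\phi]_1\le L_{(\phi^0)}\}$ and $\prox_{\tau\ell}$ is nonexpansive, so $\phi^\star$ is a fixed point of the iteration map, and $\operatorname{range}\prox_{\tau\ell}\subseteq\operatorname{dom}\ell=\bA$ then forces $\phi^\star_t(x)\in\bA$. The stated rate follows by repeating the estimate of Theorem~\ref{thm:iterate_contration} (Lemma~\ref{Lemma_iteration_1} combined with Proposition~\ref{prop:Y_stability2}) with $\phi^{m-1}$ replaced by $\phi^\star$; both $\phi^m$ and $\phi^\star$ obey the bounds $C_{(\phi^0)},L_{(\phi^0)}$, so the same $c$ applies and iterating gives $|\phi^m-\phi^\star|_0\le c^m|\phi^0-\phi^\star|_0$.

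For the $\cH^2$-convergence of the control processes I would split $\a^{\phi^m}_t-\a^{\phi^\star}_t=\big(\phi^m_t(X^{\xi_0,\phi^m}_t)-\phi^\star_t(X^{\xi_0,\phi^m}_t)\big)+\big(\phi^\star_t(X^{\xi_0,\phi^m}_t)-\phi^\star_t(X^{\xi_0,\phi^\star}_t)\big)$, bounding the first term by $|\phi^m-\phi^\star|_0(1+|X^{\xi_0,\phi^m}_t|)$ and the second by $[\phi^\star]_1\,|X^{\xi_0,\phi^m}_t-X^{\xi_0,\phi^\star}_t|$. The moment bound $\sup_m\|X^{\xi_0,\phi^m}\|_{\cS^2}<\infty$, which follows from Lemma~\ref{forward:apriori_p} and the uniform bounds on $[\phi^m]_1$ and $\phi^m_\cdot(0)$, handles the first term, while Lemma~\ref{forward:apriori_p} applied with drifts $b_t(\cdot,\phi^m_t(\cdot))$ and $b_t(\cdot,\phi^\star_t(\cdot))$ and common diffusion gives $\|X^{\xi_0,\phi^m}-X^{\xi_0,\phi^\star}\|_{\cS^2}\le C|\phi^m-\phi^\star|_0$, using $b_t(x,\phi^m_t(x))-b_t(x,\phi^\star_t(x))=\bar b_t(x)(\phi^m_t(x)-\phi^\star_t(x))$ and $\sup_m[\phi^m]_1\le L_{(\phi^0)}$ to keep the dissipativity constant uniform in $m$. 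Combining yields $\|\a^{\phi^m}-\a^{\phi^\star}\|_{\cH^2}\le\widetilde C|\phi^m-\phi^\star|_0\le\widetilde C\,c^m|\phi^0-\phi^\star|_0$.

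For the stationarity of $\a^{\phi^\star}$ I would invoke Proposition~\ref{prop:stationary} on $X=\cH^2(\sR^k)$ with $F=J_{\rm diff}(\cdot;\xi_0)$ and $G(\a)=\sE[\int_0^T e^{-\rho t}\ell(\a_t)\,\d t]$, so that $J(\cdot;\xi_0)=F+G$. Here $G$ is proper, lower semicontinuous and convex, and its proximal map decouples pointwise as $(\prox_{\tau G}(\a))_t=\prox_{\tau e^{-\rho t}\ell}(\a_t)$, while $F$ is Fr\'echet differentiable with $(\nabla F(\a))_t=e^{-\rho t}\p_a H^{\textrm{re}}_t(X^{\xi_0,\a}_t,\a_t,Y^{\xi_0,\a}_t)$ as recalled in Section~\ref{sec:intro}. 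Evaluating the fixed-point identity for $\phi^\star$ at $x=X^{\xi_0,\phi^\star}_t$ and using the Markovian flow relation $Y^{\xi_0,\phi^\star}_t=Y^{t,x,\phi^\star}_t|_{x=X^{\xi_0,\phi^\star}_t}$ gives $\a^{\phi^\star}_t=\prox_{\tau\ell}\big(\a^{\phi^\star}_t-\tau\,\p_a H^{\textrm{re}}_t(X^{\xi_0,\phi^\star}_t,\a^{\phi^\star}_t,Y^{\xi_0,\phi^\star}_t)\big)$ for $\d t\otimes\d\sP$-a.e. By the prox--subdifferential correspondence this is equivalent to $-e^{\rho t}(\nabla F(\a^{\phi^\star}))_t\in\p^s\ell(\a^{\phi^\star}_t)$ a.e., i.e. $-\tau(\nabla F(\a^{\phi^\star}))_t\in\p^s(\tau e^{-\rho t}\ell)(\a^{\phi^\star}_t)$, which reassembles into $\a^{\phi^\star}=\prox_{\tau G}(\a^{\phi^\star}-\tau\nabla F(\a^{\phi^\star}))$; Proposition~\ref{prop:stationary} then yields $0\in\p J(\a^{\phi^\star};\xi_0)$.

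The main obstacle is the rigorous execution of this last step: establishing that $J_{\rm diff}$ is Fr\'echet differentiable on $\cH^2(\sR^k)$ with the claimed adjoint representation of its gradient, and that the convex subdifferential of the integral functional $G$ is exactly the pointwise inclusion $\zeta_t\in e^{-\rho t}\p^s\ell(\a_t)$ a.e. These two facts are what transfer the pointwise feedback fixed-point equation, which lives in $(t,x)$-space, into a genuine stationarity statement in the infinite-dimensional space $\cH^2(\sR^k)$, with the Markovian flow identity linking $Y^{\xi_0,\phi^\star}$ and $Y^{t,x,\phi^\star}$ serving as the bridge. The measurable-selection argument underlying the subdifferential of $G$, together with the care needed to identify $\phi^\star$ as a fixed point (and hence $\bA$-valued) when $\bA$ fails to be closed, are the points that will require the most attention.
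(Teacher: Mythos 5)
Your proposal is correct and follows essentially the same route as the paper's own proof: contraction plus completeness of $(\cB([0,T]\t\sR^n;\sR^k),|\cdot|_0)$ to get $\phi^\star$, passing to the limit in the prox update (via Proposition \ref{prop:Y_stability2}) to obtain $\bA$-valuedness and the fixed-point identity, re-running the contraction estimate against $\phi^\star$ for Item (1), the same two-term splitting with Lemma \ref{forward:apriori_p} for Item (2), and the decomposition $J=F+G$ with Proposition \ref{prop:stationary} and the prox rescaling $\prox_{\tau\ell}\leftrightarrow\prox_{\tau e^{-\rho t}\ell}$ for Item (3). The two obstacles you flag are resolved in the paper exactly as you anticipate: the Fr\'echet differentiability of $F$ with the adjoint representation of $\nabla F$ is obtained by citation (using a discounted Hamiltonian $\tilde H$ and $\tilde Y_s=e^{-\rho s}Y_s$, equivalent to your $e^{-\rho t}\p_a H^{\textrm{re}}$ formula), and the $\bA$-valuedness issue is handled by first taking the pointwise limit $\cY_t(x)=\lim_m Y^{t,x,\phi^m}_t$ before identifying it with $Y^{t,x,\phi^\star}_t$, so that $\phi^\star\in\cV_\bA$ is established prior to invoking well-posedness of the FBSDE at $\phi^\star$.
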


\begin{proof}
Throughout the proof,
let 
 $\phi^0 \in \cV_\bA$ satisfy \eqref{eq:condition_lipschitz} 
 and 
\eqref{eq:constraction_condition},
and   { $\tau \in (0, \frac{2}{\mu + L_{fa}}\wedge \frac{1}{\nu}]$}.
In the present  setting,
Theorem \ref{thm:lipschitz_iterates} implies  that
$\sup_{m\in \sN_0}[\phi^m]_1\le L_{(\phi^0)}$,
and 
 Theorem \ref{thm:iterate_contration}
 shows that 
$(\phi^m)_{m\in \sN_0}$
is a Cauchy sequence 
in  $(\cB([0,T]\t \sR^n;\sR^k),|\cdot|_0)$. 
As 
$(\cB([0,T]\t \sR^n;\sR^k),|\cdot|_0)$ is a Banach space,
 the Banach fixed point theorem shows that 
there exists  $\phi^\star\in \cB([0,T]\t \sR^n;\sR^k)$
such that 
$\lim_{m\to \infty} |\phi^m-\phi^\star|_0=0$.
The  convergence of $(\phi^m)_{m\in \sN_0}$
in the $|\cdot|_0$-norm
and $\sup_{m\in \sN_0}[\phi^m]_1\le L_{(\phi^0)}$
 imply that 
$[\phi^\star]_1\le L_{(\phi^0)}$.
Hence, to show $\phi^\star\in \cV_\bA$, it remains to prove 
$\phi^\star$ takes values in $\bA$ a.e.

By
Proposition \ref{prop:Y_stability2}
and 
$\sup_{m\in \sN_0,t\in [0,T]}
\big(
|\phi_t^m(0)|+
[\phi^m]_1\big)
<\infty$, there exists  $C\ge 0$ such that for all $(t,x)\in [0,T]\t \sR^n$ and $m,m'\in \sN_0$,
$|Y^{t,x,\phi^m}_t-
Y^{t,x,\phi^{m'}}_t| 
\le C(1+|x|) |\phi^m-\phi^{m'}|_0$.
This  along with the fact that $(\phi^{m})_{m\in \sN}$ is a Cauchy sequence in
 $(\cB([0,T]\t \sR^n;\sR^k),|\cdot|_0)$
shows that 
for all $(t,x)\in [0,T]\t \sR^n$,
$(Y^{t,x,\phi^m}_t)_{m\in \sN_0}$
is a Cauchy sequence in $\sR^n$.
Hence there exists a function 
$\cY:[0,T]\t \sR^n\to \sR^n$
such that 
$\lim_{m\to \infty} Y^{t,x,\phi^m}_t=\cY_t(x)$
for all $(t,x)\in [0,T]\t \sR^n$.
Then for any $(t,x)\in [0,T]\t \sR^n$,
by the continuity of
$\prox_{\tau \ell} $
and  ${
\partial_{a}} H_t^{\textrm{re}}$
and the pointwise convergence of 
$(\phi^m)_{m\in \sN_0}$
and $(Y^{t,x,\phi^m}_t)_{m\in \sN_0}$,
one can pass $m$ to infinity in 
\eqref{eq:phi_update}
and show
 for a.e.~$(t,x)\in [0,T]\t \sR^n$, 
\begin{align}\label{limit2_cY}
 \phi^\star_t(x) = \lim_{m \to \infty}  \phi_t^{m+1}(x)&= \lim_{m \to \infty} \prox_{\tau\ell} \big(\phi_t^m(x)-\tau {
   \partial_{a}} H_t^{\textrm{re}}(x,\phi_t^m(x),Y^{t,x,\phi^m}_t) \big) \notag \\
   & = \prox_{\tau\ell} \big(\phi^\star_t(x)-\tau {
   \partial_{a}} H_t^{\textrm{re}}(x,\phi^\star_t(x),\cY_t(x)) \big).
\end{align}
As $\prox_{\tau\ell}(z)\in \operatorname{dom} \ell=\bA$ for all $z\in \sR^k$, $\phi^\star_t(x)\in \bA$
for a.e.~$(t,x)\in [0,T]\t \sR^n$,
and hence  $\phi^\star\in \cV_\bA$.
Furthermore, 
by 
$\phi^\star\in \cV_{\bA}$, 
$\lim_{m\to \infty} |\phi^m-\phi^\star|_0=0$
and Proposition \ref{prop:Y_stability2},
$\lim_{m\to \infty} Y^{t,x,\phi^m}_t
=Y^{t,x,\phi^\star}_t
=\cY_t(x)$
for all $(t,x)\in [0,T]\t \sR^n$,
which along with 
\eqref{limit2_cY}  shows that
\begin{align}\label{limit2}
 \phi^\star_t(x) &= \prox_{\tau\ell} \big(\phi^\star_t(x)-\tau {
 \partial_{a}} H_t^{\textrm{re}}(x,\phi^\star_t(x),Y^{t,x,\phi^\star}_t) \big) 
\end{align}
We are now ready to establish the desired statements. 
To prove Item \ref{item:q_linear_phi},
for any $(t,x) \in [0,T] \times \mathbb{R}^n$, Lemma \ref{Lemma_iteration_1} with $x'=x$, $y=Y_t^{t,x,\phi^m}$, $y' =Y_t^{t,x,\phi^{\star}}$, $a=\phi_t^m(x) $ and $a' = \phi_t^{\star}(x)$ and \eqref{limit2} immediately yield that
for all
 { $\tau \in (0, \frac{2}{\mu + L_{fa}}\wedge \frac{1}{\nu}]$},
\begin{align*}
 |\phi_t^{m+1}(x)-\phi^{\star}(x)|    \le 
\left(
1 - { \tau\frac{1}{2}
\left(\frac{\mu L_{fa}}{\mu + L_{fa}}
+\nu
\right)}
\right)
|\phi_t^{m}(x)-\phi^{\star}_t(x)|
+\tau C_{\bar{b}}|Y_t^{t,x,\phi^m}-Y_t^{t,x,\phi^{\star}}|.
\end{align*}
Now, following the exact same lines as the proof of Theorem \ref{thm:iterate_contration} 
(cf.~\eqref{eq:phi_m+1-phi_m_contraction})
and using the above  facts that $\phi^\star \in \cV_\bA$, $\sup_{t\in [0,T]}|\phi_t^\star(0)|
\le C_{(\phi^0)}$ and 
$[\phi^\star]_1\le L_{(\phi^0)}$, we deduce $|\phi^{m+1}-\phi^{\star}|_0
\le  c|\phi^{m}-\phi^\star|_0$ with the same constant $c \in [0,1)$ as in Theorem \ref{thm:iterate_contration}.

 To
 prove Item
 \ref{item:r_linear_alpha},
 observe that for each $m\in \sN_0$, $\a^{\phi^m}=\phi^m(X^{\xi_0,\phi^m})$
and $\a^{\phi^\star}=\phi^\star(X^{\xi_0,\phi^\star})$, which implies that
\begin{align}
\label{eq:open_loop_conv2}
    \begin{split}
    \|\a^{\phi^{m+1}}-\a^{\phi^\star}\|_{\cH^2}
    &\le 
    \|\phi^{m+1}(X^{\xi_0,\phi^{m+1}})-\phi^{m+1}(X^{\xi_0,\phi^\star})\|_{\cH^2}
    +
    \|\phi^{m+1}(X^{\xi_0,\phi^\star})-\phi^\star(X^{\xi_0,\phi^\star})\|_{\cH^2}
    \\
    &\le 
   [\phi^{m+1}]_1 \|X^{\xi_0,\phi^{m+1}}-X^{\xi_0,\phi^\star}\|_{\cH^2}
    +
    |\phi^{m+1}- \phi^\star|_0(1+\|X^{\xi_0,\phi^\star}\|_{\cH^2}).
    \end{split}
\end{align}
By using $\phi^\star\in \cV_\bA$ and  $\sup_{m\in \sN}(|\phi^m_t(0)|
+
[\phi^m]_1)<\infty$
and  Lemma \ref{forward:apriori_p},
one can easily show that 
there exists $C\ge 0$ such that for all $m\in \sN_0$,
$\|X^{\xi_0,\phi^\star}\|_{\cH^2}
\le C$ and 
$\|X^{\xi_0,\phi^m}-X^{\xi_0,\phi^\star}\|_{\cH^2}\le C |\phi^m-\phi^\star|_0 $,
which along with 
\eqref{eq:open_loop_conv2} 
leads to the desired estimate 
 $\|\a^{\phi^{m+1}}-\a^{\phi^\star}\|_{\cH^2}\le \widetilde{C} c^m|\phi^0-\phi^\star|_0$
 for all $m\in \sN_0$,
 with  some constant 
$\widetilde{C}\ge 0$ independent of $m$.

{
It remains to prove 
Item \ref{item:stationary_alpha_star}. 
{
Let 
 $\tilde{H}^{\textrm{re}}:[0,T] \t \sR^n \t \sR^k \t \sR^n \to \sR$ 
 and  $\tilde{H}:[0,T] \t \sR^n \t \sR^k \t \sR^n\t \sR^{n\t d} \to \sR$
  be such that
  for all $(t,x,a,y,z)\in [0,T]\t \sR^n\t \sR^k\t \sR^n\t \sR^{n\t d}$,
$ 
\tilde{H}_t^{\textrm{re}}(x,a,y)\coloneqq \left \langle b_t(x,a),y \right \rangle + e^{-\rho t} f_t(x,a)
$ and 
$
 \tilde{H}_t(x,a,y,z)\coloneqq \tilde{H}_t^{\textrm{re}}(x,a,y) + \left \langle \sigma_t(x), z \right \rangle
 $.
For each $(t,x) \in [0,T]\t \sR^n$, let 
  $X^{t,x,\phi^\star}\in \cS^2(t,T;\sR^n)$ satisfy \eqref{sde_feedback} with $
  \phi=\phi^\star$, and 
  $(\tilde{Y}^{t,x,\phi^\star},\tilde{Z}^{t,x,\phi^\star})\in \cS^2(t,T;\sR^n)\t \cH^2(t,T;\sR^{n\t d})$ satisfy 
   $ \tilde{Y}^{t,x,\phi^\star}_T =  e^{-\rho T}  \partial_x g(X^{t,x,\phi^\star}_T)$,
   and 
  $$
  \mathrm{d}\tilde{Y}^{t,x,\phi^\star}_s
 = - \partial_x \tilde{H}_s(X^{t,x,\phi^\star}_s, \phi^\star_s(X^{t,x,\phi^\star}_s),\tilde{Y}^{t,x,\phi^\star}_s,\tilde{Z}^{t,x,\phi^\star}_s) \, \mathrm{d}s + \tilde{Z}^{t,x,\phi^\star}_s \, \mathrm{d}W_s,
 \quad \fa s\in [t,T).
 $$
The affineness of  $H$   and $\tilde{H}$ in $y$ and $z$ implies that 
$\tilde{Y}^{t,x,\phi^\star}_s= e^{-\rho s} {Y}^{t,x,\phi^\star}_s$ for all $(t,x)\in [0,T]\t \sR^n$ and $s\in [t,T]$. 
Moreover, by (H.\ref{assum:pgm}) and \eqref{eq:proximal}, for all $a,u\in \sR^k$ and $\eta>0$, 
\begin{align*}
a=\prox_{\ell} (a- \eta u) 
&\Longleftrightarrow
0\in (a-(a- \eta u))+\p \ell(a) 
\Longleftrightarrow
0\in u +\p (\eta^{-1} \ell)(a) 
\\
&\Longleftrightarrow
0\in (a-(a-u)) +\p (\eta^{-1} \ell)(a) 
\Longleftrightarrow
a=\prox_{\eta^{-1}\ell} (a-  u). 
\end{align*}
Hence by \eqref{limit2} and the affineness of $H^{\textrm{re}}$ and $\tilde{H}^{\textrm{re}}$ in $y$, for all $(t,x)\in [0,T]\t \sR^n$, 
\begin{align}\label{limit_scale}
\begin{split}
 \phi^\star_t(x) &= \prox_{\tau\ell} \big(\phi^\star_t(x)-\tau e^{\rho t} {
  \partial_{a}} \tilde{H}_t^{\textrm{re}}(x,\phi^\star_t(x),e^{-\rho t} Y^{t,x,\phi^\star}_t) \big) 
 \\
 & =  \prox_{\tau e^{-\rho t} \ell} \big(\phi^\star_t(x)-\tau {
   \partial_{a}} \tilde{H}^{\textrm{re}}_t(x,\phi^\star_t(x),\tilde{Y}^{t,x,\phi^\star}_t) \big).
\end{split}
\end{align}
Now consider the
solution 
$(X^{\xi_0,\phi^\star},
\tilde{Y}^{\xi_0, \phi^\star},
\tilde{Z}^{\xi_0, \phi^\star})
\in 
\cS^2(\sR^n)
\t \cS^2(\sR^n)
\t \cH^2(\sR^{n\t d})
$
to the 
following FBSDE:
for all $t\in [0,T]$,
\begin{align*}
\d X^{\xi_0, \phi^\star}_t
&= b_t(X^{\xi_0, \phi^\star}_t,\phi^\star_t(X_t^{\xi_0, \phi^\star}))\, \d t +\sigma_t(X_t^{\xi_0, \phi^\star})\, \d W_t, 
&&\q
 X_0^{\xi_0, \phi^\star} = \xi_0,
\\
\mathrm{d}\tilde{Y}^{\xi_0, \phi^\star}_t
&= - \partial_x \tilde{H}_t(X^{\xi_0, \phi^\star}_t, \phi^\star_t(X^{\xi_0, \phi^\star}_t),\tilde{Y}^{\xi_0, \phi^\star}_t,\tilde{Z}^{\xi_0, \phi^\star}_t) \, \mathrm{d}t + \tilde{Z}^{\xi_0, \phi^\star}_t \, \mathrm{d}W_t,
&&
\q \tilde{Y}^{\xi_0, \phi^\star}_T = e^{-\rho T} \partial_x g(X^{\xi_0, \phi^\star}_T).
\end{align*}
The Markov property
in \cite[Theorem 5.1.3]
{zhang2017backward} implies 
$\tilde{Y}^{\xi_0, \phi^\star}_t=\tilde{Y}^{t,X^{\xi_0,  \phi^\star}_t,\phi^\star}_t$ 
$\d t\otimes \d \sP$ a.e.,
which along with  \eqref{limit_scale} gives that 
for $\d t\otimes \d \sP$ a.e.,
\begin{align}
\label{eq:open_loop_stationary2}
\begin{split}
 \phi^\star_t(X^{\xi_0,  \phi^\star}_t)
 &= \prox_{\tau e^{-\rho t} \ell} \big(\phi^\star_t(X^{\xi_0, \phi^\star}_t)-\tau {
 \partial_{a}} \tilde{H}_t^{\textrm{re}}(X^{\xi_0, \phi^\star}_t,\phi^\star_t(X^{\xi_0, \phi^\star}_t),
 \tilde{Y}^{\xi_0, \phi^\star}_t
 ) \big).
\end{split}
\end{align}
Observe that for all $\a\in \cH^2(\sR^k)$,
$J(\a;\xi_0)=F(\a)+G(\a)$,
where 
\begin{align*}
    F(\alpha) \coloneqq \sE\bigg[
\int_0^T e^{- \rho t} f_t(X^{\xi_0,\a}_t,\a_t)  \, \d t+e^{- \rho T} g(X^{\xi_0,\a}_T)
\bigg], \quad G(\alpha) \coloneqq \sE\bigg[
\int_0^T e^{-\rho t} \ell(\a_t)  \, \d t
\bigg],
\end{align*}
where $X^{\xi_0,\a}$ satisfies \eqref{eq:control_fwd}. 
Then the regularity of coefficients and  
\cite[Corollary 4.11]{carmona2016lectures}
imply that 
$F$ is Fr\'{e}chet differentiable,
and the derivative 
$\nabla F$ at $\alpha^{\phi^\star}_\cdot=\phi^\star_\cdot(X^{\xi_0, \phi^\star}_\cdot)$ is given by 
$$
\nabla F(\alpha^{\phi^\star} )_t
={
\partial_{a}} \tilde{H}_t^{\textrm{re}}(X^{\xi_0, \phi^\star}_t,\phi^\star_t(X^{\xi_0, \phi^\star}_t),
 \tilde{Y}^{\xi_0, \phi^\star}_t
 ),
 \q \textnormal{$\d t\otimes \d \sP$ a.e.}
$$
Moreover, 
by (H.\ref{assum:pgm}\ref{item:l}),
one can easily prove  that 
 $ G$ is 
 proper, lower semicontinuous and convex,
 and satisfies 
for all $\a\in \cH^2(\sR^k)$ and $\tau>0$,
$\prox_{\tau G}(\a)=\prox_{\tau e^{- \rho t}\ell} (\a) $ for $\d t\otimes \d \sP$ a.e.
Hence, Proposition \ref{prop:stationary} shows that $\alpha^{\phi^\star} \in \cH^2(\sR^k)$ is a stationary point of $J(\cdot;\xi_0)$.
}
}
\qedhere

\end{proof}

The following theorem presents a precise statement 
of Theorem \ref{thm:conv_approximate_formal},
where the feedback controls $(\phi^m)_{m\in \sN_0}$ are updated with approximate  gradients.

\begin{Theorem}\l{thm:conv_approximate}

Suppose (H.\ref{assum:pgm}) holds.
Let $C\ge 0$ be a  constant such that 
\eqref{eq:Y_uniform_bdd}, 
\eqref{eq:Y_Lipschitz_estimate_statement}
and 
\eqref{eq:Y_difference_phi_statement}
hold,
let 
 $C_Y \ge 0 $ be  defined  in \eqref{eq:Y_uniform_bdd},
 and let
   $\alpha \in \sR$
 be defined in
\eqref{eq:L_Y_phi}.
Let 
$\phi^0\in \cV_\bA$,
let  $C_{(\phi^0)}\ge 0$
be defined in
\eqref{eq:phi_bound_constant},
let $L_{(\phi^0)}\ge 0$
be defined in 
\eqref{eq:lipschitz_iterates_constant}, 
and let 
$L_Y(M)$, $M \geq 0$, be defined in \eqref{eq:L_Y_phi}.
Suppose that  \eqref{eq:condition_lipschitz} is satisfied, { $\tau \in (0, \frac{2}{\mu + L_{fa}}\wedge \frac{1}{\nu}]$},
and there exist constants $\widetilde{C}, \widetilde{L} \ge 0$
such that {
for all $\omega \in \Omega$}
$\sup_{t\in [0,T]}|\widetilde{\phi}_t^m(0,\om)|\le \widetilde{C}$
and $[\widetilde{\phi}^m_\cdot(\cdot,,\om)]_1\le \widetilde{L}$
for all 
$m\in \sN_0$,
and 
\begin{align}
\label{eq:constraction_condition2}
\mathfrak{D} 
\coloneqq
  { \frac{1}{2}
\left(\frac{\mu L_{fa}}{\mu + L_{fa}}
+\nu
\right)}- C 
    (1+ {T}
+ T C_{\bar{b}} \widetilde{C}
)     e^{  T
\beta_{+}} 
(T
e^{ T
 \beta_{+}}+1) 
\widetilde{B}
>0,
\end{align}
with the constants
$\beta\in \sR$,
$\mathfrak{m}_{(\alpha, \beta)}>0$
and 
$\widetilde{B}\ge 0$  defined by  
\begin{align}\label{const:B2}
\begin{split}
 \beta &\coloneqq 2 \kappa_{\hat{b}} 
  +2L_{\bar{b}} \max \{ L_{(\phi^0)}, \widetilde{L} \}+C,
  \quad 
  {
  \mathfrak{m}_{(\alpha, \beta)} \coloneqq \sup_{t \in [0,T]} e^{2\alpha (T-t)}  \int_{t}^{T} e^{(T-s)\b} \, \mathrm{d}s   } ,
  \\
\widetilde{B} &
\coloneqq   
 C^2_{\bar{b}} 
 \Big[
 L_g  {
 \mathfrak{m}^{1/2}_{(\alpha, \beta)}  }  
 + \tfrac{e^{T \a} -1}{{\a}}
\Big(
( C_Y
+L_{fx})(1+L_{(\phi^0)})
+  C_\sigma L_Y\big(\widetilde{L} \big)
\Big)
 \Big].
 \end{split}
\end{align}
Let 
$c=1-\tau \mathfrak{D}\in [0,1)$.
Then  
{
for a.s.~$\om\in \Omega$ and 
  for all 
$m\in\sN_0$,
\begin{align}
\label{eq:pointwise_stability}
    |\phi^{\star} - \widetilde{\phi}^{m}_\cdot(\cdot,\omega)|_0 \leq c^{m}|\phi^0-\phi^\star|_0 + \sum_{j=0}^{m-1} c^{m-1-j} \tau C_{\bar{b}}\sup_{(t,x)\in [0,T]\t \sR^n} \frac{|{\mathcal{Y}}_t^{\widetilde{\phi}^j}(x,\omega)  -\widetilde{\mathcal{Y}}_t^{\widetilde{\phi}^j}(x,\om)|}{1+|x|},
\end{align}
where 
$\phi^\star\in \cV_\bA$ 
is the limit function  in Theorem \ref{thm:convergence_stationary}.
Consequently, for all $p\ge 1$ 
 and for all 
$m\in\sN_0$,
\begin{align}
\label{eq:Lp_stability}
    \sE[|\phi^{\star} - \widetilde{\phi}^{m}|^p_0]^{\frac{1}{p}} \leq c^{m}|\phi^0-\phi^\star|_0 + \sum_{j=0}^{m-1} c^{m-1-j} \tau C_{\bar{b}}
     \sE[|\cY^{\widetilde{\phi}^j}- \widetilde{\mathcal{Y}}^{\widetilde{\phi}^j}|^p_0]^{\frac{1}{p}},
\end{align}

 }
\end{Theorem}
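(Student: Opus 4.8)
The plan is to derive a one-step \emph{perturbed} contraction for $|\widetilde{\phi}^{m+1}-\phi^\star|_0$ and then unroll it into the geometric sum \eqref{eq:pointwise_stability}. I work pathwise: for a.e.\ fixed $\omega\in\Omega$ the realised iterate $\widetilde{\phi}^m_\cdot(\cdot,\omega)$ lies in $\cV_\bA$, so that $\mathcal{Y}_t^{\widetilde{\phi}^m}(x,\omega)=Y_t^{t,x,\widetilde{\phi}^m_\cdot(\cdot,\omega)}$ is a genuine deterministic solution of the adjoint BSDE \eqref{bsde_feedback}, and every estimate from the exact analysis applies to it. Throughout I use the fixed-point identity \eqref{limit2} for $\phi^\star$ from Theorem \ref{thm:convergence_stationary}, together with the bounds $[\phi^\star]_1\le L_{(\phi^0)}$, $\sup_t|\phi^\star_t(0)|\le C_{(\phi^0)}$ and the standing hypotheses $[\widetilde{\phi}^m]_1\le\widetilde{L}$, $\sup_t|\widetilde{\phi}^m_t(0,\omega)|\le\widetilde{C}$ for all $m$ and $\omega$.

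First I would introduce the \emph{exact} one-step update from $\widetilde{\phi}^m$, namely $\widehat{\phi}_t^{m+1}(x):=\prox_{\tau\ell}\big(\widetilde{\phi}_t^m(x)-\tau\partial_a H_t^{\textrm{re}}(x,\widetilde{\phi}_t^m(x),\mathcal{Y}_t^{\widetilde{\phi}^m}(x,\omega))\big)$, and split $|\widetilde{\phi}^{m+1}-\phi^\star|_0\le|\widetilde{\phi}^{m+1}-\widehat{\phi}^{m+1}|_0+|\widehat{\phi}^{m+1}-\phi^\star|_0$. Since $\widetilde{\phi}^{m+1}$ and $\widehat{\phi}^{m+1}$ differ only through the $y$-argument of $\partial_a H^{\textrm{re}}$ at the common point $\widetilde{\phi}_t^m(x)$, the nonexpansiveness of $\prox_{\tau\ell}$ together with $\partial_a H_t^{\textrm{re}}(x,a,y)=\bar{b}_t(x)^\top y+\partial_a f_t(x,a)$ and $|\bar{b}_t(x)|\le C_{\bar{b}}$ bounds the first term by $\tau C_{\bar{b}}\sup_{t,x}\frac{|\widetilde{\mathcal{Y}}_t^{\widetilde{\phi}^m}(x,\omega)-\mathcal{Y}_t^{\widetilde{\phi}^m}(x,\omega)|}{1+|x|}$, which is exactly the approximation-error contribution appearing in \eqref{eq:pointwise_stability}.

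The second term is handled precisely as in the proof of Theorem \ref{thm:convergence_stationary}(\ref{item:q_linear_phi}): applying Lemma \ref{Lemma_iteration_1} with $x'=x$, $a=\widetilde{\phi}_t^m(x)$, $a'=\phi^\star_t(x)$, $y=Y_t^{t,x,\widetilde{\phi}^m}$, $y'=Y_t^{t,x,\phi^\star}$, and then Proposition \ref{prop:Y_stability2} with $\phi=\phi^\star$, $\phi'=\widetilde{\phi}^m$ and $C_Z^{\widetilde{\phi}^m}\le C_\sigma L_Y(\widetilde{L})$ from Lemma \ref{lem:MD}, yields $|\widehat{\phi}^{m+1}-\phi^\star|_0\le c\,|\widetilde{\phi}^m-\phi^\star|_0$ with $c=1-\tau\mathfrak{D}$. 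Here the uniform bounds on $[\widetilde{\phi}^m]_1$, $[\phi^\star]_1$ and $\sup_t|\widetilde{\phi}^m_t(0,\omega)|$ are what guarantee that $C_{\bar{b}}B[\phi^\star,\widetilde{\phi}^m,C_Z^{\widetilde{\phi}^m}]$ is majorised by the single constant built from $\beta$, $\mathfrak{m}_{(\alpha,\beta)}$ and $\widetilde{B}$ in \eqref{const:B2}, so that \eqref{eq:constraction_condition2} forces $c\in[0,1)$ \emph{independently of $m$ and $\omega$}. Combining the two bounds gives the perturbed recursion $|\widetilde{\phi}^{m+1}-\phi^\star|_0\le c\,|\widetilde{\phi}^m-\phi^\star|_0+\tau C_{\bar{b}}\sup_{t,x}\frac{|\mathcal{Y}_t^{\widetilde{\phi}^m}(x,\omega)-\widetilde{\mathcal{Y}}_t^{\widetilde{\phi}^m}(x,\omega)|}{1+|x|}$, and iterating from $\widetilde{\phi}^0=\phi^0$ produces \eqref{eq:pointwise_stability}.

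Finally, for \eqref{eq:Lp_stability} I would take $L^p(\Omega)$-norms of the pathwise bound \eqref{eq:pointwise_stability} and apply Minkowski's inequality to move the norm inside the finite sum, noting that $c^m|\phi^0-\phi^\star|_0$ is deterministic. I expect the main obstacle to be conceptual rather than computational: one must verify that the contraction rate $c$ can be taken uniform in both $m$ and $\omega$, which hinges entirely on the assumed uniform regularity of the perturbed iterates feeding into the $m$-independent majorisation of $B[\phi^\star,\widetilde{\phi}^m,C_Z^{\widetilde{\phi}^m}]$ via \eqref{const:B2}; once this uniformity is secured, the remaining steps are routine reuse of Lemmas \ref{Lemma_iteration_1} and \ref{lem:MD} and Proposition \ref{prop:Y_stability2}.
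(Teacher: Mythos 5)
Your proof is correct, and it rests on the same machinery as the paper's (Lemma \ref{Lemma_iteration_1}, Proposition \ref{prop:Y_stability2}, Lemma \ref{lem:MD}, the assumed uniform bounds on $[\widetilde{\phi}^m]_1$ and $\sup_t|\widetilde{\phi}^m_t(0,\om)|$, geometric unrolling, Minkowski for the $L^p$ bound), but the decomposition is genuinely different. The paper compares the perturbed iterates with the \emph{exact} iterates: it applies Lemma \ref{Lemma_iteration_1} with $a=\phi^m_t(x)$, $a'=\widetilde{\phi}^m_t(x)$, $y=\cY^{\phi^m}_t(x)$, $y'=\widetilde{\cY}^{\widetilde{\phi}^m}_t(x)$, splits $|\cY^{\phi^m}-\widetilde{\cY}^{\widetilde{\phi}^m}|\le|\cY^{\phi^m}-\cY^{\widetilde{\phi}^m}|+|\cY^{\widetilde{\phi}^m}-\widetilde{\cY}^{\widetilde{\phi}^m}|$, and bounds the first piece by Proposition \ref{prop:Y_stability2} applied to the pair $(\phi^m,\widetilde{\phi}^m)$, obtaining $|\phi^{m+1}-\widetilde{\phi}^{m+1}|_0\le c\,|\phi^m-\widetilde{\phi}^m|_0+\tau C_{\bar{b}}\,|\cY^{\widetilde{\phi}^m}-\widetilde{\cY}^{\widetilde{\phi}^m}|_0$; the bound \eqref{eq:pointwise_stability} then follows by the triangle inequality and Theorem \ref{thm:convergence_stationary} Item \ref{item:q_linear_phi}. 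You instead compare the perturbed iterate directly with $\phi^\star$, inserting the auxiliary exact one-step update $\widehat{\phi}^{m+1}$ from $\widetilde{\phi}^m$ and exploiting the fixed-point identity \eqref{limit2}; Proposition \ref{prop:Y_stability2} is then applied to the pair $(\phi^\star,\widetilde{\phi}^m)$, whose constants ($[\phi^\star]_1\le L_{(\phi^0)}$, $[\widetilde{\phi}^m]_1\le\widetilde{L}$, $\sup_t|\widetilde{\phi}^m_t(0,\om)|\le\widetilde{C}$, $C_Z^{\widetilde{\phi}^m}\le C_\sigma L_Y(\widetilde{L})$) are majorised by exactly the quantities entering \eqref{const:B2}, so the contraction factor of your perturbed recursion is $c=1-\tau\mathfrak{D}$ by \eqref{eq:constraction_condition2}. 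A small advantage of your route: the single rate $c$ appears directly, whereas the paper's triangle-inequality step tacitly requires the contraction constant of Theorem \ref{thm:iterate_contration} (built from $C_{(\phi^0)},L_{(\phi^0)}$ via \eqref{const:B}) to be dominated by $c$ (built from $\widetilde{C},\widetilde{L}$ via \eqref{const:B2}), which is not automatic when, say, $\widetilde{C}<C_{(\phi^0)}$ or $\widetilde{L}<L_{(\phi^0)}$; your version sidesteps this constant-matching. The price is that you still rely on Theorem \ref{thm:convergence_stationary} for the existence, regularity bound $[\phi^\star]_1\le L_{(\phi^0)}$ and fixed-point property of $\phi^\star$ (hence implicitly on its hypotheses); only its quantitative convergence rate is bypassed.
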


\begin{proof}
{
Throughout this proof, 
we fix  $\omega \in \Omega$ 
and omit the explicit dependence on $\om$ if no confusion occurs.} 
First, observe that the conditions $\sup_{t\in [0,T]}|\widetilde{\phi}_t^m(0)|\le \widetilde{C}$
and $[\widetilde{\phi}^m]_1\le \widetilde{L}$
for all $m\in \sN_0$ guarantee $\widetilde{\phi}^m \in \cV_\bA$.
We continue by quantifying $|\phi^{m+1}_t(x) - \widetilde{\phi}^{m+1}_t(x)|$ for any $(t,x) \in [0,T] \times \mathbb{R}^n$, where $\widetilde{\phi}^{m+1}_t(x)$ is defined by \eqref{eq:phi_update22}. By Lemma \ref{Lemma_iteration_1} with $x=x'$, $a=\phi^{m}_t(x)$, $a'=\widetilde{\phi}^{m}_t(x)$, $y=\mathcal{Y}_t^{\phi^m}(x)$ 
and $y'=\widetilde{\mathcal{Y}}_t^{\widetilde{\phi}^m}(x)$, 
\begin{align}\label{iteration}
 |\phi^{m+1}_t(x) - \widetilde{\phi}^{m+1}_t(x)|   \le 
\left(1 - { \tau\frac{1}{2}
\left(\frac{\mu L_{fa}}{\mu + L_{fa}}
+\nu
\right)}
\right) |\phi^{m}_t(x) - \widetilde{\phi}^{m}_t(x)| + \tau C_{\bar{b}}|\mathcal{Y}_t^{\phi^m}(x) -\widetilde{\mathcal{Y}}_t^{\widetilde{\phi}^m}(x) |.
\end{align}
Now, by
applying Proposition \ref{prop:Y_stability2} and 
using the definition of $\widetilde{B}$ given in \eqref{const:B2}, 
\begin{align*}
   & C_{\bar{b}}|\mathcal{Y}_t^{\phi^m}(x) -\widetilde{\mathcal{Y}}_t^{\widetilde{\phi}^m}(x) | \leq C_{\bar{b}} |\mathcal{Y}_t^{\phi^m}(x) -\mathcal{Y}_t^{\widetilde{\phi}^m}(x) | +C_{\bar{b}}|
   \mathcal{Y}_t^{\widetilde{\phi}^m}(x)
-\widetilde{\mathcal{Y}}_t^{\widetilde{\phi}^m}(x) | \\
    &\quad  \leq  C 
    (1+ {T}
+ T C_{\bar{b}} \widetilde{C}
)    
 e^{ T
\beta_{+}} 
(T
e^{ T
 \beta_{+}}+1) 
 \widetilde{B}
(1+|x|) |\phi^m  - \widetilde{\phi}^m|_0 
+ C_{\bar{b}} |\mathcal{Y}_t^{\widetilde{\phi}^m}(x)-\widetilde{\mathcal{Y}}_t^{\widetilde{\phi}^m}(x) |.
\end{align*}
Now let $c=1-\tau \mathfrak{D}$ with $ \mathfrak{D}>0$   defined   in 
\eqref{eq:constraction_condition2}. The condition \eqref{eq:constraction_condition2}
implies $c<1$, and
$\tau \le \frac{2}{\mu + L_{fa}}\wedge \frac{1}{\nu}$
implies that 
$c\ge 1 -  { \frac{ \tau}{2}
\left(\frac{\mu L_{fa}}{\mu + L_{fa}}
+\nu
\right)} \ge 0$.
Hence, 
{
for a.s.~$\om\in \Omega$,
\begin{align}
\label{eq:one_step_pointwise_stability}
   |\phi^{m+1}  - \widetilde{\phi}^{m+1}_\cdot(\cdot,\omega) |_0 \leq c |\phi^m  - \widetilde{\phi}^m_\cdot(\cdot,\omega) | _0+ \tau C_{\bar{b}} |
   \mathcal{Y}^{\widetilde{\phi}^m}_\cdot(\cdot,\omega)
    -\widetilde{\mathcal{Y}}^{\widetilde{\phi}^m}_\cdot(\cdot,\omega)  |_0,
\end{align}
which along with $\phi^0=\widetilde{\phi}^0$  implies that 
$
    |\phi^{m+1} - \widetilde{\phi}^{m+1}|_0 \leq 
    \sum_{j=0}^m c^{m-j} \tau C_{\bar{b}} | \mathcal{Y}^{\widetilde{\phi}^m}_\cdot(\cdot,\omega) -\widetilde{\mathcal{Y}}^{\widetilde{\phi}^m}_\cdot(\cdot,\omega) |_0.
$
The estimate  \eqref{eq:pointwise_stability} then follows from
Theorem
\ref{thm:convergence_stationary}
Item \ref{item:q_linear_phi},
and the estimate \eqref{eq:Lp_stability} follows by  taking the $L^p$-norm on both sides of  \eqref{eq:pointwise_stability}. 
}
 \qedhere

\end{proof}
\appendix

\section{Proofs of technical results}
\label{appendix:technical}

This section is devoted to the proofs of 
  Proposition \ref{prop:well_posed_phi}
  and Lemmas
\ref{forward:apriori_p}, 
\ref{lemma:H_gradient_estimate},
\ref{lem:MD}.

To prove Proposition \ref{prop:well_posed_phi},
we first establish a general well-posedness
result for BSDEs with non-Lipschitz and unbounded coefficients.
Although the result 
does not  follow directly from 
 \cite[Proposition 3.5]{briand2000bsdes}
due to the unboundedness of $A_t$
  (i.e., the condition (A1(3)) in \cite{briand2000bsdes} fails),
the  techniques there can be extended to the present setting.

\begin{Lemma}\label{lemma:adjoint_wellpose}
Let $T>0$,
$\kappa\in \sR$, $L\ge 0$, 
$\xi\in L^\infty(\cF_T;\sR^n)$,
let 
$A\in \cH^2(\sR^{n\t n})$
be such that 
 $y^\top A_t (\om) y\le \kappa |y|^2$
 for  all $(t,\om,y)\in [0,T]\t \Om\t \sR^n$,
 and let 
 $f:[0,T]\t \Om\t  \sR^{n\t d}\to \sR^n$ 
 satisfy
for all  $(t,\om)\in [0,T]\t \Om$ and  $z,z'\in \sR^{n\t d}$,
$(f_t(\cdot,z))_{t\in [0,T]}$ is progressively measurable,  
 $|f_t(\om,z)-f_t(\om,z')|\le L|z-z'|$
 and 
 $\sup_{(t,\om)\in [0,T]\t \Om} |f_t(\om,0)|<\infty$.
Then the following BSDE
\bb\label{eq:adjoint_unbounded}
\d Y_t
 = - (A_tY_t+f_t(\cdot, Z_t))  \, \mathrm{d}t+ Z_t \, \mathrm{d}W_t, \q t\in [0,T]; \quad Y_T = \xi,
\ee
admits a unique solution $(Y,Z)\in \mathcal{S}^2(\sR^n)
\t \mathcal{H}^2(\sR^{n\t d})$.
\end{Lemma}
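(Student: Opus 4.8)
The plan is to establish existence and uniqueness for the BSDE \eqref{eq:adjoint_unbounded} by reducing it, through a change of measure via an exponential weight, to a setting where the classical theory of Pardoux--Peng applies. The central difficulty is that the driver contains the term $A_t Y_t$ where $A_t$ is only bounded above in the quadratic-form sense ($y^\top A_t y \le \kappa|y|^2$) but may be unbounded in operator norm, so the driver is \emph{not} globally Lipschitz in $y$. This is exactly the obstruction flagged before the statement: the condition (A1(3)) of \cite{briand2000bsdes} fails. Nonetheless, the monotonicity in $y$ that $A_t$ provides is precisely what is needed to run an a priori estimate and a contraction argument.

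\textbf{Step 1 (a priori estimates).} First I would derive the basic energy estimate. Applying It\^o's formula to $e^{\beta t}|Y_t|^2$ for a suitable constant $\beta$ (to be chosen in terms of $\kappa$ and $L$), and using the one-sided bound $\langle Y_t, A_t Y_t\rangle \le \kappa|Y_t|^2$ together with the Lipschitz property $|f_t(\cdot,z)-f_t(\cdot,z')|\le L|z-z'|$ and the boundedness of $f_t(\cdot,0)$, one controls $\sE[\sup_t |Y_t|^2] + \|Z\|_{\cH^2}^2$ in terms of $\|\xi\|_\infty$ and $\sup_{t,\om}|f_t(\om,0)|$. The key point is that the monotone structure absorbs the bad term $A_tY_t$ into the decay coming from the $\beta$-weight, so no operator-norm bound on $A$ is needed for the estimate itself. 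This also gives, by the boundedness of $\xi$ and $f(\cdot,0)$, that $Y$ is in fact in $\cS^\infty$, i.e.\ $\sup_{t,\om}|Y_t|<\infty$; a comparison-type or truncation argument yields this $L^\infty$ bound.

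\textbf{Step 2 (reduction to a Lipschitz driver via localization).} This is where I expect the main obstacle. Having the a priori $L^\infty$ bound $\|Y\|_{\cS^\infty}\le R$ for some explicit $R$, I would truncate $A$: replace $A_t Y$ by $A_t \chi_R(Y)$ where $\chi_R$ is a smooth cutoff equal to the identity on $\{|y|\le R\}$ and bounded by $2R$, say. Even so $A_t$ itself is unbounded, so $A_t\chi_R(y)$ is bounded in $\om$ pointwise but the Lipschitz constant in $y$ still involves $|A_t(\om)|$, which is only in $\cH^2$. To handle this I would follow the strategy of \cite[Proposition 3.5]{briand2000bsdes} adapted to unbounded $A\in\cH^2$: set up a Picard/Banach fixed-point iteration in the weighted space $\cS^2\times\cH^2$ with norm $\sE[\int_0^T e^{\beta s}(\cdots)\,\d s]$, and show the map $(U,V)\mapsto(Y,Z)$ solving the linear BSDE with driver $A_t U_t + f_t(\cdot,V_t)$ is a contraction for $\beta$ large enough. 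The contraction constant depends on the monotonicity constant $\kappa$ and on $L$, \emph{not} on $\sup|A|$, because in estimating $|Y^1-Y^2|$ the difference of drivers is $A_t(U^1_t-U^2_t)$, and applying It\^o to $e^{\beta t}|Y^1_t-Y^2_t|^2$ one again uses $\langle \Delta Y, A_t \Delta U\rangle$ controlled via Young's inequality plus the quadratic-form bound on the diagonal term. The delicate bookkeeping is ensuring the off-diagonal cross terms remain integrable given only $A\in\cH^2$.

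\textbf{Step 3 (removing the truncation and uniqueness).} Once a solution $(Y,Z)$ to the truncated equation is obtained, the a priori bound $\|Y\|_{\cS^\infty}\le R$ from Step 1 shows $\chi_R(Y)=Y$, so $(Y,Z)$ solves the original BSDE \eqref{eq:adjoint_unbounded}. Uniqueness follows directly from the stability estimate in Step 2 (or equivalently from Lemma~\ref{BSDE:apriori}, whose monotonicity hypothesis is met here with $\gamma=L$ and $\mu=\kappa$): given two solutions with the same terminal data and driver, the difference satisfies a homogeneous linear BSDE, and the weighted energy estimate forces $Y^1=Y^2$ and $Z^1=Z^2$. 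I would close the argument by noting that $(Y,Z)\in\cS^2\times\cH^2$ is automatic from the a priori bounds, completing the proof of well-posedness.
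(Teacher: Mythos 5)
Your Steps 1 and 3 (the a priori $\cS^\infty$ bound via monotonicity, and uniqueness via the stability estimate of Lemma~\ref{BSDE:apriori}) are sound and match what the paper does. The genuine gap is in Step 2, and it is not a matter of "delicate bookkeeping": the Picard/contraction scheme you propose, in which one solves the BSDE with driver $A_t U_t + f_t(\cdot,V_t)$ for a frozen input $(U,V)$, cannot be made to work under the stated hypotheses. When you apply It\^o's formula to $e^{\beta t}|\Delta Y_t|^2$ for the difference of two iterates, the dangerous term is the \emph{cross} term $\la \Delta Y_t, A_t \Delta U_t\ra$ with $\Delta Y\neq \Delta U$. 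The hypothesis $y^\top A_t y\le \kappa|y|^2$ is a one-sided bound on the quadratic form, i.e.\ it only controls $\la y, A_t y\ra$ with the \emph{same} vector on both sides; it gives no control whatsoever on $\la y, A_t u\ra$ for $y\neq u$. Any estimate of the cross term (Young's inequality included) necessarily brings in the operator norm $|A_t|$, which is unbounded and only known to lie in $\cH^2$, so the resulting coefficient cannot be absorbed by taking $\beta$ large, and no contraction constant independent of $A$ exists. This is precisely why monotone (non-Lipschitz-in-$y$) drivers cannot be handled by freezing the $y$-argument: the iteration destroys the monotone structure. Your cutoff $A_t\chi_R(Y)$ does not help either, since, as you yourself note, the Lipschitz constant in $y$ still involves $|A_t(\om)|$.

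The paper's proof avoids this by truncating $A$ rather than $Y$: it sets $A^m_t \coloneqq \frac{\min(m,|A_t|)}{|A_t|}A_t$, which is bounded by $m$ \emph{and} inherits the quadratic-form bound (the truncation is a nonnegative scalar multiple of $A_t$). Each approximating BSDE has driver $A^m_t Y_t + f_t(\cdot,Z_t)$ — the monotone term still carries the unknown $Y$ itself — and is globally Lipschitz, hence solvable by classical theory. Monotonicity then yields a uniform $L^\infty$ bound on $(Y^m)_m$ (via \cite[Proposition 2.1]{briand2000bsdes}, after reducing to $\kappa=0$ by exponential time scaling), and the stability estimate of Lemma~\ref{BSDE:apriori} applied to two approximations gives a driver difference $(A^{m'}_t-A^m_t)Y^m_t$, bounded by $\|A^{m'}-A^m\|_{\cH^2}\sup_m\|Y^m\|_{\cS^\infty}$, which tends to zero by dominated convergence since $A\in\cH^2$. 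The sequence is therefore Cauchy in $\cS^2\t\cH^2$, and one passes to the limit in the equation. If you replace your Step 2 by this approximation-and-compactness argument (keeping your Steps 1 and 3 essentially as they are), the proof closes.
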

\begin{proof}[Proof of Lemma \ref{lemma:adjoint_wellpose}]
Throughout this proof, let $h_t(\om,y,z)=A_t(\om)y+f_t(\om,z)$ for all $(t,\om,y,z)\in [0,T]\t \Om\t \sR^n\t   \sR^{n\t d}$. 
Then 
for all  $(t,\om)\in [0,T]\t \Om$, $y,y'\in  \sR^n$ and $z,z'\in \sR^{n\t d}$,
$\la y-y',h_t(\om,y,z)-h_t(\om,y',z)\ra \le \kappa |y-y'|^2$ and 
$|h_t(\om,y,z)-h_t(\om,y,z')|\le L|z-z'|$.
Hence the a-priori estimate in Lemma \ref{BSDE:apriori} shows that \eqref{eq:adjoint_unbounded} admits at most one solution in the space $\mathcal{S}^2(\sR^n)
\t \mathcal{H}^2(\sR^{n\t d})$. 

To establish the existence of solutions, we construct a sequence of Lipschitz functions $(h^m)_{m\in \sN}$ approximating $h$ as in \cite[Proposition 3.5]{briand2000bsdes}. 
Without  loss of generality, we assume that $\kappa=0$, which in general can be achieved with an exponential time scaling of the solution.
For each $m\in \sN$, let $h:[0,T]\t \Om\t \sR^n \t \sR^{n\t d}\to \sR^n$ be such that 
$$
h^m_t(\om,y,z) \coloneqq A^m_t(\om)y+f_t(\om,z),
\q \fa (t,\om,y,z)\in [0,T]\t \Om\t \sR^n  \t \sR^{n\t d},
$$
where 
$A^m_t(\om)\coloneqq 
\frac{\min(m,|A_t(\om)|)}{|A_t(\om)|} A_t(\om)$. 
It is clear that for each $m\in \sN$, $h^m$ is uniformly Lipschitz continuous in $y$ and $z$. 
Hence by \cite[Theorem 4.3.1]{zhang2017backward}, there exist unique processes $(Y^m, Z^m)\in \cH^2(\sR^n)\t \cH^2(\sR^{n\t d})$ satisfying 
$$
\d Y_t
 = - h^m_t(Y_t,Z_t)  \, \mathrm{d}t+ Z_t \, \mathrm{d}W_t, \q t\in [0,T]; \quad Y_T = \xi.
$$
Observe
from $\kappa=0$
that 
for all 
$(t,\om,y, z )\in [0,T]\t \Om\t 
\sR^n\t \sR^{n\t d}$,
\begin{align}
    \begin{split}
        \la y, h^m_t(\om,y,z)\ra
        =
        \la y, A^m_t(\om)y\ra
        +\la y, f_t(\om,z)\ra 
        \le 
        |y|(C_f+L|z|),
    \end{split}
\end{align}
where $C_f=\sup_{(t,\om)\in [0,T]\t \Om}|f_t(\om,0)|$.
Hence by \cite[Proposition 2.1]{briand2000bsdes},
for $\d t\otimes \d \sP$ a.e., 
$$
\sup_{t\in [0,T]}|Y^m|^2\le 
\|\xi\|_{L^\infty}^2e^{\beta T}+\tfrac{C_f^2}{\beta}(e^{\beta T}-1),
\q \textnormal{with $\beta=1+L^2$.}
$$
We now denote by $C$  a generic constant independent of $m$. 
By  Lemma \ref{BSDE:apriori}, for all $m,m'\in \sN$,
\begin{align*}
\|Y^{m'}-Y^{m}\|_{\cS^2}^2+
\|Z^{m'}-Z^{m}\|_{\cH^2}^2
&\le
C 
\sE\bigg[ \bigg(\int_0^T 
|h^{m'}_t(\cdot, Y^m_t,Z^m_t)-
h^{m}_t(\cdot, Y^m_t,Z^m_t)
|\,\d t\bigg)^{2}\bigg]
\\
&\le 
C 
\sE\bigg[ \bigg(\int_0^T 
|A^{m'}_t -A^{m}_t| | Y^m_t|
\,\d t\bigg)^{2}\bigg]
\le C 
\|
A^{m'} -A^{m}\|_{\cH^2}^{2},
\end{align*}
where the last inequality follows from the uniform bound of $(Y^m)_{m\in \sN}$.
Lebesgue's dominated convergence theorem shows that 
$\|
A^{m'} -A^{m}\|_{\cH^2}$ tends to zero as $m,m'\to \infty$ with $m'\ge m$.
This  implies that  
$(Y^m,Z^m)_{m\in \sN}$ is a Cauchy sequence in $\cS^2(\sR^n)\t \cH^2(\sR^{n\t d})$, 
and hence  converges to some processes 
$(Y,Z)$ in $\cS^2(\sR^n)\t \cH^2(\sR^{n\t d})$.
Moreover, by
Fatou's lemma, there exits $C\ge 0$ such that 
$|Y|\le C$ for $\d t\otimes \d \sP$ a.e.

It remains to verify that $(Y,Z)$ is a solution to \eqref{eq:adjoint_unbounded}.
As $(Z^m)_{m\in \sN}$ converges to $Z$ in $\cH^2(\sR^{n\t d})$, 
 for all $t\in [0,T]$, $\lim_{m\to \infty}\|\int_t^T Z^m_s\,\d W_s- \int_t^T Z_s\,\d W_s\|_{L^2}=0$. Moreover, 
 for all $t\in [0,T]$,
 \begin{align*}
&
\bigg\|\int_0^T 
\big(
h^{m}_t(\cdot, Y^m_t,Z^m_t)-
h_t(\cdot, Y_t,Z_t)
\big)
\,\d t\bigg\|_{L^1}
\\
&\le
\bigg\|\int_0^T 
(|A^m_t-A_t||Y^m_t|
+|A_t|Y^m_t-Y|
)\,\d t
\bigg\|_{L^1}
+ 
 \bigg\|\int_0^T 
|
f_t(\cdot, Z^m_t)-
f_t(\cdot, Z_t)
|
\,\d t\bigg\|_{L^1},
\end{align*}
which converge to zero as $m\to \infty$, 
due to the  Cauchy-Schwarz inequality,
Lebesgue's dominated convergence theorem,
the  boundedness 
of $(Y^m)_{m\in \sN}$
and the convergence of $(Y^m,Z^m)_{m\in \sN}$.
This proves the desired existence result.
\end{proof}

Armed with Lemma \ref{lemma:adjoint_wellpose},
we prove the well-posedness of the iterates $(\phi^m)_{m\in \sN_0}$
for general stepsizes $\tau>0$.

\begin{proof}[Proof of Proposition \ref{prop:well_posed_phi}]
For any given $\phi^0\in \cV_\bA$ and $\tau>0$,
we prove the statement with an induction argument. 
 Due to the assumption $\phi^0\in\cV_\bA$, 
the statement clearly holds for $m=0$.

Suppose that  $\phi^m\in \cV_\bA$ for some $m\in \sN_0$.
Then
by (H.\ref{assum:pgm}),
$(t,x)\mapsto (b_t(x,\phi^m_t(x)),\sigma_t(x))$ is locally Lipschitz  and monotone in $x$.
Hence, 
for each $(t,x)\in [0,T]\t \sR^n$,
by   \cite[
Theorem 3.6, p.~58]{mao2007stochastic},  \eqref{sde_feedback} admits a unique solution
$X^{t,x,\phi^m}\in  \mathcal{S}^2(t,T;\sR^n)$. 

We then apply Lemma \ref{lemma:adjoint_wellpose}
for 
 the well-posedness of 
$(Y^{t,x,\phi^m}, Z^{t,x,\phi^m})$. 
By Lemma \ref{lemma:H_gradient_estimate}
and the fact that $\phi^m$ takes values in $\bA$,
$|
\partial_x H_t(x, \phi_t(x),y,z)
-
\partial_x H_t(x, \phi_t(x),y,z')
|\le L_{\sigma}|z-z'|$,
and 
\begin{align*}
 \begin{split}
 &\la y-y',
 \partial_x H_t(x, \phi^m_t(x),y,z)
 -\partial_x H_t(x, \phi^m_t(x),y',z)
\ra  
 \le (\kappa_{\hat{b}} - {
 \rho} +L_{\bar{b}})|y-y'|^2.
 \end{split}
 \end{align*}
Moreover, by \eqref{eq:f_x_lipschitz} and \eqref{eq:g_bound}, 
$\partial_x g(X^{t,x,\phi^m}_T)$ and 
$\p_x f_s(X^{t,x,\phi^m}_s, \phi^m_s(X^{t,x,\phi^m}_s))$
are uniformly bounded $\d t\otimes \d \sP$-a.e.,
 and 
$$
\|\p_x b(X^{t,x,\phi^m},\phi^m(X^{t,x,\phi^m}))\|_{\cH^2}
\le C(1+\|X^{t,x,\phi^m}\|_{\cS^2})<\infty,
$$
where we  used the linear growth of $\p_x\hat{b}$ and $\phi^m$ and the boundedness of $\p_x \bar{b}$. 
Hence by Lemma \ref{lemma:adjoint_wellpose},
\eqref{bsde_feedback} admits a unique solution in the space $ \mathcal{S}^2(t,T;\sR^n)
\t \mathcal{H}^2(t,T;\sR^{n\t d})$.
The fact that $(t,x)\mapsto Y^{t,x,\phi^m}_t$ can be identified as a deterministic function follows from \cite[Remark 2.1]{pardoux1998backward}.
Finally, similar to Propositions 
\ref{prop:Y_bound}
and 
\ref{prop:Y_stability}, one can prove that
 $(t,x)\mapsto Y^{t,x,\phi^m}_t$ 
is bounded
and  $x\mapsto Y^{t,x,\phi^m}_t$ is Lipschitz continuous uniformly in $t$.
As $\tau \ell:\sR^k\to \sR\cup\{\infty\}$ is proper, lower semicontinuous and convex, 
 $z\mapsto \operatorname{prox}_{\tau \ell}(z)$
is Lipschitz continuous and takes values in $\bA$.
Hence, one can easily deduce from 
(H.\ref{assum:pgm}) that
 $\phi^{m+1}\in \cV_\bA$, which completes the induction argument. 
\end{proof}

\begin{proof}[Proof of Lemma \ref{forward:apriori_p}]
Throughout this proof, 
let $C$ be a generic constant depending only on $p$,
let $t\in [0,T]$, $x_1,x_2\in  \sR^n$
and write $X_s^{1}=X_s^{t,x_1,1}$,
$X_s^{2}=X_s^{t,x_2,2}$
and $\Delta X_s=X_s^{1}-X_s^{2}$. 
By applying It\^{o}'s formula
to 
$(|\Delta X_s|^p)_{s\in [t,T]}$, for $  t\le s\le T$, 
\begin{align}
\begin{split}
  |\Delta X_s|^p & \leq |x_1 - x_2|^p + \int_{t}^s  \big(
 p |\Delta X_r|^{p-2}   \langle \Delta X_r,  b^1_r(X^{1}_r)-{b}^2_r(X^{2}_r)  \rangle  
 \\
 &\q 
+\tfrac{p(p-1)}{2} |\Delta X_r|^{p-2}  |\sigma^1_r(X_r^{1})-\sigma^2_r({X}_r^{2})|^2\big)
 \, \d r 
 \\
 &\q 
  + p \int_t^s  |\Delta X_r|^{p-2}  \langle \Delta X_r,
 \sigma^1_r(X_r^{1})-\sigma^2_r(X^{2}_r)\ra \,\d W_r,
\label{eq:Delta_X2}
\end{split}
\end{align}
which along with  the assumptions of  $b^1$ and $\sigma^1$ gives 
\begin{align}
|\Delta X_s|^p 
& \le |x_1-x_2|^p+
  \int_{t}^s \big(
  (p\mu_1+
  p(p-1)\nu^2_1) |\Delta X_r|^p
    \notag \\
 & \quad + p |\Delta X_r|^{p-1} |  b_r^{1}(X^{2}_r)-b^{2}_r(X^{2}_r) | 
+ p(p-1)
|\Delta X_r|^{p-2}|\sigma^1_r(X_r^{2})-\sigma^2_r({X}_r^{2})|^2
\big)
   \, \d r  
 \nb \\
& \q + p \bigg|\int_t^s  |\Delta X_r|^{p-2} \langle \Delta X_r,
( \sigma^1_r(X_r^{1})-\sigma^2_r(X^{2}_r) )\,\d W_r\ra\bigg|.
\label{eq:Delta_X}
\end{align}
Observe that by the Burkholder-Davis-Gundy inequality  (see \cite[Theorem 2.4.1]{zhang2017backward})
and Young's inequality,  for all $\varepsilon > 0$,
\begin{align*}
& p \mathbb{E} \left[ \sup_{s \in [t,T]} \bigg|\int_t^s  |\Delta X_r|^{p-2} \langle \Delta X_r,
( \sigma^1_r(X_r^{1})-\sigma^2_r(X^{2}_r) )\,\d W_r\ra\bigg| \right]  \\
& \leq C p \mathbb{E} \left[ \bigg(\int_t^T  |\Delta X_r|^{2p-4} |\Delta X_r|^2 | \sigma^1_r(X_r^{1})-\sigma^2_r(X^{2}_r)|^2 \,\d r \bigg)^{1/2} \right] \\
& \leq C p \mathbb{E} \left[ \sup_{s \in [t,T]} |\Delta X_s|^{p/2} \bigg(\int_t^T  |\Delta X_r|^{p-2} | \sigma^1_r(X_r^{1})-\sigma^2_r(X^{2}_r)|^2 \,\d r \bigg)^{1/2} \right] \\
& \leq \varepsilon \|\Delta X\|^p_{\cS^p } +  C \nu_1^2\varepsilon^{-1} {p^2}
\mathbb{E}  \left[ \int_t^T  |\Delta X_r|^{p} \,\d r  \right] + C\varepsilon^{-1} 
{p^2} \int_t^T  |\Delta X_r|^{p-2} | \sigma^1_r(X_r^{2})-\sigma^2_r(X^{2}_r)|^2 \,\d r.
\end{align*}
Hence, 
by taking supremum over $s\in [t,T]$ and  expectations on both sides of \eqref{eq:Delta_X}, 
\begin{align*}
 &   \|\Delta X\|^p_{\cS^p } \\
 & \leq |x_1-x_2|^p+
    (p\mu_1+p(p-1)\nu^2_1)\mathbb{E} \left[ \int_{t}^T 
 |\Delta X_r|^p \,\d r \right] + \varepsilon \|\Delta X\|^p_{\cS^p } +  C \nu_1^2 \varepsilon^{-1} p^2\mathbb{E} \left[ \int_t^T  |\Delta X_r|^{p} \,\d r  \right] \\
 & \quad +  \mathbb{E} \left[  \int_t^T \big( p |\Delta X_r|^{p-1} |  b_r^{1}(X^{2}_r)-b^{2}_r(X^{2}_r) | 
+ (p(p-1) + C \varepsilon^{-1} p^2) |\Delta X_r|^{p-2}|\sigma^1_r(X_r^{2})-\sigma^2_r({X}_r^{2})|^2 \big) \, \mathrm{d}r \right].
\end{align*}
Then by Young's inequality, 
\allowdisplaybreaks
\begin{align*}
 & \mathbb{E} \left[  \int_t^T \big( p |\Delta X_r|^{p-1} |  b_r^{1}(X^{2}_r)-b^{2}_r(X^{2}_r) | 
+ (p(p-1) + C \varepsilon^{-1} p^2) |\Delta X_r|^{p-2}|\sigma^1_r(X_r^{2})-\sigma^2_r({X}_r^{2})|^2 \big) \, \mathrm{d}r \right] \\
& \leq \mathbb{E} \left[ \sup_{s \in [t,T]} |\Delta X_s|^{p-1} \int_t^T p  |b_r^{1}(X^{2}_r)-b^{2}_r(X^{2}_r) |  \, \mathrm{d}r \right]
 \\
& \quad +  \mathbb{E} \left[ \sup_{s \in [t,T]} |\Delta X_s|^{p-2} \int_t^T (p(p-1) + C \varepsilon^{-1} p^2)|\sigma^1_r(X_r^{2})-\sigma^2_r({X}_r^{2})|^2 \, \mathrm{d}r \right] \\
& \leq \varepsilon \|\Delta X\|^p_{\cS^p } + \varepsilon^{-1} T^{p/2} C_{(p)} \mathbb{E} \left[ \left(\int_t^T |b_r^{1}(X^{2}_r)-b^{2}_r(X^{2}_r) |^2 \, \mathrm{d}r  \right)^{p/2}  \right] \\
& \quad  + \varepsilon^{-1} C_{(p)} (p(p-1) + C \varepsilon^{-1} p^2)^{p/2}  \mathbb{E} \left[ \left(\int_t^T |\sigma_r^{1}(X^{2}_r)-\sigma^{2}_r(X^{2}_r) |^2  \, \mathrm{d}r \right)^{p/2}   \right],
\end{align*}
which gives us that
\begin{align*}
   \|\Delta X\|^p_{\cS^p }  &\leq |x_1-x_2|^p+
    (p\mu_1+p(p-1) \nu^2_1) \mathbb{E} \left[ \int_{t}^T 
 |\Delta X_r|^p \,\d r \right] + \varepsilon \|\Delta X\|^p_{\cS^p }\\
 & \quad  +  C\nu_1^2\varepsilon^{-1} p^2 \mathbb{E} \left[ \int_{t}^T 
 |\Delta X_r|^p \,\d r \right] + \varepsilon^{-1} T^{p/2} C_{(p)} \| b^1({X}^{t,x_2,2})-{b}^2({X}^{t,x_2,2}) \|^p_{\cH^p } \\
 & \quad +  \varepsilon^{-1}
 C_{(p)}(p(p-1) + C \varepsilon^{-1} p^2)^{p/2}  \| \sigma^1({X}^{t,x_2,2})-\sigma^2({X}^{t,x_2,2}) \|^p_{\cH^p }. 
\end{align*}
The desired estimate follows by choosing  $\eps=1/2$,
by applying Gr\"{o}nwall's inequality and by taking the $p$-th root on both sides.

We now prove the inequality 
\eqref{eq:X_b_difference} by assuming that
$\sigma^1 \equiv \sigma^2$
and $x^1=x^2=x$.
Let 
 $\beta \coloneqq 2\mu_1 + \nu_1^2 +1$.
 Applying It\^{o}'s formula to $(e^{-s \beta}|\Delta X_s|^2)_{s\in [t,T]}$ and taking expectations on both sides yield that
\begin{align}
\begin{split}
&\sE[e^{-T\beta}
 |\Delta X_T|^2] 
 \\
 & \leq 
 \sE\bigg[
 \int_{t}^T  \Big( e^{-r \beta}
 (  - \beta |\Delta X_r|^2 +
 2\langle \Delta X_r,  b^1_r(X^{1}_r)-{b}^2_r(X^{2}_r)  \rangle+ |\sigma^{1}(X_r^1) - \sigma^{1}(X_r^2) |^2 \Big)
 \d r  \bigg] 
 \\
 & \leq 
 \sE\bigg[
 \int_{t}^T  \big( e^{-r \beta}
 (  - \beta |\Delta X_r|^2+
 2\mu_1|\Delta X_r|^2
 +
 2| \Delta X_r| | b^1_r(X^{2}_r)-{b}^2_r(X^{2}_r)|  
   + \nu_1^2|\Delta X_r |^2 \big)
 \d r  \bigg] 
 \\
 & 
  \leq \mathbb{E} \left[  \int_{t}^{T}  e^{-r  \beta} |b_r^{1}(X^{2}_{r})-b^{2}_{r}(X^{2}_{r})|^2 \,\mathrm{d}r \right],
\end{split}
\end{align}
where the second inequality follows from the assumptions of $b^1$ and $\sigma^1$,
and the last inequality 
follows from the Cauchy--Schwarz inequality 
and the fact that $\b=2\mu_1+\nu_1^2+1$.
\end{proof}

\begin{proof}[Proof of Lemma \ref{lemma:H_gradient_estimate}]
By  
\eqref{eq:hat_b_lipschitz}
and \eqref{eq:bar_b_lipschitz},
\begin{align*}
\begin{split}
&\la y-y',
\partial_x H_t(x, a,y,z)
-\partial_x H_t(x, a,y',z)\ra 
=
\la y-y',
\partial_x b_t(x, a)^\top (y-y') -{
\rho(y-y')}
\ra 
\\
&=
\la y-y',
\big(\partial_x 
\hat{b}_t(x)
+\partial_x \bar{b}_t(x)
a
\big)^\top (y-y')-{
\rho(y-y')}
\ra 
\le (\kappa_{\hat{b}} -{
\rho}+L_{\bar{b}})|y-y'|^2.
\end{split}
\end{align*}
Moreover, 
by \eqref{eq:hat_b_lipschitz},
\eqref{eq:bar_b_lipschitz}
and 
\eqref{eq:f_x_lipschitz},
\begin{align*}
&|\partial_x H_t(x, a,y,z)
-\partial_x H_t(x', a',y,z')|
\\
&\le 
|\partial_x b_t(x, a)-\partial_x b_t(x', a')|| y| + |\tr(\partial_x \sigma_t(x)^\top z) - \tr(\partial_x \sigma_t(x')^\top z')|
+|\partial_x f_t(x, a)-\partial_x f_t(x', a')|
\\
&\le 
\big(L_{\hat{b}}|x-x'|+
L_{\bar{b}}(|x-x'|+|a-a'|)\big)| y| + L_{\sigma}|x-x'| |z|+L_{\sigma}|z-z'|
+L_{fx}(|x-x'|+|a-a'|).
\end{align*}
This completes the proof of the desired estimates.
\end{proof}

\begin{proof}[Proof of Lemma \ref{lem:MD}]
Throughout this proof,
for each $p>1$ and Euclidean space $E$, we denote by 
$\sD^{1,p}(E )$ and $\sL_{1,p} (E)$  
the spaces of Malliavin differentiable random variables and processes;
see \cite[Appendix A.2]{lionnet2015time} for details.

By Corollary 3.5 and Remark 3.4 in \cite{imkeller2019differentiability},  for any $(t,x) \in [0,T] \times \mathbb{R}^n$,  $\phi \in \cV_\bA$,  $s \in [t,T]$
and $p>1$,
$X^{t,x,\phi}_s \in \sD^{1,p}(\sR^n)$, and 
the derivative
$DX^{t,x,\phi}=(D X^{t,x,\phi,(1)},\ldots, D X^{t,x,\phi,(d)})$,
which is $\sR^{n\t d}$-valued, satisfies
for $t\le r< s\le T$ that $D_s X^{t,x,\phi}_r=0$ and 
for  $t \le s\le  r \le T$ that
\begin{align}\l{eq:X_M_derivative}
D_s X^{t,x,\phi}_r&=
 \sigma_s (X^{t,x,\phi}_s)+
\int_s^r \p b_u D_s X^{t,x,\phi}_u \, \d u +
\sum_{k=1}^d
\int_s^r \partial_x \sigma^{(k)}_uD_s X^{t,x,\phi}_u\, \d W^{(k)}_{u}
\end{align}
for some adapted process $\partial b:[t,T]\t \Om\to \sR^{n \t n}$.

We proceed to establishing the Malliavin differentiability of $(Y^{t,x,\phi},Z^{t,x,\phi})$.
By \cite[Theorem 5.1.3]{zhang2017backward},
 for all $s\in [t,T]$,
$Y^{t,x,\phi}_s = u_s(X^{t,x,\phi}_s)$, with the function $(t,x)\mapsto u_t(x) \coloneqq Y_t^{t,x,\phi}$.
Thus  the Lipschitz continuity of $x \mapsto Y_t^{t,x,\phi}$, the chain rule and   $X^{t,x,\phi}\in  \mathbb{L}_{1,p}(\sR^n)$ imply that $Y^{t,x,\phi}\in \mathbb{L}_{1,p}(\sR^n)$ for all $p>1$.

For the Malliavin differentiability of $Z^{t,x,\phi}$, 
for each $(t,x)\in [0,T]\t\sR^n$ and $\phi\in \cV_\bA$,
consider  the following BSDE: 
\begin{equation}\label{aux:BSDE2}
    U^{t,x,\phi}_s = \partial_x g(X^{t,x,\phi}_T) + \int_{s}^{T} \hat{f}_r(V^{t,x,\phi}_r) \, \mathrm{d}r - \int_{s}^{T} V_r^{t,x,\phi} \, \mathrm{d}W_r,
\end{equation}
where $\hat{f}: [t,T] \times \Omega \times \mathbb{R}^{n \times d} \to \mathbb{R}^{n}$ is defined  by 
\begin{equation*}
    \hat{f}_r(v) \coloneqq \partial_x H_r(X^{t,x,\phi}_r(\omega), \phi_r(X^{t,x,\phi}_r(\omega)),Y^{t,x,\phi}_r(\omega),v),
    \q 
    \fa (r,\om, v)\in [t,T] \times \Omega \times \mathbb{R}^{n \times d}.
\end{equation*}
The Lipschitz continuity of $\mathbb{R}^{n \times d} \ni v \mapsto \hat{f}_r(v)$ 
imply that
\eqref{aux:BSDE2} admits the unique solution  $( U^{t,x,\phi}, V^{t,x,\phi})$, 
which along with  the uniqueness of solutions to \eqref{bsde_feedback} shows that  $( U^{t,x,\phi}, V^{t,x,\phi} ) = ( Y^{t,x,\phi}, Z^{t,x,\phi})$.
Observe
from (H.\ref{assum:pgm}),
$\phi\in \cV_\bA$ and 
$X^{t,x,\phi},Y^{t,x,\phi}\in
\mathbb{L}_{1,p}$ for all $p>1$
that for all $v\in \mathbb{R}^{n \times d}$,
$$
 \partial_x H_{\cdot}(X^{t,x,\phi}_{\cdot}, \phi_{\cdot}(X^{t,x,\phi}_{\cdot}),Y^{t,x,\phi}_{\cdot},v) \in \mathbb{L}_{1,2}.
$$
Hence applying 
\cite[Proposition 5.3]{el1997backward} to \eqref{aux:BSDE2}
implies $ (U^{t,x,\phi}, V^{t,x,\phi})  =   (Y^{t,x,\phi}, Z^{t,x,\phi})$ are Malliavin differentiable. 
By further applying  
\cite[Proposition 5.3]{el1997backward}
to \eqref{bsde_feedback}, 
$Z_s^{t,x,\phi} = D_s Y_s^{t,x,\phi}$ for $\d t \otimes \d \sP$-a.e.

Recall the representation 
$Y^{t,x,\phi}_s = u_s(X^{t,x,\phi}_s)$ 
with  $ u_t(x) = Y_t^{t,x,\phi}$.
By the chain rule and 
\eqref{eq:X_M_derivative},
$Z_s^{t,x,\phi} = D_s Y_s^{t,x,\phi}  = \p u_s D_s X^{t,x,\phi}_s
=
 \p u_s  \sigma_s(X^{t,x,\phi}_s)
$ for a uniformly bounded process $\p u$.
In fact, $\p u$ is the weak limit  of the sequences 
$(\p_x  u^\eps(X^{t,x,\phi}))_{\eps>0}$
in $\cH^2(t,T;\sR^{n\t n})$,
where $(u^\eps)_{\eps>0}$ 
is a standard mollification of $u$.
Then by Proposition \ref{prop:Y_stability},
$|\p u|\le L_Y([\phi]_1)$,
which 
along with \eqref{eq:sigma}
leads to the desired result.
\end{proof}

\begin{Lemma}\label{lemma:m_ab}
Let $\mathfrak{m}_{(\alpha,\beta)}$ be defined as in \eqref{eq:Y_difference_phi_constant}, where $\alpha$ and $\beta$ are given by  \eqref{eq:L_Y_phi} and \eqref{eq:Y_difference_phi_constant},
 respectively. Then  $\mathfrak{m}_{(\alpha,\beta)} \to 0$, as $T \to 0$, or $\rho \to \infty$ or $\kappa_{\hat{b}} \to - \infty$.
\end{Lemma}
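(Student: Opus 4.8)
The plan is to evaluate the inner integral in closed form, reduce $\mathfrak{m}_{(\alpha,\beta)}$ to a supremum of an elementary scalar function of a single variable, and then read off the three limits from two clean upper bounds. First I would substitute $r = T-t$ and compute $\int_t^T e^{(T-s)\beta}\,\d s = \int_0^{T-t} e^{u\beta}\,\d u = \frac{e^{(T-t)\beta}-1}{\beta}$ (interpreted as $T-t$ when $\beta=0$), so that
\begin{equation*}
\mathfrak{m}_{(\alpha,\beta)} = \sup_{r\in[0,T]} e^{2\alpha r}\,\frac{e^{r\beta}-1}{\beta}.
\end{equation*}
Throughout I treat every constant appearing in (H.\ref{assum:pgm}) other than $T$, $\rho$, $\kappa_{\hat{b}}$ --- in particular $\max\{[\phi]_1,[\phi']_1\}$ --- as fixed, so that $\alpha = \kappa_{\hat{b}}-\rho+L_{\bar{b}}+L_\sigma^2$ and $\beta = 2\kappa_{\hat{b}}+2L_{\bar{b}}\max\{[\phi]_1,[\phi']_1\}+C$ are affine in the three varying parameters.

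The key elementary estimate is $\frac{e^{r\beta}-1}{\beta} = \int_0^r e^{u\beta}\,\d u \le r\,e^{r\beta_+}$ for $r\ge 0$, valid for every sign of $\beta$, where $\beta_+ = \max(0,\beta)$. Combining this with $e^{2\alpha r}\le e^{2\alpha_+ r}$ yields the universal bound $\mathfrak{m}_{(\alpha,\beta)} \le T\,e^{T(2\alpha_+ +\beta_+)}$, since $r\mapsto r\,e^{cr}$ is nondecreasing on $[0,T]$ for $c\ge0$. For the case $T\to 0$ this bound settles the claim immediately, as $\alpha$ and $\beta$ stay fixed.

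For the remaining two limits I would instead exploit the genuine exponential decay coming from $\alpha\to-\infty$. Writing $\frac{e^{r\beta}-1}{\beta}\le r\,e^{r\beta_+}\le e^{T\beta_+}\,r$ for $r\in[0,T]$ and using $\sup_{r\ge0} r\,e^{2\alpha r} = \frac{1}{2e|\alpha|}$ whenever $\alpha<0$, I obtain
\begin{equation*}
\mathfrak{m}_{(\alpha,\beta)} \le \frac{e^{T\beta_+}}{2e\,|\alpha|},\qquad \alpha<0.
\end{equation*}
As $\rho\to\infty$ (with $T$ and $\kappa_{\hat{b}}$ fixed) one has $\alpha\to-\infty$ while $\beta$ is independent of $\rho$, so the right-hand side tends to $0$; as $\kappa_{\hat{b}}\to-\infty$ one has both $\alpha\to-\infty$ and $\beta\to-\infty$, so eventually $\beta_+=0$ and the bound reduces to $\frac{1}{2e|\alpha|}\to 0$.

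The only point requiring a little care is that in the $\rho\to\infty$ case mere boundedness of $e^{2\alpha r}$ on $[0,T]$ is \emph{not} enough --- one must use that the product $r\,e^{2\alpha r}$ is maximised at the interior point $r=1/(2|\alpha|)$, which is precisely what produces the decaying factor $1/|\alpha|$. This single one-variable optimisation is the only slightly delicate step; everything else is a routine change of variables and the two sign-insensitive exponential inequalities above.
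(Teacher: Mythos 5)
Your proof is correct. It starts from the same closed-form reduction as the paper, namely $\mathfrak{m}_{(\alpha,\beta)}=\sup_{r\in[0,T]}e^{2\alpha r}\frac{e^{r\beta}-1}{\beta}$, and your $T\to 0$ bound is essentially the paper's, but you handle the other two limits by a genuinely different and more elementary route. For $\rho\to\infty$ the paper computes the exact interior maximizer $t^\star=\frac{1}{\beta}\ln\big(\frac{2\alpha}{2\alpha+\beta}\big)$, evaluates the supremum in closed form, and then invokes L'Hospital's rule to identify the limit; you instead discard a factor via $\frac{e^{r\beta}-1}{\beta}\le e^{T\beta_+}r$ and optimize only the simple product $r e^{2\alpha r}$, yielding the bound $\mathfrak{m}_{(\alpha,\beta)}\le \frac{e^{T\beta_+}}{2e|\alpha|}$ for $\alpha<0$. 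This avoids L'Hospital entirely, gives an explicit quantitative decay rate in $|\alpha|$ (hence $O(1/\rho)$), sidesteps the paper's implicit assumption that the critical point lies in $[0,T]$, and moreover covers the case $\kappa_{\hat b}\to-\infty$ with the very same bound (there you use $\alpha\to-\infty$, whereas the paper uses the decay of $\frac{e^{T\beta}-1}{\beta}$ as $\beta\to-\infty$). What the paper's computation buys in exchange is the sharp asymptotic form of $\mathfrak{m}_{(\alpha,\beta)}$ itself rather than just an upper bound; your argument trades that sharpness for simplicity and a unified treatment of two of the three limits. Your closing remark is also on point: bounding $e^{2\alpha r}\le 1$ alone would leave a $\rho$-independent bound, so the interior maximization of $r e^{2\alpha r}$ is exactly where the decay is generated.
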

\begin{proof}
We start by fixing $T>0$ and $\kappa_{\hat{b}}$ and considering $\rho \to \infty$. Then
 $\beta$ is fixed and  $\alpha$ tends to $- \infty$.
By definition of $\mathfrak{m}_{(\alpha,\beta)}$, 
\begin{equation*}
    \mathfrak{m}_{(\alpha,\beta)} = \sup_{t \in [0,T]} e^{2\alpha(T-t)} \frac{e^{\beta(T-t)} -1}{\beta} = \sup_{t \in [0,T]} \frac{e^{(2\alpha + \beta)t} - e^{2\alpha t}}{\beta}.
\end{equation*}
Assume without loss of generality that $\alpha$ is sufficiently negative such that $2\alpha+\beta<0$.
A straightforward computation shows that the supremum is attained for $t^{\star}=  \frac{1}{\beta}\ln \left( \frac{2\alpha}{2\alpha + \beta} \right)$, and hence
\begin{equation*}
    \mathfrak{m}_{(\alpha,\beta)} =e^{2\alpha \frac{\ln \left( \frac{2\alpha}{2\alpha + \beta} \right)}{\beta}} \frac{e^{ {\ln \left( \frac{2\alpha}{2\alpha + \beta} \right)}} - 1}{\beta}
    =e^{2\alpha \frac{\ln \left( \frac{2\alpha}{2\alpha + \beta} \right)}{\beta}} \frac{ \frac{2\alpha}{2\alpha + \beta}  - 1}{\beta}
    =-e^{ \frac{2\alpha}{\beta}\ln \left( \frac{2\alpha}{2\alpha + \beta} \right)} { \frac{1}{2\alpha + \beta} }.
\end{equation*}
By  L'Hospital's rule,
$\lim_{\alpha\to -\infty} \frac{2\alpha }{\beta} \ln \left( \frac{2\alpha}{2\alpha + \beta} \right)= -1$,
which shows that 
$\lim_{\rho\to \infty}\mathfrak{m}_{(\alpha,\beta)} = 0$. 

Now we fix $\rho$ and send  $\kappa_{\hat{b}} \to -\infty$ (i.e., $\beta \to - \infty$) or $T\to 0$. Observe that 
\begin{align*}
     \mathfrak{m}_{(\alpha,\beta)} = \sup_{t \in [0,T]} e^{2\alpha(T-t)}  \int_{t}^{T} e^{(T-s) \beta} \, \mathrm{d}s \leq   \sup_{t \in [0,T]} e^{2\alpha(T-t)}  \int_{0}^{T} e^{(T-s) \beta} \, \mathrm{d}s \leq e^{2\alpha_{+} T} \frac{e^{T \beta} -1}{\beta}, 
\end{align*}
from which one can easily deduce that $\lim_{T\to 0}\mathfrak{m}_{(\alpha,\beta)}=\lim_{\kappa_{\hat{b}} \to -\infty }\mathfrak{m}_{(\alpha,\beta)}=0$. 
\end{proof}


\section*{Acknowledgements}
\noindent
Wolfgang Stockinger is supported by a special Upper Austrian Government grant.
\bibliographystyle{siam}
\bibliography{pgm.bib}

\end{document}